\newcommand{\C}[0]{\mathds{C}}
\newcommand{\R}[0]{\mathds{R}}
\newcommand{\Z}[0]{\mathds{Z}}
\newcommand{\N}[0]{\mathds{N}}
\newcommand{\T}[0]{\mathds{T}}
\newcommand{\pv}[0]{\mathrm{p.v.}}
\newcommand{\transpose}[1]{{}^{t}#1}
\DeclareMathOperator{\diag}{\mathrm{diag}}
\DeclareMathOperator{\sgn}{\mathrm{sgn}}
\DeclareMathOperator{\imag}{\mathrm{Im}}
\DeclareMathOperator{\supp}{\mathrm{supp}}
\newcommand{\Schwartz}[0]{\mathcal{S}}
\newcommand{\species}[0]{\mathsf{s}} 
\newcommand{\ph}[0]{\mathrm{ph}}
\newcommand{\eps}[0]{\varepsilon}
\newcommand{\ff}[0]{\mathsf{f}}
\newcommand{\Ef}[0]{\mathsf{E}}
\newcommand{\Bf}[0]{\mathsf{B}}
\theoremstyle{plain}
\newtheorem{theorem}{Theorem}[section]
\newtheorem{corollary}[theorem]{Corollary}
\newtheorem{lemma}[theorem]{Lemma}
\newtheorem{proposition}[theorem]{Proposition}
\theoremstyle{definition}
\newtheorem{definition}{Definition}
\theoremstyle{remark}
\newtheorem{remark}{Remark}
\begin{document}

%

\title[tempered-in-time response of a plasma]{
  Uniqueness of the Cauchy datum for the tempered-in-time response and
  conductivity operator of a plasma
}

\author[O. Lafitte and O. Maj]{Olivier Lafitte${}^{1,2}$ \and Omar Maj${}^{3,4}$}

\address{${}^1$ 
  Universit\'e Sorbonne Paris Nord, LAGA, CNRS, UMR 7539, F-93430, 
  Villetaneuse, France.}
\address{${}^2$
  Centre de Recherches Math\'ematiques, IRL CRM-CNRS 3457,
  Universit\'e de Montr\'eal, Pavillon Andr\'e-Aisenstadt,
  2920 Chemin de la tour, Montr\'eal (Qu\'ebec) H3T 1J4.
} 
\address{${}^3$ 
  Max Planck Institute for Plasma Physics,
  Boltzmannstr.~2, D~85748 Garching, Germany.
}
\address{${}^{4}$ Technische Universit\"at M\"unchen, Zentrum Mathematik,
  Boltzmannstra\ss{}e 3, 85747 Garching, Germany.
}

\begin{abstract}
  We study the linear Vlasov equation with a given electric field
  $E \in \Schwartz$, where $\Schwartz$ is the space of Schwartz functions.
  The associated damped partial differential equation has a unique tempered
  solution, which fixes the needed Cauchy datum. This tempered solution then
  converges to the causal solution of the linear Vlasov equation when the
  damping parameter goes to zero. This result allows us to define the plasma
  conductivity operator $\sigma$, which gives the current density
  $j = \sigma (E)$ induced by the electric field $E$. We prove that $\sigma$ is
  continuous from $\Schwartz$ to its dual $\Schwartz^\prime$. We can treat
  rigorously the case of uniform non-magnetized non-relativistic plasma
  (linear Landau damping) and the case of uniform magnetized relativistic plasma 
  (cyclotron damping). In both cases, we demonstrate that the main part of the
  conductivity operator is a pseudo-differential operator and we give its
  expression rigorously. This matches the formal results widely used in the
  theoretical physics community. 
\end{abstract}

\maketitle

\tableofcontents

%
\section{Introduction}
\label{sec:intro}

A plasma is a collection of a sufficiently large number of electrically charged
particles of various species (electrons, protons, and ions of different 
elements), subject to electromagnetic fields. In kinetic theory, the
configuration of a plasma is specified by a family of functions  
$\ff_\species : \R \times \R^3 \times \R^3 \to \R_+$, labeled by the index of
particle species $\species$ and defined so that $\ff_\species (t, x, p)$
gives the density of particles of the species $\species$ at the time $t$,
position $x$ and relativistic momentum $p$.

The equations governing the evolution of the distribution functions
$\{\ff_\species\}_\species$, together with the electric field
$\Ef : \R \times \R^3 \to \R^3$ and the magnetic field
$\Bf : \R \times \R^3 \to \R^3$,
are given by the relativistic Vlasov-Maxwell-Landau system, which writes 
\begin{equation}
  \label{eq:RVMS}
  \left\{
  \begin{aligned}
    &\partial_t \ff_\species + v_\species \cdot \nabla_x \ff_\species + q_\species 
    \big(\Ef + v_\species \times \Bf / c \big) \cdot 
    \nabla_p \ff_\species = C_\species(\{\ff_{\species'}\}_{\species'}),  \\
    &\partial_t \Ef - c\nabla \times \Bf = -4 \pi  
    \sum_\species q_\species \int_{\R^3} v_\species(p) \ff_\species (t,x,p)dp, && \\
    &\partial_t \Bf = -c \nabla \times \Ef, && \\
    &\nabla \cdot \Bf = 0, && \\
    &\nabla \cdot \Ef = 4\pi  
    \sum_\species q_\species \int_{\R^3} \ff_\species (t,x,p)dp, && 
  \end{aligned} \right.
\end{equation}
where the relativistic velocity $v_\species$ is defined by
$v_\species(p) = p / [m_\species \gamma_\species(p)]$, with
$\gamma_\species (p) = \big( 1 + p^2 / m_\species^2 c^2 \big)^{1/2}$,
$C_\species$ is the relativistic Landau collision operator
\cite{Beliaev1956, Braams1989} and depends on $\{\ff_{\species'}\}_{\species'}$,
$c$ is the speed of light, $m_\species$ is the mass of particles of the species
$\species$, and $q_\species$ is their electric charge (c.g.s. units are used
throughout the paper). 

However, a variety of reduced models are also in use for modeling plasmas and
gases in special cases. For instance, at moderate energies the non-relativistic
version is used, which follows from~(\ref{eq:RVMS}) by setting
$\gamma_\species(p) = 1$, so that $v_\species(p) = p/m_\species$, and by replacing
$C_\species$ with the non-relativistic Landau collision operator. When the
collision operator $C_\species$ can be neglected, one recovers the
Vlasov-Maxwell system both in the relativistic and non-relativistic
versions. The Vlasov-Maxwell system can be further reduced, when all effects of 
the magnetic field can be neglected; then Maxwell's equations are replaced by
the Poisson equation for the electrostatic potential $\phi$, $\Ef =-\nabla \phi$,
and the Vlasov-Maxwell system reduces to the Vlasov-Poisson system.  
For electrically neutral particles (a gas), $q_\species=0$, the electromagnetic
part of the system can be dropped and the collision operator $C_\species$ is
given by the Boltzmann operator. This gives the Boltzmann equation
\cite{Cercignani1975}. 
One can also replace the collision operator by simpler models,
such as the BGK (Bhatnagar, Gross and Krook \cite{Bhatnagar1954}) operator,
leading to the BGK kinetic model \cite{Cercignani1975}.  

The cases mentioned above are just some of the most common kinetic models for
plasmas and gases but other ``combinations'' of self-consistent forces and
collision operators are also considered. The literature on kinetic models is
vast and has many applications. We shall not attempt to give a review here. 

In this paper we are particularly interested in applications to the study of
high-frequency electromagnetic waves in high-temperature plasmas. For such problems,
relativistic effects have to be accounted for (at least for the electrons), but
collisions can be neglected, i.e. $C_\species = 0$, since the time scale of interest
is much shorter than the collision time. In addition, the wave is a small
perturbation of the electromagnetic fields of the plasma so that a formal
linearization of the Vlasov equation can be physically justified. As a result,
the linearized relativistic Vlasov-Maxwell system is the physically appropriate
model for such applications. Simpler models such as the linearized non-relativistic
Vlasov-Maxwell or the linearized Vlasov-Poisson system, even in reduced
dimension $d<3$, can be interesting as well, for fundamental plasma theory.  

It is however worth starting from an overview of the mathematical results for
the non-linear models, in order to understand the expected regularity of the
distribution functions and the electromagnetic field. The mathematical works on
such a large class of models focus in particular on the associated Cauchy problem   
\cite[and references therein]{Glassey1996, DiPerna1988, Lions1991}.

\subsection{Known results on existence and uniqueness}
For the non-relativistic Vlasov-Maxwell system, i.e. equation~(\ref{eq:RVMS}) with
$\gamma = 1$ and $C_\species = 0$, Wollman \cite{Wollman1984} obtained a local
existence and uniqueness result: for a single-species non-relativistic plasma
and with initial data in $H^s$, $s\geq 5$, and with compactly supported initial
particle distribution, there is a time $T>0$ depending on the initial conditions
such that a unique solution
$\ff \in C\big([0,T], H^s(\R^6)\big) \cap C^1\big([0,T], H^{s-1}(\R^6)\big)$
exists. Later Asano and Ukai \cite{Asano1986}, Degond \cite{Degond1986},
and Schaeffer \cite{Schaeffer1986} independently proved similar results on the
local existence and uniqueness of a solution $(\ff_\species, \Ef, \Bf)$ with
lifespan independent of the speed of light $c$. The fact that the lifespan is
independent of $c$ allowed them to take the limit $c \to \infty$ and to show
convergence to a solution of the Vlasov-Poisson system. E.g., Degond showed the
local existence and uniqueness of the solution in Sobolev spaces $H^s$ with
$s\geq 3$; specifically, if the initial data are in $H^3$, and the initial
distribution function is non-negative, $\ff_\species^0 \geq 0$, and compactly
supported in velocity, Degond has shown that there exist a time $T>0$, depending
on the initial data but not on the speed of light $c$, and a solution
$(\ff_\species, \Ef, \Bf)$ in
$L^\infty(0,T; H^3(\R^6) \times H^3(\R^3) \times H^3(\R^3) \big)$. 
Wollman \cite{Wollman1987} improved his earlier result: again for a single-species
non-relativistic plasma, he showed local existence of a $C^1$ solution with
initial data $\ff^0 \in C^1_0(\R^6)$ and  $\Ef^0, \Bf^0 \in H^3(\R^3)$; specifically
there is $T>0$ depending on the initial condition such that there exists a
solution $\ff \in C^1([0,T] \times \R^6)$ which is unique as an element
of $L^1\big(0,T; H^3(\R^6) \big) \cap C\big([0,T], H^2(\R^6) \big)$.

For the relativistic Vlasov-Maxwell system with multiple species Glassey
and Strauss \cite{Glassey1986} proved existence of a unique global solution in
$C^1(\R \times \R^3 \times \R^3)$ with initial data
$\ff_\species^{0} \in C_0^1(\R^3 \times \R^3)$, $\Ef^0, \Bf^0 \in C^2(\R^3)$
under the \emph{a priori} assumption that a solution $\ff_\species$ is
compactly supported in momenta and the radius of the support is bounded by a
continuous function of time. A simpler proof was given by Bouchut, Golse and
Pallard \cite{Bouchut2003}, and a variant of this result has been proposed by
Klainerman and Staffilani \cite{Klainerman2002}. Existence and uniqueness of a
global $C^1$ solution for $t \in [0,+\infty)$, $(x,p) \in \R^6$ has been shown
by Glassey and Schaeffer \cite{Glassey1988} with compactly supported initial
data satisfying appropriate conditions that require, in particular, the initial
distribution function and electromagnetic fields to be small in $C^1$ and $C^2$ 
norms respectively. We note also the results on global existence with small data
for the relativistic Vlasov-Maxwell system obtained by Horst \cite{Horst1990}.
The key observation is that the decay of $\|\Ef(t)\|_{L^2}^2 + \|\Bf(t)\|^2_{L^2}$
for $t \to +\infty$ completely determine the electromagnetic field, without
initial conditions. Horst makes use of a fixed-point argument to show global
existence of solutions: given the electromagnetic fields $(\Ef, \Bf)$ in a
suitable class of functions, he constructs the characteristic flow for the
kinetic equation, from which he computes the charge and electric current
densities that generate new electromagnetic fields  $(\Ef', \Bf')$.
This defines an operator $Q: (\Ef, \Bf) \mapsto (\Ef',\Bf')$ which is a
contraction if the initial distribution $\ff^0$ and its derivatives are small 
enough.  In this construction, the field $(\Ef', \Bf')$ is obtained as the
solution of Maxwell's equations with the condition 
$\|\Ef(t)\|_{L^2}^2 + \|\Bf(t)\|^2_{L^2} \to 0$ for $t \to +\infty$, 
\cite[definition 2.6 and 3.5]{Horst1990}.

Global existence of weak solution without small data assumption is due to
Di~Perna and Lions \cite{DiPerna1989}: for the non-relativistic case and with
one particle species, given initial data  
$\ff^0 \in L^1 \cap L^2(\R^3 \times \R^3)$ and $\Ef^0, \Bf^0 \in L^2(\R^3)$ with the
conditions $\ff^0 \geq 0$ and
\begin{equation*}
  \int_{\R^3 \times \R^3} |v|^2 \ff^0 dx dv <  +\infty, 
\end{equation*}
they prove existence of 
\begin{equation*}
  \ff \in L^\infty \big(0, +\infty; L^1(\R^3 \times \R^3) \big), \quad
  \Ef, \Bf \in L^\infty \big(0,+\infty; L^2(\R^3) \big),
\end{equation*}
that satisfy the non-relativistic Vlasov-Maxwell system in the sense of
distributions. The conditions on the data are the natural ones since 
$\ff(t, \cdot, \cdot)$ is a phase-space density of particles (and thus must be
non-negative and in $L^1$) and the quantity
\begin{equation*}
  \mathcal{E} = \frac{1}{2} m \int_{\R^3 \times \R^3} |v|^2 \ff dxdv
  + \frac{1}{8\pi} \big(
  \|\Ef\|_{L^2}^2 + \|\Bf\|_{L^2}^2 \big),
\end{equation*}
is the total energy of the system. Di~Perna and Lions have shown that for
such global weak solutions, one has $\mathcal{E}(t) \leq \mathcal{E}(0)$, that
is, they are finite-energy solutions. The key idea of this proof is the use of
renormalized solutions \cite{DiPerna1988, Lions1991}, and this idea has been
applied to the Boltzmann equation \cite{DiPerna1989a} as well. More recently, the
relativistic version has been addressed by Rein \cite{Rein2004}, while
time-periodic weak solutions in bounded (in space) domains have been considered
by Bostan \cite{Bostan2006}. 

As for the Vlasov-Poisson system the study of the Cauchy problem developed along
the same lines, moving from local-in-time classical solutions up to global weak
solutions \cite[and references therein]{Glassey1988, DiPerna1988}. Particularly,
the works by Asano and Ukai, Degond, and Schaeffer cited above give results on
the existence and uniqueness of local solutions the Vlasov-Poisson system. There
are however earlier results on local solutions \cite{Horst1981, Horst1982} and
on global weak solutions \cite{Arsenev1975, Horst1984}.
Existence of a global $C^1$ solutions with small initial data has been
established by Bardos and Degond \cite{Bardos1985}. The first results on global
classical solutions is due to Pfaffelmoser \cite{Pfaffelmoser1992}, where
``classical'' here means that the characteristics system for the Vlasov equation
has a unique classical solution and $\ff$ is constant along the characteristics.

All these results are for the fully nonlinear problem. In this work, however, we
address the linearized problem and we focus specifically on the associated
linear kinetic equation. Wollman \cite[section 3]{Wollman1987} reports the
classical results on the existence of $C^1$ solutions for such linear problems,
the proof of which is based on the standard method of characteristics.
Specifically, if the electric and magnetic fields $\Ef, \Bf$ are in
$C\big([0,T]; H^3(\R^3) \big)$ with $H^3$-norm bounded uniformly in time, the
linear non-relativistic equation with initial datum
$\ff^0 \in C^1_0(\R^3 \times \R^3)$ has a unique classical solution
$\ff \in C^1([0,T] \times \R^3 \times \R^3)$. Here $T$ is the life-span of the
fields and does not depend on the initial distribution.

\subsection{Framework of this paper}
In this paper, we consider a given stationary configuration 
$\{F_{\species,0}(x, p)\}_{\species}$ of the particle distribution functions,
with zero electric field $E_0=0$, and a constant magnetic field $B_0$. 
Then we address the linearized system around the stationary solution
$(\{F_{\species,0}\}_\species, E_0, B_0)$. The associated unknowns are the
linear perturbations $(\{f_{\species}\}_\species, E, B)$, where $f_\species$  is
the perturbed distribution function for the particle species $\species$, while
$E$ and $B$ are the perturbations of electric and magnetic field, respectively. 

Since we consider a linearized problem, the solution for $f_\species$
is not necessarily non-negative, but $F_{\species,0} + f_\species \geq 0$
(essentially, if $F_{\species,0} + f_\species$ fails to be non-negative, we no
longer are in the linear regime). We need to keep the assumption on the
existence of the velocity moments of $f_\species$ and the $L^1$-in-$x$ behavior
(or $L^1_{\mathrm{loc}}$ in the idealized case of a plasma with an infinite
number of particles, e.g., a uniform plasma over the whole space).

We could expect to need and to be able to consider given arbitrary initial data. 
Our aim however is characterizing and determining what is referred to as the 
\emph{dielectric response of the plasma} in the physics literature 
\cite{Brambilla1998, Stix1992, Larsson1982, Bornatici2003a}. 
The underlying physical idea is that the application of a small-amplitude
electromagnetic perturbation determines a small change in the distribution
functions $\ff_\species$, which represents the response of the plasma to the
imposed electromagnetic disturbance. It is similar to the construction of the
operator $Q$ in Horst's fixed-point argument mentioned above. In addition
however, physical reasoning suggests that the response of the plasma should be
uniquely determined by and depend continuously on this imposed perturbation,
hence there should be no need to prescribe a Cauchy datum.

The evolution of a small perturbation $f_\species$ induced by an externally
imposed small electric field disturbance $E$ is governed by the linear
relativistic Vlasov equation
\cite{Bornatici1983, Brambilla1998, Kennel1966, Stix1992},  
\begin{equation}
  \label{eq:linearized-vlasov}  
  \partial_t f_\species + v_\species \cdot \nabla_x f_\species + q_\species 
  \big(v_\species \times B_0/c \big) \cdot \nabla_p f_\species = 
  - q_\species \big(E + v_\species \times B/c \big) \cdot
  \nabla_p F_{\species,0},
\end{equation}
where the electric field $E$ of the disturbance is given, e.g.,  
$E \in [\Schwartz(\R^4)]^3$, and the magnetic field depends linearly on
the electric field via the Faraday law, 
\begin{equation}
  \label{eq:Faraday}
  \partial_t B + c \nabla \times E = 0.
\end{equation}
Since $E$ is given, this is a system of partial differential equations for
$(f_\species, B)$ which reads
\begin{equation*}
  \left\{
  \begin{aligned}
    & \partial_t f_\species + v_\species \cdot \nabla_x f_\species + q_\species 
    (v_\species \times B_0 / c) \cdot \nabla_p f_\species = 
    - q_\species \big(E + v_\species \times B / c \big) \cdot
    \nabla_p F_{\species,0}, \\
    & \partial_t B =- c \nabla \times E.
  \end{aligned}
  \right.
\end{equation*}
From this system, by introducing the new unknown $g_\species = \partial_t f_\species$,
one deduces the decoupled system
\begin{equation}
  \label{eq:gsB-system}
  \left\{
  \begin{aligned}
    & \partial_t g_\species + v_\species \cdot \nabla_x g_\species + q_\species 
    (v_\species \times B_0 / c) \cdot \nabla_p g_\species = 
    - q_\species \big(\partial_t E - v_\species \times \nabla \times E \big) \cdot
    \nabla_p F_{\species,0}, \\
    & \partial_t B =- c \nabla \times E.
  \end{aligned}
  \right.
\end{equation}
The associated homogeneous equation for $g_\species$ is
\begin{equation*}
  \mathcal{H}(g_\species) \coloneqq
  \partial_t g_\species + v_\species \cdot \nabla_x g_\species + q_\species 
  \big( v_\species \times B_0 / c \big) \cdot \nabla_p g_\species = 0.
\end{equation*}
We shall see that if we impose a control of the growth at $t\to\pm\infty$,
the modified equation
$\mathcal{H}(g_{\species,\nu}) + \nu g_{\species,\nu} = 0$ for any $\nu > 0$ has
the unique solution $g_{\species,\nu} = 0$. This is similar to scattering theory
\cite{Lax1967}, in which the scattered field is determined by conditions at
infinity. The introduction of the damping term $\nu g_{\species,\nu}$ is
analogous to the limiting absorption principle
\cite{Lax1967, Eidus1986, Sanchez-Palencia1980} for elliptic equations of the
form $Au=(\lambda + i \eps) u + f$ for $\eps \to 0^+$ (resp. $\eps \to 0^-$), 
where one constructs two resolvents for the elliptic operator $A$. A
classical application of the limiting absorption principle consists in selecting
the unique outgoing-wave solution of the Helmholtz equation. Other examples
include the solution of elliptic equations, as well as the extension of the
resolvent of the operator $-\Delta+V$
\cite[and references therein]{Eidus1986, Nakamura2006, Cacciafesta2018}.
Recently the idea of using scattering theory for the linearized Vlasov-Maxwell
system has been developed by Despr\'es in order to prove linear Landau damping
for inhomogeneous equilibrium distributions \cite{Despres2019}. In this paper,
however, we shall not take advantage of scattering theory, but rather focus on
the solution selected by the growth conditions at infinity in time and the
limiting absorption principle.

The main idea here is to apply the limiting absorption principle to either the
inhomogeneous problem~(\ref{eq:linearized-vlasov}) or (\ref{eq:gsB-system}). We
shall see that for $\nu > 0$, there is a unique tempered solution, which has a
limit for $\nu \to 0^+$, and the limit itself is a tempered solution of
either~(\ref{eq:linearized-vlasov}) or~(\ref{eq:gsB-system}) without damping. We
will find that the limit amounts  exactly to the solution which is referred to
as the \emph{causal solution} in the physics literature and which describes the
response of the plasma to the imposed $E$.

Finding the response of a system is classical. For example, in the modeling of
electric circuits with capacitance $C$ inductance $L$ and resistance $R$, the
electric charge $q: t \mapsto  q(t)$ satisfies the ordinary differential equation 
\begin{equation*}
  LC \ddot{q} + RC \dot{q} + q = CU_0 \cos(\omega t),
\end{equation*}
which leads to the response
\begin{equation*}
  q(t) = \mathrm{Re} \Big[\frac{CU_0 e^{i\omega t}}{1 - LC \omega^2 + i \omega RC}
    \Big],
\end{equation*}
even though one has infinitely many solutions (depending on Cauchy data).

Returning to the linearized Vlasov equation, one can also envisage the use of
other dissipation mechanisms such as the Fokker-Planck operator
$\nu \nabla_v \cdot(\nabla_v f - vf)$ or the collision operators mentioned
above, but this is not addressed here. 

Having identified the unique solution of the linearized Vlasov equation which
describes the response of the plasma to the imposed electric field disturbance,
the corresponding unique perturbation of the electric current density is defined by 
\begin{equation}
  \label{eq:current-density}
  j(t,x) \coloneqq \sum_{\species} q_\species \int_{\R^3} v_\species(p)
  f_\species (t, x, p) dp,
\end{equation}
which requires that the solution $f_\species$ is at least in $L^1$ (the
relativistic velocity is bounded by the speed of light, $|v_\species (p)| < c$).
We shall generalize such integrals to the case of distributional solutions and
see that, since $f_\species$ depends linearly on $E$, the induced current
density $j$ is given by the action of a linear operator on the electric field
$E$, namely,   
\begin{equation}
  \label{eq:response}
  j = \sigma (E).
\end{equation}
This is referred to as the linear constitutive relation of the plasma
and the operator $\sigma$ is the conductivity operator.
Equation~(\ref{eq:response}) is also referred to as (generalized) Ohm's law.



A precise mathematical analysis of the response of a plasma is important because
it is the basis for the construction of constitutive relations for linear plasma
waves, the simplest of which being the Ohm's law. Together with Maxwell's
equations, it determines the linear wave equation describing plasma waves
\cite{Bornatici2003a}. The same problem has been considered by Omnes for a
bounded plasma \cite{Omnes2003}. More recently, Cheverry and Fontaine
\cite{Cheverry2017, Cheverry2017a} have addressed the characteristic variety (or
dispersion relation) for the linearized Maxwell-Vlasov system using asymptotic
methods, but here we focus on the properties of the plasma constitutive relation
as an operator.

We carry out this ideas for the one-dimensional non-relativistic linearized
Vlasov equation without background magnetic field (non-magnetized) and for the
three-dimensional relativistic linearized Vlasov equation with uniform
background and with constant magnetic field.

\subsection{Main results 1: the non-relativistic, one-dimensional case}  
\label{sec:main-1}

We consider first the case of a non-magnetized, non-relativistic plasma in one
dimension in space and velocity. We also restrict ourselves to the case of a
single particle species (and thus drop the index $\species$). This last
simplification does not imply any further loss of generality as the current
density is the sum of the currents carried by the individual species.
With background distribution $F_0 \in \Schwartz(\R)$ depending on $v \in \R$
only, and $E \in \Schwartz(\R^2)$, we consider the linear kinetic equation
\begin{equation}
  \label{eq:Landau-1d}
  \mathcal{L} f \coloneqq 
  \partial_t f + v \partial_x f = - \frac{q}{m} E F'_{\species,0},
\end{equation}
which is a reduced version of equation~(\ref{eq:linearized-vlasov}).
The damped form is 
\begin{equation}
  \label{eq:Landau-damped}
  \mathcal{L}_\nu f_\nu \coloneqq 
  \partial_t f_\nu + \nu f_\nu + v \partial_x f_\nu = - \frac{q}{m} E F'_0.
\end{equation}
If $f$ is a generic distribution (not necessarily a solution of
(\ref{eq:Landau-1d})) with finite first velocity moment, i.e., with
$v \mapsto vf(t,x,v)$ in $L^1$ for all $(t,x)$, we define the operator
\begin{equation}
  \label{eq:current-1d}
  \mathbb{J}(f)(t,x) \coloneqq q \int_\R v f (t,x,v) dv,
\end{equation}
which gives the associated current density, cf. equation~(\ref{eq:current-density}),
in the one-dimensional, non-relativistic case.

\begin{theorem}
  \label{th:main-1-1}
  Let $F_0 \in \Schwartz(\R)$ be given.
  \begin{itemize}
  \item[(i)] If $\nu > 0$, for any $E \in \Schwartz(\R^2)$
    equation~(\ref{eq:Landau-damped}) has a solution $f_\nu \in \Schwartz(\R^3)$
    which is unique as an element of $\Schwartz^\prime(\R^3)$,
    and $j_\nu = \mathbb{J}(f_\nu) \in \Schwartz(\R^2)$. 
  \item[(ii)] For $\nu \to 0^+$, $f_\nu$ and $j_\nu$ have pointwise limits
    $f \in C^\infty_b(\R^3)$ and $j \in C^\infty_b(\R^2)$, respectively;
    in addition, $f_\nu \to f$, and $j_\nu \to j$ also in the topology of
    $\Schwartz^\prime$. 
  \item[(iii)] The limit $f$ is a solution of equation~(\ref{eq:Landau-1d}) and
    $j = \mathbb{J}(f)$.  
  \end{itemize}
\end{theorem}

By the limiting absorption principle of theorem~\ref{th:main-1-1}, to each
$E\in\Schwartz(\R^2)$ we thus can associate a unique $f$, and thus a unique
current density $j$. The conductivity operator is then defined as
the map
\begin{equation}
  \label{eq:sigma-1-2}
  \sigma : \Schwartz(\R^2) \ni E \mapsto j \in C_b^\infty(\R^2) \subset
  \Schwartz^\prime(\R^2).
\end{equation}
For any $\chi \in C_0^\infty(\R)$ with $\chi = 1$ in a neighborhood of zero,
we also introduce the following Fourier multiplier
(cf. appendix~\ref{sec:basic-def} for definitions and notations) 
\begin{equation}
  \label{eq:sigma_1-chi}
  \mathcal{F} \big( \sigma_{1-\chi} (E) \big) (\omega, k) = 
  \big(1 - \chi(k)\big) \hat{\sigma}_{\mathrm{ph}} (\omega,k) 
  \hat{E}(\omega,k), 
\end{equation}
where $\mathcal{F}$ denotes the Fourier transform and $\hat{\sigma}_{\ph}$ 
is the physical conductivity tensor (explicit formula given in
equation~(\ref{eq:sigma-ph})). 

\begin{theorem}
  \label{th:main-1-2}
  The map $\sigma$ defined by equation~ (\ref{eq:sigma-1-2}) is linear and
  continuous from $\Schwartz(\R^2) \to \Schwartz^\prime(\R^2)$ and for every 
  $\chi\in C_0^\infty(\R)$ with $\chi=1$ near zero, $\sigma(E) = \sigma_{1-\chi}(E)$
  for all $E$ satisfying $\hat{E}(\omega,k) = 0$ if $k \in \supp\chi$.
\end{theorem}
  
\begin{remark}
  \label{rem:mein-1-1}
  \leavevmode
  \begin{enumerate}
  \item The limit $f$ established in theorem~\ref{th:main-1-1} coincides with
  the causal solution of~(\ref{eq:Landau-1d}), which is reviewed in
  appendix~\ref{sec:causal-solutions}.
  \item Expressions for the solutions $f_\nu$, $f$, their Fourier transforms, the
    associated currents, and the operators $\sigma_\nu$
    and $\sigma$ are given in section~\ref{sec:Landau}.
  \item  The operator $\sigma_\chi \coloneqq \sigma-\sigma_{1-\chi}$ is well defined
    and $\sigma_\chi(E) = 0$ when $\hat{E}(\omega,k) = 0$ for $k$ small. 
    Expressions of both $\sigma$ and $\sigma_\chi$ are available
    (see proposition~\ref{th:limit-current-2}).
  \item Theorem~\ref{th:main-1-1} can be straightforwardly generalized to the
    case of a non-magnetized non-relativistic plasma with a spatially
    non-homogeneous equilibrium distribution of the form  
    $F_0 (x,v) = n_0(x) \tilde{F}_0(v)$, for which
    the velocity distribution is the same at any point in space. For such
    equilibria, $\sigma_{1-\chi}$ is a pseudo-differential operator. The
    expression of the symbol is obtained in
    section~\ref{sec:limiting-absorption-Landau}, remark~\ref{rem:PsiDO}. 
  \end{enumerate}
\end{remark}

\subsection{Main results 2: relativistic, three-dimensional case} 
\label{sec:main-2}

The second case under consideration is the relativistic Vlasov equation
with a uniform background magnetic field, that is, $B_0$ in
equation~(\ref{eq:linearized-vlasov}) is taken constant and non-zero.
We choose $B_0 = |B_0| e_\parallel$ directed along the third
axis of a Cartesian frame $\{e_1, e_2, e_3 = e_\parallel\}$. It is natural to
normalize the relativistic momentum $p$ to $m_\species c$, and thus to introduce
normalized momentum variables 
\begin{equation}
  \label{eq:us}
  u \coloneqq p /(m_\species c)
  \qquad
  u_\perp \coloneqq (u_1^2 + u_2^2)^{1/2},
  \qquad
  u_\parallel \coloneqq u_3.
\end{equation}
We define the relativistic cyclotron frequency for the considered species, 
\begin{equation}
  \label{eq:omega-c}
  \Omega_\species (u) \coloneqq \frac{1}{\gamma(u)} \frac{q_\species |B_0|}{m_\species c }
  = \sgn(q_\species) \frac{\omega_{c,\species}}{\gamma(u)} > 0,
\end{equation}
which has the sign of the charge $q_\species$ and depends on $u^2$ through
$\gamma(u)=(1+u^2)^{1/2}$, whereas the classical cyclotron frequency
$\omega_{c,\species} \coloneqq |q_\species B_0|/(m_\species c) > 0$ is a positive constant.

The background distribution functions are taken uniform and \emph{gyrotropic},
i.e., $F_{\species,0}$ is constant in time $t$ and space $x$ and depends only on
$u_\parallel$, $u_\perp$, namely,
\begin{equation}
  \label{eq:gyrotropic}
  F_{\species,0} (t,x,p) = \frac{n_{\species,0}}{(m_\species c)^3}
  G_\species (u_\parallel, u_\perp),
\end{equation}
where $n_{\species,0} > 0$ is the constant background particle density and
$G_\species$ is such that 
$u \mapsto G_{\species}(u_3, (u_1^2+u_2^2)^{1/2}\big)$ belongs to $\Schwartz(\R^3)$.
Such property is usually satisfied by the background distribution functions of
practical interest. The momentum distribution $G_\species$ has unit norm in
$L^1(\R \times \R_+, 2\pi u_\perp du_\parallel du_\perp)$. 

Instead of addressing equation~(\ref{eq:linearized-vlasov}) directly, we
consider the kinetic equation in~(\ref{eq:gsB-system})
for $g_{\species} = \partial_t f_{\species}$, which amounts to
\begin{equation}
  \label{eq:magnetized-uniform-g}
  \mathcal{V}_{\species} g_{\species}
  = - q_\species \big(\partial_t E - v_\species \times \nabla \times E \big)
  \cdot \nabla_p F_{\species,0},
\end{equation}
where
\begin{equation}
  \label{eq:vlasov-magnetized-uniform}
  \mathcal{V}_{\species} = \partial_t + v_\species \cdot\nabla_x
  +\sgn(q_\species) \frac{\omega_{c,\species}}{\gamma}
  (u \times e_\parallel) \cdot \nabla_u,
\end{equation}
with $v_\species = p /(m_\species \gamma_\species) = c u/\gamma(u)$ being the
relativistic velocity as a function of the normalized momentum.

We add to equation~(\ref{eq:magnetized-uniform-g}) a
damping term, with the idea of applying the limiting absorption principle.
Because of the $u$-dependent relativistic factor $\omega_{c,\species}/\gamma$
the damping coefficient will be multiplied by $\gamma$ after Fourier transform
in time and space of equation~(\ref{eq:magnetized-uniform-g}). It is therefore
convenient to allow the damping coefficient $\nu_\species$ to depend on the
species $\species$ and on momentum $u$ from the beginning, subject to the conditions 
\begin{equation}
  \label{eq:nu-conditions}
  \left\{
  \begin{aligned}
    & \nu_\species \in C^\infty(\R^3), \\
    & \text{there exists $\nu_0 >0$ : } \gamma(u)\nu_\species(u) \geq \nu_0, \\
    &  (u_1 \partial_{u_2} - u_2 \partial_{u_1})\nu_\species(u) = 0, \\
    & \text{and there exists $m \in \R$ : }
    |\partial^\alpha_u \nu_\species(u)| \leq C_\alpha (1+u^2)^m, \; \forall u \in
    \R^3, \forall \alpha \in \N_0^3,
  \end{aligned}
  \right.
\end{equation}
where $C_\alpha \in \R$ are constants depending only on the order $\alpha$ of
the derivatives. For example, if $\nu_\species \in C^\infty_b(\R^3)$ is a function of
$u_1^2 + u_2^2$ and $u_3$ only, then conditions~(\ref{eq:nu-conditions}) are
fulfilled with $m=0$. We shall see that the dissipation-less limit is
independent of the choice of this damping function. 

For $\eps > 0$ and for any function $\nu_\species$, satisfying conditions
(\ref{eq:nu-conditions}), we consider the regularized equation
\begin{equation}
  \label{eq:magnetized-uniform-g-damped}
  \mathcal{V}_{\species,\eps} g_{\species,\eps}
  = - q_\species \big(\partial_t E - v \times \nabla \times E \big)
  \cdot \nabla_p F_{\species,0},
\end{equation}
where $\mathcal{V}_{\species,\eps} = \mathcal{V}_\species  + \eps \nu_\species$.

If $g_\species$ are generic distributions with finite first velocity moment,
i.e., $g_\species(t,x,\cdot) \in L^1 (\R^3)$ for every $(t,x)$, we define 
\begin{equation}
  \label{eq:djnu_dt}
  \mathbb{K}(\{g_\species\})(t,x) \coloneqq \sum_{\species} q_\species (m_\species c)^3
  \int_{\R^3} v_\species(u) g_\species(t,x,u) du,
\end{equation}
which gives the time-derivative of the current~(\ref{eq:current-density}) when
$g_\species = \partial_t f_\species$. 
For this model, the limiting absorption principle parallels theorem~\ref{th:main-1-2}.

\begin{theorem}
  \label{th:main-2-1}
  Let $F_{\species,0} \in \Schwartz(\R^3)$ be given uniform gyrotropic
  distribution functions and let $\nu_\species$ be any function satisfying
  conditions~(\ref{eq:nu-conditions}).  
  \begin{itemize}
  \item[(i)] If $\eps > 0$, for any $E \in [\Schwartz(\R^4)]^3$
    equation~(\ref{eq:magnetized-uniform-g-damped}) has a solution
    $g_{\species,\eps} \in \Schwartz(\R^7)$ which is
    unique as an element of $\Schwartz^\prime(\R^7)$,
    $\partial_t j_\eps = \mathbb{K}(\{g_{\species,\eps}\}) \in [\Schwartz(\R^4)]^3$.
  \item[(ii)] For $\eps \to 0^+$, $g_{\species,\eps}$ and
    $\partial_t j_\eps$ have limits $g_\species$ and $\partial_t j$ in
    $\Schwartz^\prime$, independent of $\nu_\species$. 
  \item[(iii)] The limit $g_\species$ belongs to $C^\infty_b(\R^7)$ and is a
    classical solution of equation~(\ref{eq:magnetized-uniform-g}); in addition
    $g_\species$ belongs to the domain of $\mathbb{K}$ and
    $\partial_t j = \mathbb{K}(\{g_\species\}) \in C^\infty_b(\R^4)$.
  \end{itemize}
\end{theorem}

Therefore we can define the map
\begin{equation}
  \label{eq:sigma-2-2}
  \varsigma : [\Schwartz(\R^4)]^3 \ni E
  \mapsto \partial_t j \in [\Schwartz^\prime(\R^4)]^3.
\end{equation}
We shall see that it can be represented by a Fourier multiplier if we exclude
the hyperplane $\omega = 0$ in Fourier space.
For any cut-off function $\chi \in C_0^\infty(\R)$ with $\chi=1$
in a neighborhood of zero, we define the Fourier multiplier $\varsigma_{1-\chi}$ by
\begin{equation}
  \label{eq:sigma_ph-uniform-magnetized}
  \mathcal{F}(\varsigma_{1-\chi}(E))(\omega,k) = \big(1-\chi(\omega)\big)
  \hat{\varsigma}_0 (\omega,k)\hat{E}(\omega,k),
\end{equation}
where $\hat{\varsigma}_0(\omega,k)$ is the limit established
in proposition~\ref{th:main-4-2}. An expression for $\hat{\varsigma}_0$ is given
in equation~(\ref{eq:varsigma-ph}), and proposition~\ref{th:main-4-2}
establishes that $\hat{\varsigma}_0$ is continuous  for $\omega\not=0$ and
$C^\infty$ where $\omega^2 \not= c^2k^2 + n^2 \omega_{c,\species}^2$ for all
$n \in \Z$ and all species $\species$.  

\begin{theorem}
  \label{th:main-2-2}
  The map $\varsigma$ defined in (\ref{eq:sigma-2-2}) is continuous
  and for any $\chi\in C_0^\infty(\R)$ with $\chi = 1$ in a neighborhood of zero,
  $\varsigma(E) = \varsigma_{1-\chi}(E)$
  if $\hat{E}(\omega,k) = 0$ for $\omega \in \supp\chi$.
\end{theorem}

\begin{remark}
  \label{rem:mein-1-2}
  \leavevmode
  \begin{enumerate}
  \item The hypothesis that $\nu_\species$ is in the kernel of the operator
    $u_1 \partial_{u_2} - u_2 \partial_{u_1}$ expresses the fact that
    $\nu_\species$ must  be gyrotropic.
  \item The solution $g_\species$ is the unique causal solution
    of (\ref{eq:magnetized-uniform-g}) (defined in
    appendix~\ref{sec:causal-solutions}). 
  \item We also have pointwise convergence of $g_{\species,\eps} \to g_\species$.
  \item Existence and uniqueness of the solution $g_{\species,\eps}$ is
    established via Fourier transform, while the causal solution is obtained by
    integration along the characteristics, cf. appendix~\ref{sec:causal-solutions}.
    Hence the proof of theorem~\ref{th:main-2-1} (iii) establishes a link
    between the formulations in Fourier and physical variables.
  \item An explicit expression of the linear
    operator $\varsigma$ valid without restriction of the support of $E$ is
    given below in proposition~\ref{th:jnu-convergence}. 
  \end{enumerate}
\end{remark}

\subsection{Concluding remarks and structure of the paper}
Theorems~\ref{th:main-1-2} and~\ref{th:main-2-2} in particular show that the
response of a uniform plasma to oscillatory electromagnetic disturbances can be
expressed by a Fourier multiplier. Although limited to a simple plasma
equilibrium, these results support the physics theories that rely on the
pseudo-differential form of the conductivity operator \cite{Kravtsov1969a,
  Bernstein1975, McDonald1985, McDonald1988, McDonald1991, Bornatici2003a}. More
precisely, even though the response of a plasma is rigorously not a
pseudo-differential operator, it can be written as the sum of a
pseudo-differential operator plus a remainder which vanishes if the spectrum of
the electric-field disturbance is supported away from $\omega=0$
(or $k=0$ in the simpler case of theorem~\ref{th:main-1-1}); this is typically
the case in the envisaged applications, since the frequency of the perturbation
is set by an external source and it is tuned to resonate with the cyclotron
motion of a particle species. 

In order to illustrate these specific applications at least qualitatively, we
return to the linearized Vlasov-Maxwell system, that is,
\begin{equation}
  \label{eq:lin-VM-system}
  \left\{
  \begin{aligned}
    \partial_t f_\species +v_\species(p)\cdot\nabla_x f_\species +
    q_\species (v_\species \times B_0/c) \cdot \nabla_p f_\species
    &= -q_\species (E + v_\species \times B/c) \cdot \nabla_pF_{\species,0}, \\
    \partial_t E - c \nabla \times B &= -4\pi \sum_\species q_\species
    \int_{\R^3} v_\species f_\species dp, \\
    \partial_t B + c \nabla \times E &= 0,
  \end{aligned}
  \right.
\end{equation}
for $f_\species$, $E$, and $B$. The two equations for the electromagnetic field
$(E,B)$ imply (formally at least, by taking the time-derivative of the
Amp\`ere-Maxwell law)
\begin{equation*}
  \partial_t^2 E + c^2 \nabla \times \big(\nabla \times E\big) + 4\pi
  \partial_t j = 0,
\end{equation*}
where $\partial_t j = \mathbb{K}(\{\partial_t f_\species\})$. This equation
depends on the time-derivative of the induced current $\partial_t j$ rather than
on $j$ alone, and the map~(\ref{eq:sigma-2-2}) give $\partial_tj = \varsigma(E)$
when $E \in \Schwartz(\R^4)$. However, if the solution is highly oscillatory
(high-frequency waves), $\hat{E}(\omega,k) = 0$ near $\omega=0$ and we can
replace $\varsigma$ by the Fourier
multiplier~(\ref{eq:sigma_ph-uniform-magnetized}) with the low-frequency
cut-off, obtaining   
\begin{equation}
  \label{eq:wave-equation}
  D(i\partial_t, -i\partial_x)E \coloneqq
  \partial_t^2 E + c^2 \nabla \times \big(\nabla \times E\big)
  + 4\pi \varsigma_{1-\chi}(E) = 0,
\end{equation}
which is a constant-coefficients pseudo-differential equation for the electric
field only. Theorem~\ref{th:main-2-2} implies that the operator
$D(i\partial_t,-i\partial_x): [\Schwartz(\R^4)]^3 \to [\Schwartz^\prime(\R^4)]^3$
is continuous and we have established regularity results for its symbol in
proposition~\ref{th:main-4-2}. The symbol, in particular, is polynomially bounded
and thus the operator extends to  
\begin{equation*}
  D(i\partial_t,-i\partial_x) : [H_{(s)}(\R^4)]^3  \to [H_{(s-m)}(\R^4)]^3,
 \qquad s \in \R,
\end{equation*}
where $H_{(s)}(\R^4)$ is the space of $w \in \Schwartz^\prime(\R^4)$ such
that $(1 + (\tfrac{\omega}{\hat{\omega}})^2 +
(\tfrac{ck}{\hat{\omega}})^2)^{\frac{s}{2}} \hat{w} \in L^2(\R^4)$, with the
normalization frequency chosen by $\hat{\omega} \coloneqq \max_\species |\omega_{c,s}|$
and with $m = \max\{2M, 2\}$ where the integer $M$ being the degree of the
polynomial bound for $\hat{\varsigma}_0$ established in
proposition~\ref{th:main-4-2} in section~\ref{sec:Limiting-absorption-magnetized}.

The semi-classical methods commonly used to find approximate solutions of the
wave equation~(\ref{eq:wave-equation}) (cf. the work of Prater et
al. \cite{Prater2008} for an overview of computational tools) are valid under
strong assumptions on the symbol of the operator $D$. These assumptions are
smoothness of the symbol and the fact that its anti-Hermitian part is small in a
certain sense (weakly non-Hermitian operators) \cite{Kravtsov1969a,
  Bernstein1975, McDonald1985, McDonald1988, McDonald1991, Bornatici2003a}. 
In this paper we analyze the construction of the operator in detail.
Theorem \ref{th:main-2-2} shows that the full operator $\varsigma$ defined
in~(\ref{eq:sigma_ph-uniform-magnetized}) has an additional contribution that
accounts for the low-frequency response of the plasma. For the high-frequency
part (operator~(\ref{eq:sigma_ph-uniform-magnetized})), smoothness of the symbol
is established almost everywhere in Fourier space, cf. proposition~\ref{th:main-4-2}.
As for the assumption of weak anti-Hermitian part, this is always violated  
near cyclotron resonances and the application of standard computational methods
is justified by heuristic arguments only. Propagation near a resonance has been
addressed in the physics literature \cite{Westerhof1997, Balakin2007} but a
satisfactory theory is not available. Then the only rigorous approach to the
problem would be the direct numerical computation of the solution of the
linearized Vlasov-Maxwell system, which is computationally too expensive in
realistic cases. The precise characterization of response operators may help to
improve the available methods toward including resonances. 

In section~\ref{sec:simple-case}, a simple case study is presented in order to
illustrate the ideas. The rest of the paper is dedicated to the proofs.
In section~\ref{sec:Landau} the case of a non-relativistic isotropic plasma in
one dimension is addressed, while section~\ref{sec:uniform} is dedicated to
relativistic uniform magnetized plasmas. An overview of notations and standard
definitions together with technical results can be found in the appendices. In
appendix \ref{sec:causal-solutions} in particular a precise definition of causal
solution is given for advection equations associated to global-in-time flows.

%
\section{Characterization of the response operator: a simple case study}
\label{sec:simple-case}

In this section, we study a simple model which however contains the essential
elements of the full problem. The aim of these simple considerations is showing
how the limiting absorption principle determines the causal solution of a
hyperbolic equation. All proofs are straightforward and reported in
appendix~\ref{sec:proofs-toy-model} for the reader's convenience.   

Given $v \in \Schwartz(\R^{1+d})$, we consider the equation
\begin{equation*}
  \partial_t u(t,x) = v(t,x), \quad u(0,\cdot) = u_0 \in C^\infty_b(\R^d),
\end{equation*}
where $C_b^\infty$ is the space of smooth bounded functions with bounded
derivatives, cf. appendix~\ref{sec:basic-def} for the precise definition.

\begin{proposition}
  \label{th:simple-model-causal-solution}
  For $v \in \Schwartz(\R^{1+d})$, there exists a unique solution $u$ in 
  $C^\infty_b(\R^{1+d})$ of the equation $\partial_t u = v$ such that
  $\lim_{t\to-\infty} u(t,x) = 0$ pointwise in $x$, and  
  \begin{equation*}
    u(t,x) = \int_{-\infty}^t v(s,x)ds.
  \end{equation*}
  The map $v \mapsto u$ is a continuous linear operator both from
  $\Schwartz(\R^{1+d}) \to \Schwartz^\prime(\R^{1+d})$ and from
  $\Schwartz(\R^{1+d}) \to L^\infty(\R^{1+d})$.
\end{proposition}

If we think of $u$ as the response to a localized perturbation $v$, causality
requires that $u \to 0$ for $t\to - \infty$ since the perturbation decreases
faster than any polynomial when $t \to -\infty$. Hence, the solution given in
proposition \ref{th:simple-model-causal-solution} is referred to as the causal
solution. Since $v \in \mathcal{S}(\R^{1+d})$, 
\begin{equation*}
  \int_{-\infty}^0 v(s,\cdot)ds 
\end{equation*}
is finite. The causality principle selects a unique initial condition $u_0$ given by
\begin{equation*}
  u_0 (x) = \int_{-\infty}^0 v(s,x)ds,
\end{equation*}
thereby allowing us to define the linear continuous operator $v \mapsto u$, which
we view as the response of the operator $\partial_t$ for a perturbation $v$.
We also note that the limit for $t \to +\infty$ of the solution gives the time
integral of the perturbation $\int_\R v(s,x)ds$. 

The following simple result provides us with a characterization of the causal
solution, that is used in this paper for more general problems.

\begin{proposition}
  \label{th:characterization}
  The damped problem $\partial_t u^\nu + \nu u^\nu = v$ in 
  $\Schwartz^\prime(\R^{1+d})$ with $\nu >0$ and $v \in \Schwartz(\R^{1+d})$ has
  a unique solution $u^\nu \in \Schwartz^\prime(\R^{1+d})$. It belongs to
  $\Schwartz(\R^{1+d})$ and it is given by 
  \begin{equation*}
    u^\nu(t,x) = \int_{-\infty}^t e^{-\nu \cdot (t-t')} v(t', x)dt'.
  \end{equation*}
  Furthermore, $u^\nu \to u$ in $\Schwartz^\prime(\R^{1+d})$ as $\nu \to 0^+$,
  where $u$ is the causal solution obtained in
  proposition~\ref{th:simple-model-causal-solution}. 
\end{proposition}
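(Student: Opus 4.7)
The plan is to handle the regularized problem via Fourier transform in both variables, then obtain the $\nu \to 0^+$ limit by combining pointwise convergence with a uniform $L^\infty$ bound that is already implicit in proposition~\ref{th:response-simple-case}.

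\textbf{Existence, uniqueness, and the explicit formula.} First, I would apply the Fourier transform (with the sign convention of section~\ref{sec:basic-def}) to reduce $\partial_t u^\nu + \nu u^\nu = v$ to the multiplicative equation $(-i\omega + \nu)\hat{u}^\nu = \hat{v}$. Since $\nu > 0$, the symbol $-i\omega + \nu$ never vanishes on $\R_\omega$, and its reciprocal, together with all its $(\omega,k)$-derivatives, is bounded; it therefore belongs to the multiplier space $\mathcal{O}_M$ of $\Schwartz$. Multiplication by $1/(-i\omega + \nu)$ is thus continuous on both $\Schwartz$ and $\Schwartz^\prime$, which gives a unique $\hat{u}^\nu = \hat{v}/(-i\omega + \nu) \in \Schwartz$ and hence a unique $u^\nu \in \Schwartz$. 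To identify $u^\nu$ with the stated integral formula, I would denote the right-hand side by $U(t,x)$ and differentiate under the integral sign using the Leibniz rule and the fundamental theorem of calculus, obtaining $\partial_t U + \nu U = v$; by the uniqueness just established, $U = u^\nu$.

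\textbf{Passage to the limit.} For the limit $\nu \to 0^+$, I would proceed in physical space rather than through the Sokhotski--Plemelj identity for $1/(-i\omega + \nu)$. Because $0 \le e^{-\nu(t-t')} \le 1$ for $t' \le t$ and $t' \mapsto v(t',x)$ is integrable on $(-\infty, t)$, dominated convergence yields pointwise convergence $u^\nu(t,x) \to u(t,x)$, where $u(t,x) = \int_{-\infty}^t v(t',x) dt'$ is the causal solution from proposition~\ref{th:causal-solution-simple}. The same estimate used in the proof of proposition~\ref{th:response-simple-case} applies verbatim with the extra bounded factor $e^{-\nu(t-t')}$ and yields $\|u^\nu\|_{L^\infty(\R^{1+d})} \le K_\mu \|v\|_{2\mu}$ uniformly in $\nu > 0$. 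Then, for any $\varphi \in \Schwartz(\R^{1+d})$, the integrand $u^\nu \varphi$ converges pointwise and is dominated by $K_\mu \|v\|_{2\mu} |\varphi| \in L^1(\R^{1+d})$; a second application of dominated convergence gives $\langle u^\nu, \varphi \rangle \to \langle u, \varphi \rangle$, i.e., $u^\nu \to u$ in $\Schwartz^\prime$, which identifies $u_* = u$.

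\textbf{Anticipated difficulty.} The delicate point is the uniqueness claim in $\Schwartz^\prime$: one must exclude any tempered distributional solution of $\partial_t w + \nu w = 0$ other than $0$. The Fourier argument above is the cleanest route, but it rests on the observation that $1/(-i\omega + \nu)$ is a well-defined multiplier on $\Schwartz^\prime$, which uses $\nu > 0$ crucially (and conspicuously fails at $\nu = 0$, which is precisely what makes the $\nu \to 0^+$ limit a genuine selection principle). If one prefers to avoid the $\mathcal{O}_M$-terminology, the same fact can be phrased as: the distributional equation $(-i\omega + \nu)\hat{w} = 0$ on $\R^{1+d}$ forces $\supp \hat{w}$ to lie in the zero set of $-i\omega + \nu$, which is empty. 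Everything else is either a routine application of dominated convergence or a direct verification of the integral formula.
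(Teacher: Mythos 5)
Your proposal is correct, and its core coincides with the paper's: both reduce $\partial_t u^\nu + \nu u^\nu = v$ by Fourier transform to $(-i\omega+\nu)\hat{u}^\nu=\hat{v}$ and exploit that, for $\nu>0$, the symbol never vanishes and $1/(-i\omega+\nu)$ is smooth and polynomially bounded together with all derivatives, so the tempered solution is unique and in fact lies in $\Schwartz(\R^{1+d})$. Where you genuinely diverge is in identifying $u^\nu$ with the integral formula. The paper derives it: it writes $i/(\omega+i\nu)=\int_0^{+\infty}e^{i(\omega+i\nu)t}\,dt$, unwinds the Fourier inversion of $\langle u^\nu,\varphi\rangle$ with Fubini's theorem, and lands on the kernel $\int_{-\infty}^t e^{-\nu(t-s)}v(s,x)\,ds$ after a change of variables. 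You instead exhibit that kernel as a candidate, check by differentiation under the integral sign that it is a classical solution, and invoke the uniqueness already proved; this is shorter and avoids the bookkeeping of the inversion computation, at the price of two small verifications you should make explicit, namely that your $U$ is bounded and continuous (hence a tempered distribution, so uniqueness applies to it) and that its classical equation implies the distributional one. For the limit $\nu\to0^+$ the two arguments are essentially equivalent and both rest on the domination $0\le e^{-\nu(t-s)}\le1$: the paper applies dominated convergence in one stroke to the triple integral $\int_{\R^{1+d}}\int_{-\infty}^t e^{-\nu(t-s)}v(s,x)\varphi(t,x)\,ds\,dt\,dx$, while you factor the same estimate through pointwise convergence of $u^\nu$ plus the uniform bound $\|u^\nu\|_{L^\infty}\le K_\mu\|v\|_{2\mu}$ inherited from proposition~\ref{th:response-simple-case}; your version has the minor advantage of recording that uniform bound, which is also what makes the family $\{u^\nu\}$ bounded in $L^\infty$ even though, as the paper notes, it is not bounded in $\Schwartz$.
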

 
\begin{remark}
  The integral in the definition of $u^\nu$ is absolutely convergent as 
  $t - t' \geq 0$ on the domain of integration and $v(\cdot, x) \in L^1(\R)$.
  The fact that $u^\nu \in \Schwartz(\R^{1+d})$ is proven by showing that the
  Fourier transform $\hat{u}^\nu$ is in $\mathcal{S}(\R^{1+d})$.
\end{remark}

Proposition~\ref{th:characterization} establishes that the admissible solution
$u$ of the model without dissipation
(in the sense that $u \to 0$ when $t \to -\infty$) is the limit of the unique
solution $u^\nu$ in $\Schwartz^\prime$ of the model with dissipation.

%
\section[Uniform isotropic plasmas in one spatial dimension]{%
  \for{toc}{Uniform isotropic plasmas in one spatial dimension}%
  \except{toc}{%
    Uniform isotropic plasmas in one spatial dimension:
    \texorpdfstring{\\}{} the standard linear Landau damping}%
}
\label{sec:Landau}

Here we consider in detail the case of a non-magnetized non-relativistic plasma
in one spatial dimension and for a single particle species. Coupled to the Poisson
equation this is the textbook example for linear Landau damping. Equation
(\ref{eq:linearized-vlasov}) reduces to 
\begin{equation}
  \label{eq:linearized-Vlasov-1d}
  \partial_t f(t,x,v) + v \partial_x f(t,x,v) = - (q/m) E(t,x) F'_0(v),
\end{equation}
where $F_0 \in \Schwartz(\R)$ is the equilibrium
distribution function, and $E \in \Schwartz(\R^2)$ is the electric field
perturbation. We drop the species index $\species$ since we consider one species
only. In this case the linearized Vlasov operator is the free-transport operator
\begin{equation*}
  \mathcal{L} = \partial_t  + v \partial_x.
\end{equation*}
Viewed as an operator from $\Schwartz^\prime(\R^3)$ into itself, $\mathcal{L}$
has a non-trivial null space given by all tempered distributions with partial
Fourier transform in $(t,x)$ of the form
$\hat{f} = 2\pi \kappa^* (\delta \otimes h)$ for $h \in \Schwartz^\prime(\R^2)$
and with $\kappa : (\omega, k, v) \mapsto (\omega - kv, k, v)$ being a
volume-preserving diffeomorphism of $\R^3$, and $\kappa^*$ denotes the pull-back
of distributions. Usually, this is formally written as
$\hat{f}(\omega, k, v) = 2\pi \delta(\omega - kv) \hat{h}(k,v)$, in the physics
literature. In particular, (\ref{eq:linearized-Vlasov-1d}) has infinitely many
solutions in $\Schwartz^\prime$.

We consider a damped version of the advection operator and then pass to the
limit to recover a solution of the original problem (limiting absorption
principle). More precisely, for a given $F_0\in \Schwartz(\R)$, we first prove
the existence and uniqueness of the solution $f_{\nu}$ in
$\Schwartz^\prime(\R^3)$ of~(\ref{eq:Landau-damped}). We find that the unique
solution $f_\nu \in \Schwartz^\prime(\R^3)$ is an element of $\Schwartz(\R^3)$
and we thus define a map from $\Schwartz(\R^2)$ to $\Schwartz(\R^3)$. We first
calculate the Fourier transform of $f_\nu$ in $(t,x)$
(proposition \ref{th:reg-problem}), from which we deduce the function itself. We
obtain from this unique solution the damped current 
$j_{\nu}(t,x)\coloneqq\int_{\R}vf_{\nu}(t,x,v)dv$ 
through its Fourier transform, and the limits when $\nu \to 0^+$, respectively 
in $\Schwartz^\prime(\R^3)$ and in $\Schwartz^\prime(\R^2)$ of $f_{\nu}$ 
(proposition \ref{th:limit-real-space-Landau}) and of $j_{\nu}$ 
(proposition \ref{th:Landau-limit-jnu}). 
This shows that $\lim_{\nu \to 0^+} j_{\nu} =  \sigma(E)$ where $\sigma$ 
is an operator (called the conductivity operator), for which we give expressions.  
Indeed, equality (\ref{th:limit-current-1}) below gives its pointwise limit as a
Fourier multiplier and its global expression is described in
proposition~\ref{th:limit-current-2}, which rewrites using the plasma physics 
language as proposition  \ref{th:physics-conductivity} on the object called
conductivity.

\subsection{Solution of the linearized Vlasov equation}
\label{sec:reg-problem}

In this section we prove existence and uniqueness of the solution of the
damped problem~(\ref{eq:Landau-damped}), which reads 
\begin{equation*}
  \mathcal{L}_\nu  f_\nu  = - (q/m)E F'_0, 
\end{equation*}
where $\mathcal{L}_\nu = \mathcal{L} + \nu = \partial_t  + v \partial_x + \nu$.
Let us also define the function
\begin{equation*}
  f_{\nu,*} (x,v) \coloneqq - (q/m) F'_0(v) \int_{-\infty}^0 e^{\nu s} 
  E\big(s,x + vs\big)ds,
\end{equation*}
the integral being absolutely convergent.

\begin{proposition}
  \label{th:reg-problem}
  For any $\nu > 0$, $E \in \Schwartz(\R^2)$ and $F_0\in \Schwartz(\R)$,
  equation~(\ref{eq:Landau-damped}) has a solution
  $f_\nu \in \Schwartz(\R^3)$, given by its Fourier transform
  \begin{equation}
    \label{eq:prototype1}
    \hat{f}_\nu = -i(q/m) \frac{\hat{E} F_0'}
        {\omega - kv +i \nu}.
  \end{equation}
  This is the unique solution in $\Schwartz^\prime(\R^3)$ and the unique
  (classical) solution of the Cauchy problem 
  \begin{equation*}
    \mathcal{L}_\nu  f_\nu(t,x,v)  = - (q/m)E(t,x) F'_0(v), \qquad
    f_\nu(0,x,v) = f_{\nu,*}(x,v),
  \end{equation*}
  with initial condition given at $t=0$.
\end{proposition}

\begin{remark}
  In proposition~\ref{th:reg-problem}, we distinguish between the equation in
  $\Schwartz^\prime$ and the equation stated for $C^1(\R^2)$ functions. 
\end{remark}

\begin{proof}
  For $F_0 \in \Schwartz(\R)$, $E \in \Schwartz(\R^2)$ and $\nu > 0$, after
  partial Fourier transform in $(t,x)$ we observe that necessarily a solution in
  $\Schwartz^\prime$ of the considered equation is given by~(\ref{eq:prototype1}).
  Since the function $(\omega,k,v) \mapsto (\omega -kv +i\nu)^{-1}$ belongs to
  $C^\infty$ and has polynomially bounded derivatives, from
  equation~(\ref{eq:prototype1}) we check that
  $\hat{f}_\nu \in \Schwartz (\R^3)$, and thus $f_\nu \in \Schwartz(\R^3)$. 
  However the sequence $\{f_\nu\}_\nu$ is not uniformly bounded in $\Schwartz$. 
  
  The inverse partial Fourier transform gives 
  \begin{equation}
    \label{eq:Landau-fnu-again}
    f_\nu (t,x,v) = - (q/m) F_0'(v) \int_{-\infty}^t e^{-\nu \cdot (t-s)} 
    E \big(s, x - v \cdot (t-s) \big) ds,
  \end{equation}
  and we note that
  \begin{equation*}
    X(s,t,x,v) = x - v \cdot (t - s), \quad
    V(s,t,x,v) = v,
  \end{equation*}
  is the solution of the equations for the characteristic curves of
  $\mathcal{L}_\nu$ integrated backward in time from $(t,x,v)$. Therefore
  (\ref{eq:Landau-fnu-again}) is the classical solution as claimed.
\end{proof}

\begin{remark}
  This result shows that the requirement $f_\nu \in \Schwartz^\prime (\R^3)$
  leads to the selection of a specific initial condition $f_{\nu,*}$ for the
  Cauchy problem, thus uniquely determining the solution. Conversely, the
  solution of the Cauchy problem with initial condition $f_{\nu,*}$ is an
  element of $\Schwartz$ and thus of $\Schwartz^\prime$.

  As an alternative way to illustrate how the condition
  $f_\nu \in \Schwartz^\prime (\R^3)$ leads to the selection of a specific
  Cauchy datum, one can consider for $f_{\nu,0} \in L^2(\R^2)$  the initial value
  problem in $C^1 \big(\R, L^2(\R^2)\big)$,
  \begin{equation*}
    \left\{
    \begin{aligned}
      & \partial_t f_\nu (t,x,v) + v \partial_x f_\nu (t,x,v)
      + \nu f_\nu (t,x,v) = - (q/m) E(t,x)  F_0'(v), \\
      & f_\nu (0,x,v) = f_{\nu,0} (x,v).
    \end{aligned}
    \right.
  \end{equation*}
  Performing the Fourier transform in space, we obtain an ordinary
  differential equation almost everywhere in the $(k,v)$-space,
  \begin{equation*}
    \left\{
    \begin{aligned}
      & \partial_t \tilde{f}_\nu (t,k,v) + ikv \tilde{f}_\nu (t,k,v) 
      + \nu \tilde{f}_\nu (t,k,v) = - (q/m) \tilde{E}(t,k) F_0'(v), \\
      & \tilde{f}_\nu (0,x,v) = \tilde{f}_{\nu,0} (k,v),
    \end{aligned}
    \right.
  \end{equation*}
  the solution of which is
  \begin{equation*}
    \tilde{f}_\nu (t,k,v) = e^{-(\nu + ikv)t} 
    \Big[ \tilde{f}_{\nu,0} (k,v) - (q/m) F_0'(v) \int_0^t e^{(\nu + ikv)s}
      \tilde{E}(s,k)ds \Big].
  \end{equation*}
  We see that $f_\nu, \tilde{f}_\nu \in C^\infty \big(\R, L^2(\R^2)\big)$. 
  The integral factor has a finite limit for $t \to -\infty$,
  \begin{align*}
    \tilde{f}_{\nu,*}(k,v) &= (q/m) F_0'(v) \lim_{t\to -\infty} \int_0^t e^{(\nu + ikv)s}
    \tilde{E}(s,k)ds \\
    &= -(q/m) F_0'(v) \int_{-\infty}^0 e^{(\nu + ikv)s} \tilde{E}(s,k)ds,
  \end{align*}
  and $\tilde{f}_{\nu,*} \in L^2(\R^2)$. We show that if 
  $\tilde{f}_{\nu,0} \not = \tilde{f}_{\nu,*}$, then the solution $\tilde{f}_\nu$,
  or equivalently $f_\nu$, is not tempered in time for any $k,v$ fixed. With
  this aim we write  
  \begin{equation*}
    \tilde{f}_\nu(t,k,v) = e^{-(\nu + ikv)t} \big[\tilde{f}^{(0)}_\nu (k,v)
      - \tilde{f}_{\nu,*}(k,v)\big] - \frac{qF'_0(v)}{m} \int_{-\infty}^t
    e^{-(\nu + ikv)(t-s)} \tilde{E}(s,k)ds,
  \end{equation*}
  and one observes that, for every $k,v$, the second term on the right-hand side
  belongs to $C_b^\infty(\R)$ and thus to $\Schwartz^\prime(\R)$. As for the
  the first term on the right-hand side, for any $k$, $v$, and $\nu>0$
  the function $t \mapsto e^{-\nu t +ikvt}$ is not tempered, since there exists a
  test function $\varphi(t) = e^{-\nu \sqrt{1+t^2} -ikvt}$ in $\Schwartz(\R)$
  such that $\int e^{-\nu t +ikvt} \varphi(t) dt = +\infty$. Hence the
  first term cannot be the partial Fourier transform of a distribution, which
  was the hypotheses that allowed us to write the equation in Fourier space. 
  
  It follows that we have $\tilde{f}_\nu \in \Schwartz^\prime$ if and only if
  the initial condition satisfies
  $\tilde{f}^{(0)}_\nu (k,v) = \tilde{f}_{\nu,*}(k,v)$ almost everywhere, that is,
  \begin{equation*}
    \tilde{f}_{\nu,0} (k,v) = \tilde{f}_{\nu,*}(k,v) 
    = - (q/m) F_0'(v) \int_{-\infty}^0 e^{(\nu + ikv)s} \tilde{E}(s,k)ds.
  \end{equation*}
  The corresponding solution amounts to
  \begin{equation*}
    \tilde{f}_\nu (t,k,v) = - (q/m) F_0'(v) \int_{-\infty}^t e^{-(\nu +ikv)(t-s)}
    \tilde{E}(s,k)ds,
  \end{equation*}
  and this is the unique solution in $\Schwartz^\prime (\R^3)$. Upon inserting the
  full Fourier transform of $E(t,x)$, one can check that this
  gives~(\ref{eq:Landau-fnu-again}).  
\end{remark}

We apply now the limiting absorption principle, that is we consider the limit of
the distribution function $f_\nu$ for $\nu \to 0^+$.

\begin{proposition}
  \label{th:limit-real-space-Landau}
  For any $E \in \Schwartz(\R^2)$ and $F_0 \in \Schwartz(\R)$, the solution
  $f_\nu$ defined in~(\ref{eq:Landau-fnu-again}) has a pointwise limit for
  $\nu \to 0^+$ given by 
  \begin{equation*}
    f (t,x,v) = - (q/m) F'_0(v) \int_{-\infty}^t 
    E \big(s, x - v \cdot (t-s) \big) ds,
  \end{equation*}
  which is in $C^\infty_b(\R^3)$, with $f(t,x,\cdot ) \in \Schwartz(\R)$,
  and solves $\mathcal{L}_0 f = -(q/m) E F_0'$.
\end{proposition}

\begin{proof}
  We observe that for $\nu > 0$ and $s < t$ the function
  \begin{equation*}
    s \mapsto  e^{-\nu \cdot (t-s)} E \big(s, x - v \cdot (t-s) \big),
  \end{equation*}
  is bounded by 
  \begin{equation*}
    \big|E \big(s, x - v \cdot (t-s) \big)\big| \leq \frac{1}{(1+s^2)^m} 
    \sup_{t,x} \big|(1+t^2 + x^2)^m E(t,x)\big|,
  \end{equation*}
  for all $m\geq 0$. If we choose $m > 1/2$, then $ 1/(1+s^2)^m$ is integrable
  and by the dominated convergence theorem, for any $(t,x,v) \in \R^3$,
  \begin{equation*}
    f_\nu(t,x,v) \xrightarrow{\nu\to 0^+} f(t,x,v) \coloneqq -(q/m) F'_0(v) 
    \int_{-\infty}^t E \big(s, x - v \cdot (t-s) \big) ds.
  \end{equation*}
  We observe that the pointwise limit is the causal solution of linear advection
  equation $\mathcal{L}_0 f = -(q/m) F'_0 E$ in the sense of
  appendix~\ref{sec:causal-solutions} and the characteristic flow satisfies the
  hypothesis of proposition~\ref{th:example-flow}. Hence proposition~\ref{th:causal}
  gives $f \in C_b^\infty(\R^3)$. Since $f(t,x,v)$ is proportional to $F'_0(v)$,
  we have $f(t,x,\cdot) \in \Schwartz(\R)$. 
\end{proof}

\begin{remark} 
  We can deduce other properties of the solution $f$. 
  Since $f(t,x,\cdot)$ is rapidly decreasing, we also have 
  $f(t,x, \cdot) \in L^1(\R)$ as it should be (in view of its meaning as particle
  density). Continuity implies that $f(t, \cdot, \cdot)$ is in $L^1(K \times \R)$ 
  for every compact $K \subset \R$, and this is physically appropriate for such
  an idealized model, which, being spatially uniform, has an infinite number of
  particles: only the number of particles $\|f(t,\cdot,\cdot)\|_{L^1(K\times\R)}$ in a
  compact spatial domain $K$ has to be finite. 
\end{remark}

The pointwise limit obtained in proposition~\ref{th:limit-real-space-Landau} is
referred to as the response of the plasma to the perturbation $E$.

\subsection{Current density and conductivity operator} 
\label{sec:limiting-absorption-Landau}

We can now compute the electric current density via
equation~(\ref{eq:current-density}), namely, 
\begin{equation}
  \label{eq:Landau-jnu}
  j_\nu(t,x) = q \int_\R v f_\nu (t,x,v) dv,
\end{equation}
and $j_\nu \in \Schwartz(\R^2)$. The map $E \mapsto j_\nu = \sigma_\nu(E)$ 
defines a linear continuous operator 
$\sigma_\nu : \Schwartz(\R^2) \to \Schwartz(\R^2)$ which is given by the Fourier
multiplier 
\begin{equation}
  \label{sigma-nu}
  \hat{\jmath}_\nu(\omega,k) = \hat{\sigma}_\nu(\omega,k) \hat{E}(\omega, k), \quad
  \hat{\sigma}_\nu (\omega, k) = -i\frac{q^2}{m}\int_\R 
  \frac{v F'_0(v)}{\omega - k v + i\nu} dv.
\end{equation}
The continuity of $\sigma_\nu$ in particular follows from the estimate
\begin{equation*}
  \big| \partial_\omega^\alpha \partial_k^\beta \hat{\sigma}_\nu(\omega,k) \big| \leq
  \frac{C}{\nu^{\alpha+\beta+1}} \int_\R \big| v^{\beta+1} F'_0(v) \big| dv,
\end{equation*}
for any non-negative integers $\alpha,\beta$, where the constant $C$ depends
only on $q^2/m$, $\alpha$, and $\beta$. We observe that this estimate is not
uniform in $\nu$ as expected, since the sequence $f_\nu$ is not uniformly
bounded in $\Schwartz$. 

For the slightly more general case of non-homogeneous equilibria of the form 
$F_0(x,v) = n_0(x) \tilde{F}_0(v)$ with $n_0 \in C^\infty_b$, one can define
$\tilde{\sigma}_\nu :\Schwartz(\R^2) \to \Schwartz(\R^2)$ as the Fourier multiplier
with symbol 
\begin{equation}
  \label{eq:s-nu}
  \widehat{\tilde{\sigma}}_\nu(\omega,k) =
  -i\frac{q^2}{m}\int_\R 
  \frac{v \tilde{F}'_0(v)}{\omega - k v + i\nu} dv,
\end{equation}
and obtain the induced current
\begin{equation}
  \label{eq:generalized-jnu}
  \hat{\jmath}_\nu(\omega,k) = \widehat{\tilde{\sigma}}_\nu(\omega,k)
  \widehat{n_0 E}(\omega, k).
\end{equation}
In this case the conductivity operator is
\begin{equation*}
  j_\nu = \sigma_\nu(E) \coloneqq \tilde{\sigma}_\nu(n_0 E),
\end{equation*}
and it amounts to the pseudo-differential operator
\begin{equation}
  \label{eq:jnu_psido}
  j_\nu(t,x) =
  \frac{1}{(2\pi)^2} \int e^{-i\omega (t-t') + ik (x-x')}
  n_0(x') \widehat{\tilde{\sigma}}_\nu(\omega,k) E(t',x') dt'dx' d\omega dk,
\end{equation}
where the integral is in the sense of oscillatory integrals and the symbol of
the operator is
$\hat{\sigma}_\nu(x',\omega,k) = n_0(x') \widehat{\tilde{\sigma}}_\nu(\omega,k)$. 

By using the dominated convergence theorem we have that the limit of the
current density $j_\nu$ is equal to the current carried by the limit
distribution function $f$. Specifically we have

\begin{proposition}
  \label{th:Landau-limit-jnu}
  With $E \in \Schwartz(\R^2)$ and $F_0 \in \Schwartz(\R)$, the function
  defined by  
  \begin{equation*}
    j(t,x) = -\frac{q^2}{m} \int_{D_t} v F'_0(v) 
    E \big(s, x - v \cdot (t-s) \big) ds dv, 
  \end{equation*}
  with $D_t = (-\infty,t] \times \R$, belongs to $C_b^{\infty}(\R^2)$, hence to
  $\Schwartz^\prime$. The map $\sigma : E \mapsto j = \sigma(E)$ is a linear
  continuous operator from $\Schwartz(\R^2) \to \Schwartz^\prime(\R^2)$.
\end{proposition}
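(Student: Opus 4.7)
The plan is to change variables $\tau = t-s$ in the time integral so that both $j_\nu$ and $j$ become integrals over the fixed domain $[0,\infty)\times\R$, namely,
\begin{equation*}
j_\nu(t,x) = -\frac{q^2}{m}\int_0^\infty\int_\R e^{-\nu\tau}\, v F'(v)\, E\big(t-\tau, x-v\tau\big)\, dv\, d\tau,
\end{equation*}
and the analogous expression without the factor $e^{-\nu\tau}$ for $j$. All three assertions then reduce to controlled applications of the dominated convergence theorem. For the pointwise limit, I would fix $(t,x)$ and note that $(1-e^{-\nu\tau})\,v F'(v)\,E(t-\tau,x-v\tau)\to 0$ pointwise as $\nu\to 0^+$, uniformly dominated by $|v F'(v)|\,|E(t-\tau,x-v\tau)|$. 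The Schwartz decay of $E$ gives $|E(t-\tau, x-v\tau)| \leq C_m\|E\|_{2m}(1+(t-\tau)^2)^{-m}$ for every $m\in\N$, which for $m>1/2$ is integrable in $\tau$ on $[0,\infty)$, while $v F'(v)\in L^1(\R)$ since $F\in\Schwartz(\R)$. Dominated convergence then delivers $j_\nu(t,x)\to j(t,x)$.

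To show $j\in C^\infty_b(\R^2)$, I would differentiate under the integral sign: for any non-negative integers $\alpha,\beta$,
\begin{equation*}
\partial_t^\alpha \partial_x^\beta j(t,x) = -\frac{q^2}{m}\int_0^\infty\int_\R v F'(v)\, \partial_t^\alpha\partial_x^\beta E\big(t-\tau, x-v\tau\big)\, dv\, d\tau,
\end{equation*}
with the interchange justified by the same Schwartz bound now applied to $\partial^\gamma E\in\Schwartz(\R^2)$. The crucial step for boundedness uniform in $(t,x)$ is that, after the substitution $u=\tau-t$,
\begin{equation*}
\int_0^\infty \frac{d\tau}{(1+(t-\tau)^2)^m} \leq \int_\R \frac{du}{(1+u^2)^m} < \infty
\end{equation*}
for $m>1/2$, with bound independent of $t$. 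Combined with $\int_\R |v F'(v)|\,dv < \infty$, this yields $|\partial_t^\alpha\partial_x^\beta j(t,x)| \leq C_{\alpha,\beta}\|E\|_{k(\alpha,\beta)}$ uniformly on $\R^2$, proving in particular the $L^\infty$ bound $\|j\|_\infty \leq C\|E\|_{k}$.

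Continuity of the map $\sigma:\Schwartz(\R^2)\to\Schwartz'(\R^2)$ then follows at once: since $j\in C^\infty_b$ defines a tempered distribution by integration against test functions, for any $\varphi\in\Schwartz(\R^2)$ and any $M>3/2$ one has
\begin{equation*}
|\langle j,\varphi\rangle| \leq \|j\|_\infty\, \|\varphi\|_{L^1(\R^2)} \leq C\,\|E\|_k\, \|\varphi\|_{2M},
\end{equation*}
using $(1+t^2+x^2)^{-M}\in L^1(\R^2)$ to bound $\|\varphi\|_{L^1}\leq C'\|\varphi\|_{2M}$. The only mildly delicate step of the entire argument is securing the uniformity in $t$ of the $\tau$-integral coming from the Schwartz decay of $E$; this is the reason for recasting the problem via $\tau=t-s$ at the outset. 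Beyond this observation, the proof is routine Schwartz-class bookkeeping parallel to proposition~\ref{th:response-simple-case}.
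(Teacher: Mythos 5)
Your proof is correct and follows essentially the same route as the paper: dominated convergence using the Schwartz decay of $E$ in its time argument together with $vF'\in L^1(\R)$, then the uniform $L^\infty$ bound on $j$ and its derivatives to get both $j\in C_b^\infty$ and the continuity of $E\mapsto j$ as a map into $\Schwartz^\prime$. Your substitution $\tau=t-s$ is a minor but pleasant reorganization (the paper integrates over the $t$-dependent domain $D_t$, so its $t$-differentiation under the integral picks up powers of $v$ and, strictly, a boundary term at $s=t$ that it does not display; your fixed domain $[0,\infty)\times\R$ sidesteps both), yet the underlying estimates are identical.
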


\begin{proof}
  In the domain $D_t = (-\infty,t] \times \R$, we change variables to
  $s' = t-s$, $v' = v$ for $(s,v) \in D_t$, and thus
  $(s',v') \in [0,+\infty) \times \R$. Then
  \begin{equation*}
    j(t,x) = -\frac{q^2}{m} \int_0^{+\infty} \int_\R v F'_0(v) E(t-s, x-vs) dvds.
  \end{equation*}
  The integrand $\mathcal{E}(t,x,s,v) = v F'_0(v) E(t-s, x-vs)$ is such that
  \begin{equation*}
    \partial_t^\alpha \partial_x^\beta \mathcal{E}(t,x,s,v) = vF'_0(v)
    \partial_t^\alpha \partial_x^\beta E(t-s, x-vs) 
    \leq \frac{|vF'_0(v)|}{\big(1+(t-s)^2\big)^m} \|E\|_{\alpha + \beta + 2m}.
  \end{equation*}
  For any $t \in \R$, this upper bound belongs to $L^1(\R^2)$. By the dominated
  convergence theorem, we deduce that $j \in C_b^\infty(\R^2)$ and
  $|\partial_t^\alpha \partial_x^\beta j(t, x)| \leq C_m \|E\|_{\alpha + \beta + 2m}$.

  As $C_b^\infty(\R^2) \subset \Schwartz^\prime(\R^2)$ and
  $\|j\|_{L^\infty(\R^2)} \leq C \|E\|_{2m}$, one has that the map 
  $E \mapsto j$ from $\Schwartz \to \Schwartz^\prime$ is continuous.
\end{proof}

\begin{proposition}
  \label{th:Landau-limit-jnu-bis}
  When $\nu \to 0^+$, the sequence $j_\nu$ defined in equation~(\ref{sigma-nu})
  converges to $j$ both pointwise in $\R^2$ and in $\Schwartz^\prime(\R^2)$.
\end{proposition}

\begin{proof}
  With $\mathcal{E}(t,x,s,v) = v F'_0(v) E \big(s, x - v \cdot (t-s) \big)$
  as in the proof of proposition~\ref{th:Landau-limit-jnu}, we have
  \begin{equation*}
    j_\nu(t,x) - j(t,x) = \frac{q^2}{m} \int_{D_t} \big(1 - e^{-\nu(t-s)}\big)
    \mathcal{E}(t,x,s,v) dsdv.
  \end{equation*}
  Since $e^{-\nu (t-s)} \leq 1$ for $(s,v) \in D_t$, we have 
  \begin{equation*}
    \big(1 - e^{-\nu (t-s)}\big) \big|\mathcal{E}(t,x,s,v) \big| \leq 
    \frac{\|F_0\|_{2m_2+2} \|E\|_{2m_1}}{(1+s^2)^{m_1} (1+v^2)^{m_2}} 
  \end{equation*}
  and for $m_1, m_2 >1/2$ the bound is in $L^1$. The dominated convergence
  theorem then yields pointwise convergence:
  $\lim \big(j_\nu(t,x) - j(t,x)\big) = 0$ for all $(t,x) \in \R^2$. The same
  estimate also gives convergence in $\Schwartz^\prime(\R^2)$: for any test
  function $\phi \in \Schwartz(\R^2)$,
  \begin{equation*}
    \langle j_\nu - j, \phi \rangle =
    \frac{q^2}{m} \int_{\R^2} \int_{D_t} \big(1 - e^{-\nu(t-s)}\big)
    \mathcal{E}(t,x,s,v) dsdv\, \phi(t,x)\, dtdx,
  \end{equation*}
  and the integrand is bounded by
  $\|F_0\|_{2m_2+2} \|E\|_{2m_1} (1+s^2)^{-m_1} (1+v^2)^{-m_2} |\phi(t,x)|$
  which is integrable. Again the dominated convergence theorem allows us to pass
  to the limit in the integral and obtain $\langle j_\nu - j, \phi \rangle \to 0$.
\end{proof}

For an explicit calculation of the conductivity operator we consider the limit
$\nu \to 0^+$ in Fourier space. As a tempered distribution, $\hat{\jmath}_\nu$
acts on $\psi \in \Schwartz(\R^2)$ by
\begin{equation}
  \label{eq:Landau-jnu-hat}
  \langle \hat{\jmath}_\nu, \psi \rangle = 
  - i (q^2 / m) \int_{\R^2 \times \R} vF_0'(v)
  \frac{ \hat{E}(\omega, k) \psi(\omega,k)}{\omega - kv + i \nu}
  d\omega dk dv.
\end{equation}

We now want to pass to the limit for $\nu \to 0^+$. We use the Hilbert transform 
(appendix \ref{sec:calc-limits-proof}).

Let $G(v) = v F_0'(v)/n_0$ where $n_0>0$ is the uniform background plasma
density and let $\omega_p = \sqrt{4\pi q^2 n_0 /m}$ be the plasma frequency of the
considered species. For $(\omega,k)\in\R^2$, $k \not= 0$, let
\begin{equation}
  \label{eq:sigma-ph}
  \hat{\sigma}_{\ph} (\omega, k) \coloneqq - i \frac{\omega_p^2}{4\pi} 
  \frac{1}{k} \Big[ \pi\mathcal{H}(G)(\omega / k ) -i\pi G(\omega/k) \Big],
\end{equation}
where $\mathcal{H}(G)$ is the Hilbert transform of $G$.
The tensor $\hat{\sigma}_{\ph}$ is the same as the one obtained formally in the
physics literature. We have 
$4\pi i \omega \hat{\sigma}_{\ph}(\omega,k) = \omega_p^2 H(\omega/k)$ for $k \not=0$,
where the function $H \in C^{\infty}(\R)$ is given by
$H(z) = z[\pi\mathcal{H}(G)(z) - i\pi G(z)]$. Then we have
\begin{equation}
  \label{th:limit-current-1}
  \lim_{\nu \to 0^+} \hat{\sigma}_\nu = \hat{\sigma}_\ph
  \text{ pointwise in } (\omega, k) \in \R^2, \; k \not=0,
\end{equation}
since $\hat{\sigma}_\nu(\omega,k) = -i(\omega_p^2/4\pi) A_\nu(\omega,k)$,
where $A_\nu$ is defined in appendix~\ref{sec:calc-limits-proof}, and
equation~(\ref{th:limit-current-1}) follows from lemma~\ref{th:Anu-Bnu}. 

Let us introduce $\lambda >0$, a cut-off function $\chi \in C_0^\infty (\R)$, 
$\chi (z) = 1$  for $|z| \leq 1/2$ and $\supp\chi \subset (-1,1)$, and let
$\chi_\lambda (k) = \chi(\lambda k)$. The real number $\lambda$ can be
interpreted as a scale-length in the frequency domain. Then we define two
operators $\sigma_{\lambda,1-\chi}, \sigma_{\lambda,\chi} : \Schwartz(\R^2) \to
\Schwartz^\prime(\R^2)$ given by
\begin{equation}
  \label{eq:sigma-lambda-chi}
  \sigma_{\lambda,1-\chi}(E) \coloneqq \mathcal{F}^{-1} \big((1-\chi_\lambda)
  \hat{\sigma}_\ph \hat{E} \big), \qquad
  \sigma_{\lambda,\chi}(E) \coloneqq \sigma - \sigma_{\lambda,1-\chi}.
\end{equation}
We note that $\sigma_{\lambda,1-\chi}$ is continuous since it is the
composition of continuous operations, and we have shown in
proposition~\ref{th:Landau-limit-jnu} that $\sigma$ is continuous,
therefore $\sigma_{\lambda,\chi}$ is continuous.

\begin{proposition}
  \label{th:limit-current-2}
  The operators $\sigma$ and $\sigma_{\lambda,\chi}$ defined in proposition
  \ref{th:Landau-limit-jnu} and equation~(\ref{eq:sigma-lambda-chi}),
  respectively, are such that
  \begin{align*}
    \langle \mathcal{F}\big(\sigma_{\lambda,\chi} (E)\big), \hat{\psi} \rangle
    &= i \frac{\omega_p^2}{4\pi} \int_{\R^2} \chi_\lambda(k) G(v)
    \Big[\pi\mathcal{H}\big(\hat{E}\hat{\psi}(\cdot, k)\big)(kv) 
      + i \pi \hat{E}\hat{\psi} (kv,k) \Big] dk dv, \\
    \langle \mathcal{F}\big(\sigma (E)\big), \hat{\psi} \rangle
    &= i \frac{\omega_p^2}{4\pi} \int_{\R^2} G(v)
    \Big[\pi\mathcal{H}\big(\hat{E}\hat{\psi}(\cdot, k)\big)(kv) 
      + i \pi \hat{E}\hat{\psi} (kv,k) \Big] dk dv,    
  \end{align*}
  for all $E, \psi \in \Schwartz(\R^2)$.
\end{proposition}

\begin{proof}
  Let $G(v) = v F_0'(v) / n_0$, $\omega_p^2 = 4\pi q^2 n_0/m$, and for any
  $\phi \in \Schwartz(\R^2)$, let $A_\nu$ and $B_\nu$ be the integrals defined
  in appendix~\ref{sec:calc-limits-proof}. Let
  \begin{align*}
    I^\nu_{\lambda,1-\chi}(\phi) &\coloneqq - i \frac{\omega_p^2}{4\pi} \int_{\R^3}
    \big(1-\chi_\lambda (k)\big)  G(v) 
    \frac{ \phi(\omega, k) }{\omega - kv + i \nu}
    d\omega dk dv \\
    &= - i \frac{\omega_p^2}{4\pi} \int_{\R^2}
    \big(1-\chi_\lambda (k)\big) \phi(\omega,k) A_\nu(\omega,k) d\omega dk, \\
    I^\nu_{\lambda,\chi}(\phi) &\coloneqq - i \frac{\omega_p^2}{4\pi} \int_{\R^3}
    \chi_\lambda (k)  G(v) \frac{ \phi(\omega, k) }{\omega - kv + i \nu}
    d\omega dk dv \\
    &= - i \frac{\omega_p^2}{4\pi} \int_{\R^2} \chi_\lambda (k)  G(v) B_\nu(v,k) dkdv.
  \end{align*}
  Since $k \not= 0$ in the support of $1-\chi_\lambda$, the integrands have a
  pointwise limit as $\nu \to 0^+$ computed in lemma~\ref{th:Anu-Bnu}. In
  addition, lemma~\ref{th:Anu-Bnu} shows that the integrands are bounded by a
  function in $L^1$ uniformly in $\nu$. The dominated convergence theorem
  applies and we can pass to the limit $\nu \to 0^+$ in the integrals, obtaining
  \begin{align*}
    I^\nu_{\lambda,1-\chi}(\phi) &\to
    - i \frac{\omega_p^2}{4\pi} \int_{\R^2}
    \big(1-\chi_\lambda (k)\big) \phi(\omega,k)
    \frac{1}{k}\Big[\pi \mathcal{H}(G)(\omega/k)
      -i\pi G(\omega/k)\Big]d\omega dk, \\
    I^\nu_{\lambda,\chi} (\phi) &\to
    - i \frac{\omega_p^2}{4\pi} \int_{\R^2} 
    \chi_\lambda (k)  G(v) \Big[
      -\pi \mathcal{H}\big(\phi(\cdot,k) \big)(kv)
      - i\pi \phi(kv,k) \Big] dk dv.
  \end{align*}
  Particularly, $I^\nu_{\lambda,1-\chi}(\phi) \to \int \big(1-\chi_\lambda (k)\big)
  \hat{\sigma}_\ph(\omega,k) \phi(\omega,k) d\omega dk$.
  Then we have
  \begin{align*}
    \langle \hat{\jmath}_\nu, \hat{\psi} \rangle &=
    I^\nu_{\lambda,1-\chi}(\hat{E} \hat{\psi}) +
    I^\nu_{\lambda,\chi}(\hat{E} \hat{\psi}) 
    \xrightarrow{\nu \to 0^+}
    \big\langle \mathcal{F}\big(\sigma_{\lambda,1-\chi}(E)\big), \hat{\psi} \big\rangle\\
    &\qquad + i \frac{\omega_p^2}{4\pi} \int_{\R^2} \chi_\lambda(k) G(v)
    \Big[\pi\mathcal{H}\big(\hat{E}\hat{\psi}(\cdot, k)\big)(kv) 
      + i \pi \hat{E}\hat{\psi} (kv,k) \Big] dk dv.
  \end{align*}
  On the other hand we know from proposition~\ref{th:Landau-limit-jnu-bis} that
  as $\nu \to 0^+$, $j_\nu \to j = \sigma(E)$ in $\Schwartz^\prime$, hence
  \begin{equation*}
    \langle \hat{\jmath}_\nu, \hat{\psi} \rangle \xrightarrow{\nu \to 0^+}
    \big\langle \mathcal{F}\big(\sigma(E)\big), \hat{\psi} \big\rangle,
  \end{equation*}
  and by definition $\sigma(E) = \sigma_{\lambda,1-\chi}(E) + \sigma_{\lambda,\chi}(E)$.
  Uniqueness of the limit gives the claimed expression for $\sigma_{\lambda,\chi}$.

  The claimed expression for $\sigma(E)$ follows analogously on noting that
  \begin{equation*}
    \langle \hat{\jmath}_\nu, \hat{\psi} \rangle =
    I^\nu(\hat{E} \hat{\psi}) \coloneqq 
    - i \frac{\omega_p^2}{4\pi} \int_{\R^2} G(v) B_\nu(v,k) dkdv,
  \end{equation*}
  where the function $B_\nu$ is now computed with $\phi = \hat{E} \hat{\psi}$,
  i.e.,
  \begin{equation*}
    B_\nu(\omega,k) = \int_\R \frac{\hat{E}(\omega,k) \hat{\psi}(\omega,k)
    }{\omega - kv +i\nu} d\omega.
  \end{equation*}
  As $\nu \to 0^+$ the right-hand side converges to
  $\langle \mathcal{F}\big(\sigma (E)\big), \hat{\psi} \rangle$, while the limit
  of the left-hand side is dealt with as in the case of $I^\nu_{\lambda,\chi}$.
\end{proof}

The operator $\sigma_{\lambda, \chi}$ does not play any role when the electric
field perturbation is supported away from $k=0$, i.e., for non-constant
fields. More precisely we have the following result, which expresses the usual
Ohm's law for a uniform plasma. 

\begin{corollary}
  \label{th:physics-conductivity}
  If $E \in \Schwartz(\R^2)$ is such that $\hat{E} (\omega,k) = 0$ for
  $|k| \leq 1/\lambda$ , then $\hat{\jmath} \in C^\infty (\R^2)$ and
  $\hat{\jmath}(\omega,k) = \hat{\sigma}_{\mathrm{ph}} (\omega,k) \hat{E}(\omega,k)$.
\end{corollary}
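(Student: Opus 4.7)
The plan is to exploit the decomposition $\sigma = \sigma_{\lambda,1-\chi} + \sigma_{\lambda,\chi}$ from proposition~\ref{th:limit-current}(iii) and use the support hypothesis on $\hat E$ to kill the localized remainder. The key observation is that, since $\supp\chi \subset (-1,1)$, the cut-off $k\mapsto \chi(k\lambda)$ is supported in $|k|<1/\lambda$, while by hypothesis $\hat E(\omega,k) = 0$ for all $(\omega,k)$ with $|k|\leq 1/\lambda$. Hence $\chi(k\lambda)\hat E(\omega,k)\equiv 0$ on $\R^2$.

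Plugging this into the explicit formula for $\sigma_{\lambda,\chi}$ given in proposition~\ref{th:limit-current}(iii), the integrand
\[
  \chi(k\lambda) G(v)\Big[\pi\mathcal{H}\big(\hat E\psi(\cdot,k)\big)(kv) - i\pi \hat E \psi(kv,k)\Big]
\]
vanishes identically: for $|k|<1/\lambda$ the factor $\hat E(\cdot,k)$ is the zero function of $\omega$, so both the pointwise value $\hat E\psi(kv,k)$ and the Hilbert transform $\mathcal{H}(\hat E\psi(\cdot,k))(kv)$ vanish; for $|k|\geq 1/\lambda$ the factor $\chi(k\lambda)$ vanishes. Therefore $\sigma_{\lambda,\chi}(E) = 0$ as a tempered distribution and $j = \sigma_{\lambda,1-\chi}(E)$.

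By definition of $\sigma_{\lambda,1-\chi}$ we then have $\hat j(\omega,k) = (1-\chi(k\lambda))\hat\sigma_{\ph}(\omega,k)\hat E(\omega,k)$. On the support of $\hat E$, namely $|k|>1/\lambda$, one has $\chi(k\lambda)=0$, so the factor $1-\chi(k\lambda)$ equals $1$ wherever $\hat E$ is nonzero. Consequently $\hat j(\omega,k) = \hat\sigma_{\ph}(\omega,k)\hat E(\omega,k)$ as an equality of tempered distributions, which is the stated Ohm's law.

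Finally, to obtain $\hat j \in C^\infty(\R^2)$, I would use that $\hat\sigma_{\ph}$ is smooth away from $k=0$: indeed $G\in\Schwartz(\R)$ implies $\mathcal{H}(G)\in C^\infty(\R)$ by proposition~\ref{th:Hilbert-transform}, and the maps $(\omega,k)\mapsto \omega/k$ and $(\omega,k)\mapsto 1/k$ are smooth on $\{k\neq 0\}$, so $\hat\sigma_{\ph}\in C^\infty(\R\times(\R\setminus\{0\}))$. Since $\hat E\in\Schwartz(\R^2)$ vanishes in the open strip $\{|k|\leq 1/\lambda\}$, which is an open neighborhood of the singular set $\{k=0\}$ of $\hat\sigma_{\ph}$, the product $\hat\sigma_{\ph}\hat E$ extends by zero to a smooth function on all of $\R^2$. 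The only subtlety worth double-checking is that the vanishing of $\sigma_{\lambda,\chi}(E)$ established in proposition~\ref{th:limit-current}(iii) truly identifies $\hat j$ with the Fourier multiplier everywhere and not merely off the support of $\chi(\cdot\lambda)$; this is immediate from the identity $\sigma = \sigma_{\lambda,1-\chi}+\sigma_{\lambda,\chi}$ holding in $\Schwartz'(\R^2)$.
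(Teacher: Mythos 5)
Your proposal is correct and follows essentially the same route as the paper: both kill the remainder $\sigma_{\lambda,\chi}(E)$ by observing that $\chi(k\lambda)\hat{E}(\omega,k)\equiv 0$ under the support hypothesis, then identify $\hat{j}$ with $\big(1-\chi(k\lambda)\big)\hat{\sigma}_{\ph}\hat{E}=\hat{\sigma}_{\ph}\hat{E}$, and deduce smoothness from the regularity of $\mathcal{H}(G)$ (your added remark that $\hat{E}$ vanishes on a neighborhood of the singular set $\{k=0\}$ just makes explicit what the paper leaves implicit). The only cosmetic difference is that you verify the vanishing of $\sigma_{\lambda,\chi}(E)$ on the limiting formula of proposition~\ref{th:limit-current}(iii), whereas the paper checks it on the $\nu$-regularized integral before passing to the limit; both are valid.
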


\begin{proof}
  By hypothesis $\chi_\lambda(k) \hat{E}(\omega, k) = 0$ for all 
  $(\omega,k) \in \R^2$, hence, $\sigma_{\lambda, \chi} (E) =0$; this follows
  directly from the expression given in proposition~\ref{th:limit-current-2}
  since, in particular, $\chi_\lambda(k)
  \mathcal{H}\big(\hat{E}\hat{\psi}(\cdot, k)\big)(kv) =
  \mathcal{H}\big(\chi_\lambda \hat{E}\hat{\psi}(\cdot, k)\big)(kv)$. Then
  \begin{equation*}
    \mathcal{F}\big(\sigma(E)\big) = \mathcal{F}\big(\sigma_{\lambda, 1-\chi}(E)\big)
    = (1 - \chi_\lambda) \hat{\sigma}_{\mathrm{ph}} \hat{E}
    = \hat{\sigma}_{\mathrm{ph}} \hat{E},
  \end{equation*}
  since $(1 - \chi_\lambda) \hat{E} = \hat{E}$.
  The fact that $\hat{\jmath}$ is in $C^\infty$ follows from the properties of the
  Hilbert transform summarized in proposition~\ref{th:Hilbert-transform}, that
  imply in particular, $\mathcal{H}(G) \in H^\infty (\R)$.
\end{proof}

\begin{remark}
  \label{rem:PsiDO}
  In the case of non-homogeneous equilibria of the form 
  $F_0(x,v) = n_0(x) \tilde{F}_0(v)$, the statement of
  corollary~\ref{th:physics-conductivity} remains true with $G$ and $\hat{E}$
  replaced by $v \tilde{F}_0(v)$ and $\widehat{n_0 E}$,
  respectively. Particularly, one has
  \begin{equation*}
    \mathcal{F}\big(\sigma_{\lambda, 1-\chi} (E) \big) (\omega,k) = 
    \big(1 - \chi_\lambda (k)\big) \widehat{\tilde{\sigma}}_{\mathrm{ph}} (\omega,k) 
    \widehat{n_0E}(\omega,k),
  \end{equation*}
  where $\widehat{\tilde{\sigma}}_{\mathrm{ph}}$ is obtained from~(\ref{eq:s-nu}) in
  analogy with $\hat{\sigma}_{\mathrm{ph}}$. Then, the operator
  \begin{multline*}
    \sigma_{\lambda, 1-\chi} (E)(t,x) 
    = \frac{1}{(2\pi)^2} \int e^{-i \omega (t-t') + i k (x-x')}\\
    \times [n_0(x') \big(1 -\chi_\lambda(k) \big)
      \widehat{\tilde{\sigma}}_{\mathrm{ph}}(\omega, k)]  
    E(t',x')dt'dx' d\omega dk
  \end{multline*}
  is pseudo-differential with symbol
  $n_0(x') \big(1 -\chi_\lambda(k) \big)
  \widehat{\tilde{\sigma}}_{\mathrm{ph}}(\omega, k)$, 
  recovering an expression similar to~(\ref{eq:jnu_psido}).   
\end{remark}

\subsection[Proofs for the results of section~\ref{sec:main-1}]{%
  \for{toc}{Proofs for the results of section~\ref{sec:main-1}}%
  \except{toc}{%
  Proof of the main results for the non-magnetized non-relativistic 
  one-dimensional case (section~\ref{sec:main-1})}%
}

We collect at last the partial results of this section and give the proofs of
the two theorems stated in section~\ref{sec:main-1}.

\begin{proof}[Proof of theorem~\ref{th:main-1-1}]
  (i) The fact that $f_\nu$ belongs to $\Schwartz(\R^3)$ and is the unique 
  solution of equation~(\ref{eq:Landau-damped}) in $\Schwartz^\prime(\R^3)$ is
  proven in proposition~\ref{th:reg-problem}. The current density $j_\nu$ and the
  operator $\sigma_\nu$ are given in equation~(\ref{eq:Landau-jnu})
  and~(\ref{sigma-nu}) and related comments.
  
  (ii) and (iii) Pointwise convergence $f_\nu \to f$ is established in
  proposition~\ref{th:limit-real-space-Landau} and an expression for the solution
  $f$ is given there. In proposition~\ref{th:limit-real-space-Landau}, it is
  also proven that $f(t,x, \cdot) \in \Schwartz(\R)$ for every $(t,x) \in\R^2$.
  As for the convergence of $f_\nu \to f$ in the topology of $\Schwartz^\prime$,
  proposition~\ref{th:limit-real-space-Landau} establishes pointwise (but not
  uniform) convergence $f_\nu \to f$ with limit $f \in C^\infty_b$; in addition,
  for any integer $m \geq 1/2$, we have
  \begin{equation*}
    \big|f_\nu(t,x,v) \big| \leq \| qF_0'/m\|_0 \cdot \|E\|_{2m}
    \int_{-\infty}^{+\infty} \frac{ds}{(1+s^2)^m},
  \end{equation*}
  uniformly in $\nu \in [0, +\infty)$. Therefore for every
  $\varphi \in \Schwartz(\R^3)$, the function $(f_\nu - f)\varphi$ satisfies
  \begin{equation*}
    \big|f_\nu(t,x,v) - f(t,x,v)\big| \big|\varphi(t,x,v)\big| \leq
    C \big|\varphi(t,x,v)\big|.
  \end{equation*}
  As $|\varphi| \in L^1$, $(f_\nu - f)\varphi$ satisfies the hypothesis of the
  dominated convergence theorem and 
  \begin{equation*}
    \langle f_\nu - f, \phi \rangle
    = \int_{\R^3} (f_\nu - f) \phi dtdxdv \to 0, \quad
    \text{for all $\phi \in \Schwartz(\R^3)$.}
  \end{equation*}
  In proposition~\ref{th:Landau-limit-jnu}, it is shown that
  $j = \mathbb{J}(f) \in C^\infty_b(\R^2)$,
  and proposition~\ref{th:Landau-limit-jnu-bis} establishes the limit
  $j_\nu \to j$ both pointwise and in $\Schwartz^\prime(\R^2)$.
\end{proof}

\begin{proof}[Proof of theorem~\ref{th:main-1-2}]
  Proposition \ref{th:Landau-limit-jnu} also establishes the continuity of
  the operator $\sigma: E \mapsto j$. The relation to the physical conductivity
  operator is proven in corollary~\ref{th:physics-conductivity}.
\end{proof}

%

\section[Study of an auxiliary problem]{\for{toc}{PDE of the form 
    \texorpdfstring{$-(u_1 \partial_{u_2} - u_2 \partial_{u_1})\varphi(\theta,u)
      - ia(\theta,u) \varphi(\theta,u) = \psi(\theta,u)$}{}  
  }%
  \except{toc}{Study of a PDE with parameters of the form 
    \texorpdfstring{$-(u_1 \partial_{u_2} - u_2 \partial_{u_1})\varphi(\theta,u)
      - ia(\theta,u) \varphi(\theta,u) = \psi(\theta,u)$}{}
  }%
}
\label{sec:auxiliary-pde}

In this section we establish existence and uniqueness results for a partial
differential equation with parameters that arises in the study of the
relativistic, linear Vlasov equation with uniform magnetic field $B_0$, 
addressed below in section~\ref{sec:uniform}. Specifically the equation is 
\begin{equation}
  \label{eq:psi-psi-eq}
  -(u_1 \partial_{u_2} - u_2 \partial_{u_1})\varphi (\theta,u)
  - ia(\theta,u) \varphi (\theta,u) = \psi(\theta,u),
\end{equation}
where $a, \psi \in C^\infty(\R^l \times \R^3)$ are given complex-valued
functions of $u\in\R^3$ and depend on parameters $\theta \in \R^l$.
The operator $-(u_1 \partial_{u_2} - u_2 \partial_{u_1})$
originates from the Lorentz force term $q(v \times B_0) \cdot \nabla_p$ with
$q>0$ and $B_0$ constant and directed along the third axis.
Eventually, the parameters $\theta$ will be related to the Fourier variables
$(\tau,\xi)$, and $a$ to $a_\eps$ defined in equation~(\ref{eq:a-and-bs}) of
section~\ref{sec:uniform} below. Therefore, we assume that $a$ satisfies a
condition similar to (\ref{eq:nu-conditions}), that is,
\begin{subequations}
  \label{eq:general-a-condition}
  \begin{equation}
    \label{eq:minimal-a-condition}
    a \in C^\infty(\R^l\times \R^3,\C), \quad
    (u_1 \partial_{u_2} - u_2 \partial_{u_1}) a(\theta,u)=0,
    \quad
    \big| \imag a(\theta,u) \big| \geq \eta >0,
  \end{equation}
  for a given constant $\eta > 0$. As we need to control the growth of
  derivatives of the solution at infinity, we shall also assume that
  \begin{equation}
    \label{eq:growth-a-condition}
    |\partial^\alpha_\theta \partial_u^\beta a(\theta,u))| \leq
    C_{\alpha,\beta} (1+\theta^2+u^2)^m,
    \quad \forall \alpha \in \N_0^l,
    \quad \forall \beta \in \N_0^3,
  \end{equation}
  uniformly in $(\theta,u) \in \R^l \times \R^3$ for a given $m \in \R$ and with
  constants $C_{\alpha,\beta} >0$ depending on the multi-indices. 
\end{subequations}

First we establish the uniqueness of the solution under rather general
conditions. 

\begin{lemma}
  \label{th:phi-uniqueness}
  Let $a \in C^\infty(\R^l \times \R^3)$ satisfy
  condition~(\ref{eq:minimal-a-condition}), $\Theta \subseteq \R^l$ be an open
  set, and let $\varphi \in L^2_{\mathrm{loc}}(\Theta \times \R^3)$ be a function with 
  weak derivatives $\partial_{u_1} \varphi, \partial_{u_2} \varphi \in
  L^2_{\mathrm{loc}}(\Theta \times \R^3)$ and such that
  \begin{equation*}
    -(u_1 \partial_{u_2} - u_2 \partial_{u_1})\varphi -i a \varphi = 0,
    \quad \text{a.e. in } \Theta \times \R^3.
  \end{equation*}
  Then, $\varphi = 0$ a.e. in $\Theta \times \R^3$.
\end{lemma}

\begin{proof}
  For almost all $(\theta,u_3) \in \Theta \times \R$, the function
  $\tilde{\varphi}(u_1,u_2) \coloneqq \varphi(\theta,u_1,u_2,u_3)$ belongs to
  $H^1\big(B_{r}(0)\big)$ for every $r > 0$, where $B_r(0) \subset \R^2$ is the
  open ball of radius $r$ and centered in zero in $\R^2$. From the equation we deduce
  \begin{equation*}
    -(u_1 \partial_{u_2} - u_2 \partial_{u_1}) |\tilde{\varphi}|^2 +
    2 \imag(a) |\tilde{\varphi}|^2 = 0.
  \end{equation*}
  The first term amounts to the divergence of the vector field
  $(-u_2,u_1) |\tilde{\varphi}|^2$ which is tangent to $\partial B_r(0)$, hence
  Gauss theorem for the divergence, which holds for $H^1$ functions, gives
  \begin{equation*}
    0 = \int_{B_r(0)} (u_1 \partial_{u_2} - u_2 \partial_{u_1})
    |\tilde{\varphi}|^2 du_1 du_2 = 2 \int_{B_r(0)} \imag{a}\;
    |\tilde{\varphi}|^2 du_1 du_2,
  \end{equation*}
  for every radius $r > 0$. We can now conclude upon accounting for
  hypotheses~(\ref{eq:minimal-a-condition}). If $\imag a \geq \eta >0$, we have
  \begin{equation*}
    0 \leq \eta \int_{B_r(0)} |\tilde{\varphi}|^2 du_1 du_2 \leq
    \int_{B_r(0)} \imag a\; |\tilde{\varphi}|^2 du_1 du_2 = 0.
  \end{equation*}
  If instead $-\imag a \geq \eta > 0$,
  \begin{equation*}
    0 \leq \eta \int_{B_r(0)} |\tilde{\varphi}|^2 du_1 du_2 \leq
    -\int_{B_r(0)} \imag a\; |\tilde{\varphi}|^2 du_1 du_2 = 0.
  \end{equation*}
  In both cases we deduce
  \begin{equation*}
    \int_{B_r(0)} |\tilde{\varphi}|^2 du_1 du_2 = 0,
  \end{equation*}
  and thus $\tilde{\varphi} = 0$ a.e. in $B_r(0)$ for all $r$ and for
  almost all $(\theta,u_3) \in \Theta \times \R$.
  It follows that $\varphi=0$ a.e. in $\Theta \times \R^3$.
\end{proof}

In the remaining part of this section, we first give an existence result for the
case in which the source term $\psi$ is a polynomial in $(u_1,u_2)$; this is
based on an algebraic argument. Then, we prove the existence of a smooth solution
$\varphi \in C^\infty$ when $\psi \in C^\infty$ and of a solution
$\varphi \in \Schwartz$ when $\psi \in \Schwartz$. The latter implies
uniqueness of the solution in $\Schwartz^\prime$.

\subsection{Equation with a polynomial source term}
\label{sec:equat-with-polynomial-rhs}

Let the source term in equation~(\ref{eq:psi-psi-eq}) be a polynomial of the form
\begin{equation}
  \label{eq:psi-polynomial}
  \psi(\theta, u) = \sum_{0 \leq m+n \leq L} \mathcal{Y}_{m,n}(\theta, u_3)  u_1^m u_2^n,
  \quad \mathcal{Y}_{m,n} \in C^\infty,
\end{equation}
and let us consider for $z \in \C\setminus \Z$ the equation
\begin{equation}
  \label{eq:psi-psi-z}
  -(u_1 \partial_{u_2} - u_2 \partial_{u_1})\tilde{\varphi}(z;\theta,u)
  - iz \tilde{\varphi}(z;\theta,u) = \psi(\theta,u).
\end{equation}
We can search for solutions of the form
\begin{equation}
  \label{eq:phi-polynomial}
  \tilde{\varphi}(z; \theta,u)
  = \sum_{0 \leq m+n \leq L} \mathcal{X}_{m,n} (z;\theta, u_3) u_1^m u_2^n,
\end{equation}
that is, a polynomial with at most the same degree as the source
term. Substitution into (\ref{eq:psi-psi-z}) yields
\begin{multline*}
  -\sum_{m=1} \sum_{n=0} (n+1) \mathcal{X}_{m-1,n+1} u_1^m  u_2^n
  +\sum_{m=0} \sum_{n=1} (m+1) \mathcal{X}_{m+1,n-1} u_1^m u_2^n \\
  - iz \sum_{m=0} \sum_{n=0} \mathcal{X}_{m,n} u_1^m u_2^n
  = \sum_{m=0} \sum_{n=0} \mathcal{Y}_{m,n} u_1^m u_2^n,
\end{multline*}
where the sums are all finite since $m+n\leq L$. We observe that only the
coefficients $\mathcal{X}_{m,n}$ with $m + n = \ell$ for
$\ell = 0,1,2,\ldots$ are coupled. For every integer $0 \leq \ell \leq L$,
we define
\begin{equation*}
  x_\ell = (\mathcal{X}_{\ell-j,j})_{j=0}^\ell, \qquad
  y_\ell = (-i\mathcal{Y}_{\ell-j,j})_{j=0}^\ell, 
\end{equation*}
then the linear equation for the coefficients splits into $(\ell+1)$-dimensional
blocks of the form
\begin{equation}
  \label{eq:linear-algebraic-eq}
  (A_\ell - z) x_\ell = y_\ell, \qquad 0 \leq \ell \leq L,
\end{equation}
where the matrix $A_\ell \in \C^{(\ell+1)\times(\ell+1)}$ is defined and given in
appendix~\ref{sec:matrices}. For each $\ell$, equation~(\ref{eq:linear-algebraic-eq})
has a unique solution when $z$ is not an eigenvalue of the matrix $A_\ell$.
Lemma~\ref{th:matrix-Ak} shows that the spectrum of $A_\ell$ is given by
$\{2s - \ell :\; s = 0,1,\ldots,\ell\}$ and it is  contained in the set of
relative integers $\Z$ for any $\ell$. Hence, if $z \in \C \setminus \Z$
equation~(\ref{eq:linear-algebraic-eq}) has a unique solution for all $\ell$. 

\begin{lemma}
  \label{th:psi-psi-lemma}
  If $z \in \C\setminus \Z$ and $\psi$ is given by (\ref{eq:psi-polynomial}),
  equation~(\ref{eq:psi-psi-z}) has a solution which is of the
  form~(\ref{eq:phi-polynomial}) with  
  $\mathcal{X}_{m,n} (\cdot;\theta, u_3)$ analytic in $\C \setminus \Z$, 
  and $\mathcal{X}_{m,n} (z;\cdot) \in C^\infty(\R^l \times \R)$.
\end{lemma}

\begin{proof}
  Lemma \ref{th:matrix-Ak} establishes that the matrix $A_\ell$ is
  diagonalizable with eigenvalues $2s - \ell$, $s=0,\ldots, \ell$. We denote by
  $S$ and $T = S^{-1}$ the matrices (explicitly given in the proof of
  lemma~\ref{th:matrix-Ak}) such that $T A_\ell S$ is diagonal.
  Then, for $z \in \C \setminus \Z$, $A_\ell - z$ is invertible and
  equation~(\ref{eq:linear-algebraic-eq}) has a unique solution
  $x_\ell = (\mathcal{X}_{m,n})_{m+n=\ell}$ given by
  \begin{equation*}
    \mathcal{X}_{\ell-j,j}(z;\theta,u_3) =
    \sum_{r,s=0}^\ell \frac{S_{jr} T_{rs}}{2r-\ell -z}
    \mathcal{Y}_{\ell-s,s}(\theta,u_3),
  \end{equation*}
  which is analytic in $z \in \C \setminus \Z$, and $C^\infty$ in
  $(\theta,u_3)$. 
\end{proof}

We can now use $\tilde{\varphi}$ to construct the unique solution
of~(\ref{eq:psi-psi-eq}). 

\begin{proposition}
  \label{th:psi-psi-polynomial}
  Let $a \in C^\infty(\R^l \times \R^3,\C)$ satisfy the
  condition~(\ref{eq:minimal-a-condition}), $\psi$ be given in the form
  (\ref{eq:psi-polynomial}), $\tilde{\varphi}$ be the solution
  established in lemma~\ref{th:psi-psi-lemma}, and let
  \begin{equation*}
    \varphi (\theta,u) \coloneqq \tilde{\varphi}\big( a(\theta,u); \theta,u).
  \end{equation*}
  Then $\varphi \in C^\infty(\R^l \times \R^3)$ is the unique solution
  of (\ref{eq:psi-psi-eq}). 
\end{proposition}
  
\begin{proof}
  The fact that $\varphi \in C^\infty$ is a solution follows by
  equation~(\ref{eq:psi-psi-z}) and assumption~(\ref{eq:minimal-a-condition})
  which in particular implies
  \begin{equation*}
    -(u_1 \partial_{u_2} - u_2 \partial_{u_1}) \varphi(\theta,u) = 
    -(u_1 \partial_{u_2} - u_2 \partial_{u_1}) 
    \tilde{\varphi}(z;\theta,u)|_{z=a(\theta,u)}.
  \end{equation*}
  Uniqueness has been proven in lemma~\ref{th:phi-uniqueness}. 
\end{proof}

\subsection{Equation with source term in \texorpdfstring{$\Schwartz$}{} and
  uniqueness in \texorpdfstring{$\Schwartz^\prime$}{}.} 
\label{sec:equation-with-source-in-S}

If $\varphi \in C^1$ is a solution of~(\ref{eq:psi-psi-eq}) and
$(u_\perp,\phi) \in \R_+ \times [0,2\pi]$ are polar coordinates defined by 
$u_1 = u_\perp \cos\phi$, $u_2 = - u_\perp \sin\phi$, then the functions 
\begin{align*}
  U(r,\phi) &= \varphi(\theta, u_\perp \cos\phi, - u_\perp \sin\phi,u_3),
  \\
  V(r,\phi) &= \psi(\theta, u_\perp \cos\phi, - u_\perp \sin\phi, u_3),
\end{align*}
with parameter $r = (\theta, u_\perp, u_3)$ satisfy
\begin{equation}
  \label{eq:ode}
  \partial_\phi U(r,\phi) - i\tilde{a}(r) U(r,\phi) = V(r,\phi),
  \quad U(r,0) = U(r,2\pi),
\end{equation}
with
\begin{equation*}
  \tilde{a}(r) = a(\theta, u_\perp \cos\phi, - u_\perp \sin\phi, u_3),
\end{equation*}
which is independent of $\phi$ because of condition~(\ref{eq:minimal-a-condition}).

\begin{remark}
  \label{rem:orientation-phi}
  The choice of the angle $\phi$, in the clockwise direction, is unusual for
  polar coordinates. This is motivated by the fact that, with this definition,
  $\phi$ increases in the direction of gyration of a positively charged particle
  under the Lorentz force.  
\end{remark}

For smooth solutions $\varphi \in C^\infty$ of (\ref{eq:psi-psi-eq}),
we find that the derivatives
$\partial^\alpha_\theta \partial_u^\beta \varphi$ with the same order of
differentiation in $(u_1,u_2)$ are related to the solution of an ordinary
differential equation analogous to~(\ref{eq:ode}). In fact, differentiating
equation~(\ref{eq:psi-psi-eq}) yields
\begin{multline*}
  \partial_\theta^\alpha \partial_u^\beta [
    -(u_1 \partial_{u_2} - u_2 \partial_{u_1}) \varphi] =
  -(u_1 \partial_{u_2} - u_2 \partial_{u_1})
  (\partial_\theta^\alpha \partial_u^\beta \varphi) \\
  -\beta_1 \partial_\vartheta^\alpha \partial_{u_1}^{\beta_1-1}
  \partial_{u_2}^{\beta_2+1} \partial_{u_3}^{\beta_3} \varphi
  + \beta_2 \partial_\vartheta^\alpha \partial_{u_1}^{\beta_1+1}
  \partial_{u_2}^{\beta_2-1} \partial_{u_3}^{\beta_3} \varphi,
\end{multline*}
for any multi-index $\alpha \in \N_0^l$ and
$\beta = (\beta_1,\beta_2,\beta_3) \in \N_0^3$. This can be shown either
directly using the identities
\begin{align*}
  \partial_{u_1}^{\beta_1} \partial_{u_2}^{\beta_2} (u_1 \partial_{u_2} \varphi)
  &= u_1 \partial_{u_1}^{\beta_1} \partial_{u_2}^{\beta_2+1} \varphi + \beta_1
  \partial_{u_1}^{\beta_1-1} \partial_{u_2}^{\beta_2+1} \varphi, \\
  \partial_{u_1}^{\beta_1} \partial_{u_2}^{\beta_2} (u_2 \partial_{u_1} \varphi)
  &= u_2 \partial_{u_1}^{\beta_1+1} \partial_{u_2}^{\beta_2} \varphi + \beta_2
  \partial_{u_1}^{\beta_1+1} \partial_{u_2}^{\beta_2-1} \varphi,
\end{align*}
or by induction over $\beta_1$ and $\beta_2$.
Therefore, the $(\ell+1)$-dimensional complex-vector-valued function defined by
\begin{equation}
  \label{eq:pde-derivatives-var}
  \varphi_{\alpha,\beta,\ell}(\theta,u) = \big(
  \partial_\theta^\alpha \partial_u^\beta \varphi(\theta,u)
  \big)_{\beta_1+\beta_2=\ell}
  = \big(
  \partial_\theta^\alpha \partial_{u_1}^{\ell-j} \partial_{u_2}^j \partial_{u_3}^{\beta_3}
  \varphi(\theta,u) \big)_{j=0}^\ell,
\end{equation}
satisfies the system of partial differential equations
\begin{equation}
  \label{eq:pde-derivatives}
  - (u_1 \partial_{u_2} - u_2 \partial_{u_1}) \varphi_{\alpha,\beta,\ell}
  -i(a + \transpose{A}_\ell) \varphi_{\alpha,\beta,\ell} = \psi_{\alpha,\beta,\ell},
\end{equation}
where $A_\ell$ are the same matrices introduced in
equation~(\ref{eq:linear-algebraic-eq}) and studied in 
appendix~\ref{sec:matrices}, and the right-hand side is the
$(\ell+1)$-dimensional-vector-valued function
\begin{equation}
  \label{eq:pde-derivatives-rhs}
  \psi_{\alpha,\beta,\ell} = \Big(\partial_\theta^\alpha\partial_u^\beta \psi + i
  \sum_{\alpha' < \alpha} \sum_{\beta' < \beta} \binom{\alpha}{\alpha'}
  \binom{\beta}{\beta'} (\partial_\theta^{\alpha-\alpha'}
  \partial_u^{\beta-\beta'} a) (\partial_\theta^{\alpha'} \partial_u^{\beta'}
  \varphi)\Big)_{j=0}^{\ell},
\end{equation}
with $\beta= (\beta_1,\beta_2,\beta_3)$, 
$\beta_1 = \ell-j$ and $\beta_2 = j$, $j=0,\ldots,\ell$.
For $(u_1,u_2) \not= (0,0)$, let
\begin{align*}
  U_{\alpha,\beta,\ell}(r,\phi) &=
  \varphi_{\alpha,\beta,\ell}(\theta,u_\perp \cos\phi, - u_\perp \sin\phi,u_3), \\
  V_{\alpha,\beta,\ell}(r,\phi) &=
  \psi_{\alpha\beta,\ell}(\theta,u_\perp \cos\phi, - u_\perp \sin\phi,u_3).
\end{align*}
Then, if $\varphi \in C^\infty$ is a solution of (\ref{eq:psi-psi-eq}),
necessarily it must hold that
\begin{equation}
  \label{eq:ode-derivatives}
  \left\{
  \begin{aligned}
    \partial_\phi U_{\alpha,\beta,\ell}(r,\phi) - i\big(
    \tilde{a}(r) + \transpose{A}_\ell \big)
    U_{\alpha,\beta,\ell}(r,\phi) &= V_{\alpha,\beta,\ell}(r,\phi), \\
    U_{\alpha,\beta,\ell}(r,0) &= U_{\alpha,\beta,\ell}(r,2\pi).
  \end{aligned}
  \right.
\end{equation}
For $\ell=0$ the matrix $A_\ell$ reduces to $A_0=0$, hence
equation~(\ref{eq:ode}) is a special case of (\ref{eq:ode-derivatives}) obtained
for $\alpha=\beta=0$ and $\ell=0$. In general, 
equation~(\ref{eq:ode-derivatives}) is a system of $\ell+1$ first-order
ordinary differential equations on $[0,2\pi]$ with periodic boundary conditions,
for which we have the following result.

\begin{proposition}
  \label{th:ode-existence-main}
  For $\ell \in \N_0$, let $M_\ell$ be a $(\ell+1) \times (\ell +1)$
  diagonalizable, complex matrix with eigenvalues
  $\lambda_{\ell,j} \in \C \setminus \Z$, $j=0,1,\ldots,\ell$.
  Then, for any function
  $V_\ell \in C^\infty([0,2\pi],\C^{\ell+1})$ satisfying $V_\ell(0)=V_\ell(2\pi)$,
  there exists a unique solution $U_\ell \in C^\infty([0,2\pi],\C^{\ell+1})$ of
  \begin{equation*}
    U_\ell'(\phi) - iM_\ell U_\ell(\phi) = V_\ell(\phi), \qquad
    U_\ell(0) = U_\ell(2\pi),
  \end{equation*}
  given in Fourier series by
  \begin{equation*}
    U_\ell(\phi) = \sum_{n \in \Z} [i(M_\ell - n)^{-1} \hat{V}_{\ell,n}] e^{+in\phi},
    \qquad
    \hat{V}_{\ell,n} = \frac{1}{2\pi}\int_0^{2\pi} V_\ell(\phi) e^{-in\phi} d\phi,
  \end{equation*}
  and if $\imag \lambda_{\ell,j} \not = 0$, the solution satisfies, for all
  $\phi \in [0,2\pi]$, 
  \begin{equation*}
    |U_\ell(\phi)|_\infty \leq \frac{\kappa_\ell}{\lambda_{\ell,\mathrm{m}}}
    \max_{\phi' \in [0,2\pi]} |V_\ell(\phi')|_\infty,
  \end{equation*}
  where, for $z = (z_0,z_1,\ldots,z_\ell)\in\C^{\ell+1}$,
  $|z|_\infty \coloneqq \max_j|z_j|$ is the $L^\infty$ norm in $\C^{\ell+1}$,
  $\kappa_\ell$ is a constant depending only on $M_\ell$, and
  $\lambda_{\ell,\mathrm{m}} = \min_j |\imag \lambda_{\ell,j}|$.
\end{proposition}

\begin{proof}
  Since $V_\ell \in C^\infty([0,2\pi],\C^{\ell+1})$,
  for any $\mu \in \N_0$, the Fourier coefficients satisfy
  $|n|^\mu |\hat{V}_{\ell,n}|_\infty \leq \max_{\phi'} |\partial_\phi^\mu V_\ell
  (\phi')|_\infty$, hence the corresponding Fourier series converges in
  $C^k([0,2\pi],\C^{\ell+1})$ for every $k \in \N_0$. Dini's test implies that
  the sum of the Fourier series is equal to $V_\ell$, that is, $V_\ell$ can be
  represented by a Fourier series.
  
  Analogously a function $U_\ell \in C^1([0,2\pi],\C^{\ell+1})$ can be represented
  by a Fourier series, with convergence in $C^1([0,2\pi],\C^{\ell+1})$ and it is
  a solution if and only if the Fourier coefficients $\hat{U}_{\ell,n}$ satisfy 
  \begin{equation*}
    (M_\ell - n) \hat{U}_{\ell,n} = i \hat{V}_{\ell,n}.
  \end{equation*}
  As it was assumed that $M_\ell$ is diagonalizable, that is, there exists a
  non-singular complex matrix $S_\ell$ such that
  $S_\ell^{-1} M_\ell S_\ell = \Lambda_\ell$ where
  $\Lambda_\ell = \diag(\lambda_{\ell,0}, \lambda_{\ell,1}, \ldots, \lambda_{\ell,\ell})$
  is the diagonal matrix of eigenvalues.
  Then $M_\ell - n = S_\ell (\Lambda_\ell - n) S_{\ell}^{-1}$ is
  non-singular for all $n \in \Z$, if and only if $\lambda_{\ell,j} \not\in \Z$.
  Since this is the case, the Fourier coefficients of the solution are uniquely
  determined and given by $\hat{U}_{\ell,n} = i(M_\ell - n)^{-1} \hat{V}_{\ell,n}$.
  The norm of the Fourier coefficients can be readily estimated by
  \begin{equation*}
    |\hat{U}_{\ell,n}|_\infty
    \leq  |S_\ell|_\infty \cdot |(\Lambda_\ell - n)^{-1}|_\infty \cdot
    |S_\ell^{-1}|_\infty  \cdot | \hat{V}_{\ell,n}|_\infty
    \leq \frac{\kappa_\ell}{\delta_{\ell}} |\hat{V}_{\ell,n}|_\infty,
  \end{equation*}
  where $\kappa_\ell = |S_\ell|_\infty |S_\ell^{-1}|_\infty$ is the
  condition number of the matrix $S_\ell$ and thus depends only on $M_\ell$, while
  $\delta_\ell = \min_{j,n} |\lambda_{\ell,j} - n| > 0$ measures the distance of
  the eigenvalues from $\Z$.  
  Since for $n\not=0$, $|\hat{V}_{\ell,n}|_\infty = O(|n|^{-\mu})$ for all
  $\mu \in \N_0$, the Fourier series of $U_\ell$ converges in
  $C^k([0,2\pi],\C^{\ell+1})$ for every $k \in \N_0$, hence the sum
  $U_\ell$ belongs to $C^\infty([0,2\pi],\C^{\ell+1})$ and it is the unique
  classical solution of the problem. 
  
  We obtain an equivalent representation of the classical solution. In fact
  $U_\ell$ must necessarily satisfy  
  \begin{equation*}
    \big( e^{-iM_\ell \phi} U_\ell(\phi) \big)' = e^{-iM_\ell \phi}V_\ell(\phi),
  \end{equation*}
  and the general solution of this equation, with arbitrary initial condition
  $U_\ell(0)$, is 
  \begin{equation*}
    e^{-iM_\ell \phi} U_\ell(\phi) = U_\ell(0) +
    \int_0^\phi e^{-iM_\ell \phi'} V_\ell(\phi')d\phi'.
  \end{equation*}
  Then the periodic boundary condition $U_\ell(0) = U_\ell(2\pi)$ amounts to
  \begin{equation*}
    \big(e^{-2\pi iM_\ell} - 1\big) U_\ell(0) = \int_0^{2\pi} e^{-iM_\ell \phi'}
    V_\ell (\phi')d\phi'.
  \end{equation*}
  The matrix on the left-hand side is diagonalizable with eigenvalues
  $e^{-2\pi i\lambda_{\ell,j}} - 1$; for $\lambda_{\ell,j} \in \C \setminus \Z$
  the eigenvalues are all non-zero, the matrix is invertible, and the integration 
  constant $U_\ell(0)$ is uniquely determined. At last one finds that there is a
  unique periodic solution given by
  \begin{equation}
    \label{eq:U-solution}
    U_\ell (\phi) = [e^{-2\pi iM_\ell} -1]^{-1}
    \int_0^{2\pi} e^{+iM_\ell (\phi-\phi')} V_\ell(\phi')d\phi'
    + \int_0^{\phi} e^{+iM_\ell (\phi-\phi')} V_\ell(\phi')d\phi'.
  \end{equation}
  Equation (\ref{eq:U-solution}) shows that $U_\ell \in C^\infty([0,2\pi],\C^{\ell+1})$.
  The components of the vector $S_\ell^{-1} U_\ell$ are given by
  \begin{multline}
    \label{eq:U-solution-mode}
    \big(S_\ell^{-1} U_\ell (\phi)\big)_j =
    \frac{1}{e^{-2\pi i\lambda_{\ell,j}} -1}
    \int_0^{2\pi} e^{+i\lambda_{\ell,j} (\phi-\phi')}
    \big(S_\ell^{-1} V_\ell(\phi')\big)_j d\phi' \\
    + \int_0^{\phi} e^{+i\lambda_{\ell,j} (\phi-\phi')}
    \big(S_\ell^{-1} V_\ell(\phi')\big)_j d\phi'.
  \end{multline}
  Therefore,
  \begin{multline*}
    \big| \big(S_\ell^{-1} U_\ell (\phi)\big)_j \big|
    \leq \max_{\phi'} \big|\big(S_\ell^{-1} V_\ell (\phi')\big)_j\big|
    \Bigg[\Big|\frac{1}{e^{-2\pi i\lambda_{\ell,j}}-1}\Big|
    \int_0^{2\pi} e^{-\imag\lambda_{\ell,j} (\phi-\phi')} d\phi' \\
    + \int_0^{\phi} e^{-\imag\lambda_{\ell,j} (\phi-\phi')} d\phi' \Bigg].
  \end{multline*}
  For the factor in square brackets, we use
  \begin{equation*}
    \big|1 - e^{-2\pi i\lambda_{\ell,j}} \big| \geq
    \big| 1 - |e^{-2\pi i\lambda_{\ell,j}}|\big| =
    \big|1 - e^{2\pi \imag\lambda_{\ell,j}}\big|,
  \end{equation*}
  so that
  \begin{equation*}
    \Big|\frac{1}{e^{-2\pi i\lambda_{\ell,j}}-1}\Big| \leq
    \frac{1}{|e^{2\pi \imag\lambda_{\ell,j}} - 1|}.
  \end{equation*}
  For any $\phi_1>0$ and $y \not= 0$ we have
  \begin{equation*}
    \int_0^{\phi_1} e^{-y(\phi-\phi')} d\phi' = \frac{e^{-y \phi}}{y}
    \big(e^{y \phi_1} - 1 \big) = \frac{e^{-y \phi}}{|y|}
    \big|e^{y \phi_1} - 1 \big|,
  \end{equation*}
  and the two needed integrals are obtained for $\phi_1=2\pi$ and $\phi_1=\phi$.
  Hence,
  \begin{equation*}
    \big| \big(S_\ell^{-1} U_\ell (\phi)\big)_j \big| \leq 
    \frac{1}{|\imag\lambda_{\ell,j}|}
    \Big[ e^{-\imag\lambda_{\ell,j} \phi} +
      \big|1 - e^{-\imag\lambda_{\ell,j} \phi}\big| \Big]
    \max_{\phi'} \big|\big(S_\ell^{-1} V_\ell (\phi')\big)_j\big|.
  \end{equation*}
  If $\imag\lambda_{\ell,j} > 0$, the term in square brackets is
  equal to one and we obtain
  \begin{equation}
    \label{eq:U-estimate-mode}
    \big| \big(S_\ell^{-1} U_\ell (\phi)\big)_j \big| \leq 
    \frac{1}{|\imag\lambda_{\ell,j}|}
    \max_{\phi'} \big|\big(S_\ell^{-1} V_\ell (\phi')\big)_j\big|.
  \end{equation}
  If $\imag\lambda_{\ell,j} < 0$,
  we consider $\big(S_\ell^{-1} U_\ell(2\pi-\phi)\big)_j$ 
  and upon changing integration variable in (\ref{eq:U-solution-mode}) we obtain
  \begin{multline*}
    \big(S_\ell^{-1} U_\ell (2\pi-\phi)\big)_j =-\Bigg[
      \frac{1}{e^{2\pi i\lambda_{\ell,j}} -1}
      \int_0^{2\pi} e^{-i\lambda_{\ell,j} (\phi-\phi')}
      \big(S_\ell^{-1} V_\ell(2\pi-\phi')\big)_j d\phi' \\
      + \int_0^{\phi} e^{-i\lambda_{\ell,j} (\phi-\phi')}
      \big(S_\ell^{-1} V_\ell(2\pi-\phi')\big)_j d\phi'\Bigg].
  \end{multline*}
  The factor in square brackets has the same form as the right-hand side of
  (\ref{eq:U-solution-mode}) with $\lambda_{\ell,j}$ replaced by
  $-\lambda_{\ell,j}$ and now $\imag(-\lambda_{\ell,j}) > 0$. Hence we obtain
  \begin{equation*}
    \big| \big(S_\ell^{-1} U_\ell (2\pi-\phi)\big)_j \big| \leq 
    \frac{1}{|\imag\lambda_{\ell,j}|}
    \max_{\phi'} \big|\big(S_\ell^{-1} V_\ell (\phi')\big)_j\big|.
  \end{equation*}
  Since $\phi$ is arbitrary, this is equivalent to~(\ref{eq:U-estimate-mode})
  for $\imag\lambda_{\ell,j}<0$.
  Since $|\imag\lambda_{\ell,j}|\geq \lambda_{\ell,\mathrm{m}} >0$,
  taking the maximum over $j$ in (\ref{eq:U-estimate-mode}) yields
  \begin{equation*}
    \big|S_\ell^{-1} U_\ell(\phi)\big|_\infty \leq
    \frac{1}{\lambda_{\ell,\mathrm{m}}} \max_j \max_{\phi'}
    \big| \big(S_\ell^{-1} V_\ell(\phi')\big)_j\big| =
    \frac{1}{\lambda_{\ell,\mathrm{m}}}
    \max_{\phi'} \big|S_\ell^{-1} V_\ell(\phi')\big|_\infty.
  \end{equation*}
  Then
  \begin{equation*}
    \big|U_\ell(\phi)\big|_\infty \leq
    \big|S_\ell\big|_\infty \big|S_\ell^{-1} U_\ell(\phi)\big|_\infty \leq
    \frac{\kappa_l}{\lambda_{\ell,\mathrm{m}}} \max_{\phi'} \big|V_\ell(\phi')\big|_\infty,
  \end{equation*}
  which is the claimed estimate.
\end{proof}

As a corollary we obtain the solution of the problem stated in
proposition~\ref{th:ode-existence-main} with generic parameters
$r \in \mathcal{O} \subseteq \R^m$, where $\mathcal{O}$ is an open set.

\begin{corollary}
  \label{th:ode-with-parameters-1}
  For $m \in \N$, $\mathcal{O} \subseteq \R^m$, and $\ell \in \N_0$,
  let $M_\ell \in C^\infty(\mathcal{O}, \R^{(\ell+1) \times (\ell+1)})$ and
  $V_\ell \in C^\infty(\mathcal{O} \times [0,2\pi], \C^{\ell+1})$ be such that
  \begin{itemize}
  \item[1)] for any $r \in \mathcal{O}$, there is a non-singular matrix
    $S_\ell(r)$ for which $\Lambda_\ell (r) = S_\ell(r)^{-1} M_\ell(r) S_\ell(r)$
    is diagonal with eigenvalues $\lambda_{\ell,j}(r) \in \C \setminus \R$ satisfying
    $|\imag \lambda_{\ell,j} (r) | \geq \eta > 0$ for $j=0,\ldots \ell$,
    $r \in \mathcal{O}$ and for a given $\eta>0$, 
  \item[2)] $V(r,0) = V(r,2\pi)$ for $r \in \mathcal{O}$, and
  \item[3)] $S_{\ell}$, $S_\ell^{-1}$, and $\lambda_{\ell,j}$ are of class
    $C^\infty(\mathcal{O})$.
  \end{itemize}
  Then, the problem
  \begin{equation*}
    \partial_\phi U_\ell(r,\phi) - iM_\ell(r) U_\ell(r,\phi) = V_\ell(r,\phi),
    \quad U_\ell(r,0) = U_\ell(r,2\pi),
  \end{equation*}
  has a unique solution $U_\ell \in C^\infty(\mathcal{O} \times [0,2\pi], \C^{\ell+1})$,
  and, for all $\phi \in [0,2\pi]$,
  \begin{equation*}
    |U_\ell(r,\phi)| \leq \frac{\kappa_\ell}{\eta} \max_{\phi' \in [0,2\pi]}
    |V_\ell(r,\phi')|.
  \end{equation*}
\end{corollary}

\begin{proof}
  A function $U_\ell$ is a solution if and only if, for any $r \in \mathcal{O}$,
  $U_\ell(r,\cdot)$ solves the ordinary differential equation in
  proposition~\ref{th:ode-existence-main}. Assumptions 1) and 2) imply
  that all the hypotheses of proposition~\ref{th:ode-existence-main} are
  verified and we note that $\lambda_{\ell,\mathrm{m}}(r) \geq \eta$ uniformly for
  $r\in \mathcal{O}$. Therefore, for any $r \in \mathcal{O}$, there is a unique
  solution $U_\ell(r,\cdot)$ to the problem of
  corollary~\ref{th:ode-with-parameters-1}. The fact that $U_\ell$ is in
  $C^\infty(\mathcal{O} \times [0,2\pi])$ follows from the explicit
  formula~(\ref{eq:U-solution}) and assumption 3) by using the classical results of
  differentiation in the integral. The estimates follow from the ones proven in 
  proposition~\ref{th:ode-existence-main}.
\end{proof}

We can now give the general result for equation~(\ref{eq:psi-psi-eq}) with
right-hand side in $C^\infty$ and then in the Schwartz space, which will
immediately imply uniqueness in $\Schwartz^\prime$. Uniqueness of the $C^\infty$
solution, in particular, is a special case of lemma~\ref{th:phi-uniqueness}, but
here we give a different more explicit argument. 

\begin{proposition}
  \label{th:psi-psi-Schwartz}
  Let $a \in C^\infty(\R^l \times \R^3)$ satisfy
  condition~(\ref{eq:minimal-a-condition}). Then for any
  $\psi \in C^\infty(\R^l \times \R^3)$, equation~(\ref{eq:psi-psi-eq}) has a 
  unique solution $\varphi \in C^\infty(\R^l \times \R^3)$.
\end{proposition}

\begin{proof}
  Uniqueness of a $C^1$ solution. If $\varphi \in C^1(\R^l \times \R^3)$ is a
  solution of~(\ref{eq:psi-psi-eq}), evaluating the equations at $(u_1,u_2)=(0,0)$
  yields 
  \begin{equation*}
    \varphi(\theta,0,0,u_3) = i\psi(\theta,0,0,u_3) / a(\theta,0,0,u_3),
  \end{equation*}
  while for $(u_1,u_2) \not = (0,0)$, 
  \begin{equation*}
    \varphi(\theta,u) = U(r,\phi),
  \end{equation*}
  where $U$ is the unique solution of~(\ref{eq:ode}) constructed in
  using corollary~\ref{th:ode-with-parameters-1}.
  These conditions completely define the value of a $C^1$ solution everywhere in
  $\R^l \times \R^3$.  

  Existence of a $C^\infty$ solution. First we address the special case
  \begin{equation}
    \label{eq:special-psi}
    \psi(\theta,u) = \sum_{m+n=k} u_1^m u_2^n \tilde{\psi}_{m,n}(\theta, u),
  \end{equation}
  where $\tilde{\psi}_{m,n} \in C^\infty(\R^l \times \R^3)$ and $k \geq 2$ is a
  given integer. Let us consider the ordinary differential
  equation~(\ref{eq:ode}) with source term $V$ determined by the $\psi$ 
  given in~(\ref{eq:special-psi}). This equation is a special cases of the problem 
  addressed in corollary~\ref{th:ode-with-parameters-1} with $\ell = 0$;
  particularly, because of assumption (\ref{eq:minimal-a-condition}),
  $M_0(r) = \tilde{a}(r)$ satisfies the hypotheses of the corollary. Therefore,
  equation~(\ref{eq:ode}) has a unique solution
  $U \in C^\infty (\mathcal{O} \times [0,2\pi])$. 
  Due to the special choice of $\psi$ and the estimate in
  corollary~\ref{th:ode-with-parameters-1}, one deduces that
  for any given point $(\theta,u_3)$ and $\delta > 0$, there are constants
  $c_{U,\theta,u_3,\delta}$ and $c_{V,\theta,u_3,\delta} > 0$ for which 
  \begin{equation*}
    \big|U(r,\phi)\big| \leq u_\perp^k c_{U,\theta,u_3,\delta}, \quad
    \big|V(r,\phi)\big| \leq u_\perp^k c_{V,\theta,u_3,\delta},
  \end{equation*}
  uniformly in $u_\perp \in (0, \delta]$ and $\phi \in [0,2\pi]$.
  Let us construct the function
  \begin{equation*}
    \varphi(\theta,u) \coloneqq
    \begin{cases}
      U(r,\phi), & \text{ for } (u_1,u_2) \not=(0,0), \\
      0, \qquad  & \text{ for } (u_1,u_2) =(0,0).
    \end{cases}
  \end{equation*}
  Since polar coordinates in the region $(u_1,u_2) \not= (0,0)$ define a
  diffeomorphism which maps the partial differential equation~(\ref{eq:psi-psi-eq})
  into the ordinary differential equation~(\ref{eq:ode}), the function 
  $\varphi$ is of class $C^\infty$ and solves equation~(\ref{eq:psi-psi-eq}) 
  in the open set $\{(\theta,u) \in \R^l \times \R^3: (u_1,u_2) \not=(0,0)\}$.
  We also have
  $|\varphi(\theta,u)| \leq c_{U, \theta,u_3,\delta} |(u_1,u_2)|^k$,
  and thus $\varphi$ is continuous on whole domain $\R^l \times \R^3$.

  We now show that $\varphi \in C^{k-1}(\R^l \times \R^3)$. We need to check
  the existence of derivatives at $(u_1,u_2) = (0,0)$ and their continuity.
  With this aim we collect the derivatives of $\varphi$ with the same order
  $\ell$ of differentiation in $(u_1,u_2)$ into the vector-valued functions 
  $\varphi_{\alpha,\beta,\ell}$, as defined in~(\ref{eq:pde-derivatives-var});
  analogously let $\psi_{\alpha,\beta,\ell}$ be given by
  (\ref{eq:pde-derivatives-rhs}). For $(u_1,u_2) \not= (0,0)$, $\varphi$ is
  of class $C^\infty$ and solves equation~(\ref{eq:psi-psi-eq}), so that
  $\varphi_{\alpha,\beta,\ell}$ satisfies equation~(\ref{eq:pde-derivatives})
  with source $\psi_{\alpha,\beta,\ell}$. In polar coordinates those
  equations amount to ordinary differential equations~(\ref{eq:ode-derivatives})
  for the vector-valued functions given by 
  $U_{\alpha,\beta,\ell}(r,\phi) = \varphi_{\alpha,\beta,\ell}(\theta,
  u_\perp \cos\phi, - u_\perp\sin\phi,u_3)$ and with source
  $V_{\alpha,\beta,\ell}(r,\phi) = \psi_{\alpha,\beta,\ell}(\theta,
  u_\perp \cos\phi, - u_\perp\sin\phi,u_3)$.
  From lemma~\ref{th:matrix-Ak} we know that the matrices $A_\ell$, and thus 
  $\transpose{A}_\ell$, are diagonalizable with integer eigenvalues. It follows
  that the matrices $M_\ell(r) = \tilde{a}(r) + \transpose{A}_\ell$ in
  equation~(\ref{eq:ode-derivatives}) 
  are diagonalizable and the imaginary part of the eigenvalues coincides
  with $\imag \tilde{a}$. In view of assumption~(\ref{eq:minimal-a-condition}),
  we have $|\imag \tilde{a}|\geq \eta > 0$, and the hypotheses of
  corollary~\ref{th:ode-with-parameters-1} are therefore satisfied. We can
  conclude that
  \begin{equation}
    \label{eq:estimate-Upm}
    \big| U_{\alpha,\beta,\ell}(r,\phi) \big|_\infty \leq \frac{\kappa_\ell}{\eta}
    \max_{\phi'} \big| V_{\alpha,\beta,\ell}(r,\phi') \big|_\infty.
  \end{equation}
  We can use this estimate to show that, for $(u_1,u_2) \not= (0,0)$ we have
  \begin{equation}
    \label{eq:claim-phi-pm-derivatives}
    |\partial_\theta^\alpha \partial_u^\beta \varphi (\theta,u) | \leq 
    K_{\alpha,\beta} (\theta,u_3) u_\perp^{k-\ell} \text{ with }
    \ell = \beta_1 + \beta_2, \text{ and }
    0 < |(u_1,u_2)| \leq \delta.
  \end{equation}
  We prove this by induction over $\alpha$, $\beta_3$ and $\ell = \beta_1+\beta_2$.
  For $\alpha = 0$, $\beta_3=0$, and $\ell = 0$ the claim follows directly from
  the estimate in corollary~\ref{th:ode-with-parameters-1} and
  $V = u_\perp^k V_R$. For the induction step, let us assume that the claim
  holds for all $\alpha' < \alpha$, $\beta_3' < \beta_3$, and
  $\ell' = \beta_1' + \beta_2' < \ell = \beta_1+\beta_2$.
  Then from~(\ref{eq:pde-derivatives-rhs}) we deduce
  \begin{equation*}
    \big|V_{\alpha,\beta,\ell}(r,\phi)\big|_\infty =
    \big|\psi_{\alpha,\beta\ell}(\theta,u)\big|_\infty \leq
    c_{\alpha,\beta}^\psi(\theta,u_3) u_\perp^{k-\ell},
  \end{equation*}
  and from estimate (\ref{eq:estimate-Upm}) we deduce that, if
  $\beta_1 + \beta_2 = \ell$,
  \begin{equation*}
    \big|\partial_\theta^\alpha \partial^\beta_u \varphi(\theta,u)\big| \leq
    \big|\varphi_{\alpha,\beta,\ell}(\theta,u)\big|_\infty =
    \big|U_{\alpha,\beta,\ell}(r,\phi)\big|_\infty \leq
    \frac{\kappa_\ell}{\eta} c_{\alpha,\beta}^\psi(\theta,u_3) u_\perp^{k-\ell},
  \end{equation*}
  which is (\ref{eq:claim-phi-pm-derivatives}) as claimed. Therefore,
  \begin{equation*}
    \frac{|\partial_\theta^\alpha \partial_u^\beta \varphi (\theta,u)|}{
      |(u_1,u_2)|} \leq K_{\alpha,\beta}(\theta,u_3) |(u_1,u_2)|^{k-\ell-1},
    \text{ for } \beta_1 + \beta_2 = \ell \leq k-2,
  \end{equation*}
  which implies that the derivatives
  $\partial_\theta^\alpha \partial_u^\beta \varphi(\theta,0,0,u_3)$
  exist and are zero for $\beta_1+\beta_2\leq k-1$.
  Continuity of $\partial_\theta^\alpha \partial_u^\beta \varphi$ follows from
  inequality (\ref{eq:claim-phi-pm-derivatives}). Hence
  $\varphi \in C^{k-1}(\R^l \times \R^3)$ as claimed. Since $k \geq 2$,
  $\varphi \in C^1(\R^l \times \R^3)$ and equation~(\ref{eq:psi-psi-eq}) is
  satisfied also at $(u_1,u_2) = (0,0)$ since all terms vanish if $u_1 = u_2 = 0$.

  For the general case $\psi \in C^\infty(\R^l \times \R^3)$, for any integer
  $k \geq 2$ we write
  \begin{equation*}
    \psi = \psi_{k-1} + \psi_{\mathrm{r},k},
  \end{equation*}
  where $\psi_{k-1}$ is the Taylor polynomial of degree $k-1$ in $(u_1,u_2)$
  centered at $(u_1,u_2) = (0,0)$ and $\psi_{\mathrm{r},k}$ is the remainder,
  which is of the form~(\ref{eq:special-psi}). Hence the above argument applies
  to $\psi_{\mathrm{r},k}$. Let $\varphi_{\mathrm{r},k} \in C^{k-1}(\R^l \times \R^3)$
  be the unique solution obtained with $\psi_{\mathrm{r},k}$ as a source
  term. On the other hand proposition~\ref{th:psi-psi-polynomial} established
  the existence of a unique solution $\varphi_{k-1} \in C^\infty(\R^l \times \R^3)$
  for the case with source term $\psi_{k-1}$.
  The sum $\varphi = \varphi_{k-1} + \varphi_{\mathrm{r},k}$ is of class
  $C^{k-1}$ and it is the unique solution of~(\ref{eq:psi-psi-eq}). 
  Since $k$ is arbitrary we conclude that $\varphi \in C^{\infty}(\R^l \times \R^3)$.
\end{proof}

\begin{corollary}
  \label{th:psi-psi-Schwartz-estimates}
  Let $a \in C^\infty(\R^l \times \R^3)$ satisfy both conditions
  (\ref{eq:general-a-condition}) and let $m \in \R$ be the constant in
  (\ref{eq:growth-a-condition}).  
  Then the unique solution $\varphi \in C^\infty(\R^l \times \R^3)$ of
  equation~(\ref{eq:psi-psi-eq}) obtained in
  proposition~\ref{th:psi-psi-Schwartz} is such that: 
  \begin{itemize}
  \item[(i)] If there are $m_0 \in \R$ and $n_0 \in \N_0$ such that
    \begin{equation*}
      \big|\partial^\alpha_\theta \partial^\beta_u \psi(\theta,u) \big| \leq
      C_{\alpha,\beta}^\psi (1+\theta^2+u^2)^{m_0}, \quad
      |\alpha| + |\beta| \leq n_0,
    \end{equation*}
    then 
    \begin{equation*}
      \big|\partial_\theta^\alpha \partial_u^\beta \varphi(\theta,u) \big| \leq
      C_{\eta, \alpha, \beta}^\varphi (1+\theta^2+u^2)^{m_{\alpha,\beta}}, \quad
      |\alpha|+|\beta| \leq n_0,
    \end{equation*}
    where $m_{\alpha,\beta} = m_0$ if $m \leq 0$, and
    $m_{\alpha,\beta} = m_0 + (|\alpha|+|\beta|) m$ if $m >0$.
  \item[(ii)] If $\psi \in \Schwartz(\R^l \times \R^3)$,
    then $\varphi \in \Schwartz(\R^l \times \R^3)$.
  \end{itemize}
\end{corollary}

\begin{proof}
  (i) We proceed by induction as in the proof of~(\ref{eq:claim-phi-pm-derivatives}).
  For $\alpha=0$ and $\beta=0$, the claim follows directly from the estimate in
  corollary~\ref{th:ode-with-parameters-1} since for $u_1^2 + u_2^2 \geq \rho^2 > 0$,
  \begin{equation*}
    \big|\varphi(\theta,u) \big| = \big|U(r,\phi)\big|
    \leq \frac{1}{\eta} \max_\phi \big|V(r,\phi)\big| \leq
    \frac{1}{\eta} C^\psi_{0,0} (1+\theta^2+u^2)^{m_0}.
  \end{equation*}
  The constant is independent of the radius $\rho$, hence the claim. For the
  induction step, if the claim is true for all multi-indices 
  $\alpha' < \alpha$ and $\beta' < \beta$, where $\alpha$ and $\beta$ are
  any multi-indices satisfying $|\alpha|+|\beta| \leq n_0$,
  then, again for $u_1^2 + u_2^2 \geq \rho^2 > 0$, we estimate
  $\big|V_{\alpha,\beta,\ell}(r,\phi)\big|_\infty =
  \big|\psi_{\alpha,\beta,\ell}(\theta,u)\big|_\infty$ by  
  \begin{align*}
    \big|V_{\alpha,\beta,\ell}\big|_\infty &\leq \max \Big\{
    \big| \partial_\theta^\alpha \partial_u^\beta \psi \big| 
    + \sum_{\alpha' < \alpha} \sum_{\beta' < \beta} \binom{\alpha}{\alpha'}
    \binom{\beta}{\beta'} \big|\partial_\theta^{\alpha-\alpha'}
    \partial_u^{\beta-\beta'} a\big| \, |\partial_\theta^{\alpha'}
    \partial_u^{\beta'} \varphi\big| \Big\} \\
    &\leq \max \Big\{ C^\psi_{\alpha,\beta} (1+\theta^2+u^2)^{m_0}
    + \sum_{\alpha' < \alpha} \sum_{\beta' < \beta}
    \mathcal{C}^{\alpha,\beta}_{\eta,\alpha',\beta'}
    (1+\theta^2+u^2)^{m+m_{\alpha',\beta'}} \Big\},
  \end{align*}
  where the maximum is computed over all $(\beta_1, \beta_2)$ such that
  $\beta_1+\beta_2=\ell$, holding $\alpha$ and $\beta_3$ fixed. We observe that,
  if $m \leq 0$, then $m+m_{\alpha',\beta'} = m + m_0 \leq m_0 = m_{\alpha,\beta}$,
  while if $m>0$,
  $m+m_{\alpha',\beta'} = m_0 + (|\alpha'|+|\beta'|+1)m \leq m_0 +
  (|\alpha|+|\beta|)m = m_{\alpha,\beta}$. In both cases we have
  \begin{equation*}
    \max_\phi \big|V_{\alpha,\beta,\ell}(r,\phi)\big|_\infty \leq
    \tilde{C}^\psi_{\eta,\alpha,\beta_3,\ell}
    (1+\theta^2+u^2)^{m_{\alpha,\beta}}.
  \end{equation*}
  We can now apply inequality~(\ref{eq:estimate-Upm}) and deduce
  \begin{equation*}
    \big|\partial_\theta^\alpha \partial_u^\beta \varphi(\theta,u)\big|
    \leq \frac{\kappa_{\ell}}{\eta} \tilde{C}^\psi_{\eta,\alpha,\beta_3,\ell}
    (1+\theta^2+u^2)^{m_{\alpha,\beta}},
  \end{equation*}
  where $\ell = \beta_1+\beta_2$; this proves the claim for $\alpha$ and $\beta$
  satisfying $|\alpha|+|\beta| \leq n_0$.

  (ii) If $\psi \in \Schwartz(\R^l \times \R^3)$, then it satisfies the
  assumption of item (i) for all $m_0 \in \R$ and for all $\alpha,\beta$. For any
  $\mu \in \R$, $\alpha \in \N_0^l$, and $\beta \in \N_0^3$, let us apply the
  estimate proven in item (i) with $m_0 = \mu - (|\alpha|+|\beta|) m$; we obtain
  $\big|\partial_\theta^\alpha \partial_u^\beta \varphi (\theta,u) \big| \leq
  C_{\eta, \alpha, \beta}^\varphi (1+\theta^2+u^2)^\mu$.
  Hence $\varphi \in \Schwartz(\R^l \times \R^3)$ as claimed. 
\end{proof}

The existence of a solution $\varphi \in \Schwartz(\R^l \times \R^3)$ of
equation~(\ref{eq:psi-psi-eq}) implies (by duality)
uniqueness in $\Schwartz^\prime(\R^l \times \R^3)$ for the linear equation 
\begin{equation}
  \label{eq:generic-magnetized-kinetic}
  -(u_1 \partial_{u_2} - u_2 \partial_{u_1}) h - i a h = s,
\end{equation}
for $s \in \Schwartz^\prime(\R^l \times \R^3)$ and $a$ satisfying conditions
(\ref{eq:general-a-condition}).  
 
\begin{proposition}
  \label{th:uniqueness-magnetized}
  If $a \in C^\infty(\R^l \times \R^3)$ satisfies all conditions
  (\ref{eq:general-a-condition}), 
  equation~(\ref{eq:generic-magnetized-kinetic}) has at most one solution in
  $\Schwartz^\prime$. 
\end{proposition}

\begin{proof}
  We show that the only solution of the associate homogeneous equation is the
  trivial solution $h = 0$. Explicitly, this means that if
  \begin{equation*}
    \big\langle h,
    +(u_1 \partial_{u_2} - u_2 \partial_{u_1})\chi - ia \chi \big\rangle = 0,
  \end{equation*}
  for all $\chi \in \Schwartz(\R^l \times \R^3)$, then $h = 0$.

  Given an arbitrary test function $\psi \in \Schwartz(\R^l \times \R^3)$
  let us consider the equation 
  \begin{equation*}
    (u_1 \partial_{u_2} - u_2 \partial_{u_1}) \varphi - ia \varphi = \psi.
  \end{equation*}
  Since $-a$ satisfies conditions~(\ref{eq:general-a-condition}), we have
  established in corollary~\ref{th:psi-psi-Schwartz-estimates} that this
  equation has a unique solution $\varphi \in \Schwartz$. Then for any
  $\psi \in \Schwartz$,
  \begin{equation*}
    \langle h, \psi \rangle =
    \big\langle h,
    +(u_1 \partial_{u_2} - u_2 \partial_{u_1})\varphi -
    i a \varphi \big\rangle = 0,
  \end{equation*}
  and thus $h = 0$ as a tempered distribution.
\end{proof}

%

\section{Response of a uniform magnetized plasma} 
\label{sec:uniform}

In this section we address the case of a uniform magnetized plasma and prove
the results stated in section~\ref{sec:main-2}. With this aim we shall rely
heavily on the preparatory results of section~\ref{sec:auxiliary-pde} and on a
stationary-phase argument postponed to section~\ref{sec:stationary-phase}.

\subsection{Notation} 
\label{sec:notation-magnetized}

We shall make use of normalized momentum (\ref{eq:us}) and for
$(u_1,u_2) \not = (0,0)$ we define the two additional systems of cylindrical
coordinates  
\begin{equation}
  \label{eq:cylindrical}
  u_1 = u_\perp \cos \phi, \qquad u_2 = \mp u_\perp \sin\phi,
  \qquad u_3 = u_\parallel,
\end{equation}
with $u_\perp \in \R_+$ and $\phi \in [0,2\pi]$. We re-write the linearized
Vlasov equation in one of these two cylindrical coordinate systems depending on
the electric charge of the considered particle species: we choose the sign $-$
(resp., $+$) for a positively (resp., negatively) charged particle species.

With normalized Fourier variables
\begin{equation}
  \label{eq:norm-omega-k}
  \tau \coloneqq \omega/\omega_{c,\species}, \quad
  \xi  \coloneqq c k / \omega_{c,\species}, \quad \text{and with} \quad
  \kappa_\species (u) \coloneqq \gamma(u) \nu_\species(u) /\omega_{c,\species},
\end{equation}
we define the quantities
\begin{equation}
  \label{eq:a-and-bs}
  \begin{aligned}
    a_\eps (\tau,\xi,u) &\coloneqq
    \frac{\omega + i \eps\nu - k_3 v_3}{\omega_{c,\species}/\gamma} =
    \gamma (u) \tau - \xi_3 u_3 + i \eps \kappa_\species (u), \\
    b_i(\xi,u) &\coloneqq
    \frac{k_i v_\perp}{\omega_{c,\species}/\gamma} = u_\perp \xi_i, \quad
    i = 1,2.
  \end{aligned}
\end{equation}
Written in terms of normalized variables, the functions
$a_0 \coloneqq a_\eps|_{\eps=0}$, $b_i$, and $\gamma$ are independent of the
particle species.  

With $G_\species$ defined in (\ref{eq:gyrotropic}), the functions of $u \in \R^3$
defined by
\begin{equation}
  \label{eq:test-functions}
  \mathscr{F}_\species(u) \coloneqq \frac{1}{u_\perp}
  \frac{\partial G_\species}{\partial u_\perp}(u_\parallel,u_\perp), \qquad
  \mathscr{G}_\species(u) \coloneqq \frac{\partial G_\species}{\partial u_\parallel}
  (u_\parallel,u_\perp)
  - \frac{u_\parallel}{u_\perp} \frac{\partial G_\species}{\partial u_\perp}
  (u_\parallel,u_\perp),
\end{equation}
belong $\Schwartz(\R^3)$ because of the assumptions on $G_\species$.

Next we define the first-order partial differential operators
\begin{equation}
  \label{eq:Qj-operator}
  Q_{\species,j}(\tau,\xi,u,\partial_\xi) \coloneqq
  \mathscr{F}_\species(u) \Phi_j(\tau,\xi,u,\partial_\xi) +
  \mathscr{G}_\species(u) \Psi_j(\tau,\xi,u,\partial_\xi),
\end{equation}
where $\Phi_j, \Psi_j$, for $j=1,2,3$, are given in terms of the coefficients
\begingroup
\allowdisplaybreaks
\begin{subequations}
  \label{eq:psi-coeff}
  \begin{align}
    \Gamma_0(\tau,\xi,u) &\coloneqq \tau \gamma(u) e^{\pm i(\xi_1 u_2 - \xi_2u_1)}, \\
    \Gamma_j(\tau,\xi,u) &\coloneqq \xi_j e^{\pm i(\xi_1 u_2 - \xi_2u_1)},
    \qquad\quad j=1,2,3,
  \end{align}
\end{subequations}
by
\begin{equation*}
  \begin{aligned}
    \Phi_1 &\coloneqq \pm \Gamma_0 \partial_{\xi_2}, \\
    \Psi_1 &\coloneqq \pm \Gamma_3 \partial_{\xi_2},
  \end{aligned}
  \qquad
  \begin{aligned}
    \Phi_2 &\coloneqq \mp \Gamma_0 \partial_{\xi_1}, \\
    \Psi_2 &\coloneqq \mp \Gamma_3 \partial_{\xi_1},
  \end{aligned}
  \qquad
  \begin{aligned}  
    \Phi_3 &\coloneqq   i u_\parallel \Gamma_0, \\
    \Psi_3 &\coloneqq   i \Gamma_0 \mp
    \big(\Gamma_1 \partial_{\xi_2} - \Gamma_2
    \partial_{\xi_1} \big).
  \end{aligned}
\end{equation*}
\endgroup
The coefficients $\Gamma_j$ all satisfy
\begin{equation*}
  |\partial^\alpha_{\tau,\xi,u} \Gamma_j(\tau,\xi,u)| \leq C_\alpha
  (1+\tau^2+\xi^2+u^2)^{1+|\alpha|},
\end{equation*}
for all $\alpha \in \N_0^7$, and thus multiplication by $\Gamma_j$ is
closed in $\Schwartz$.  

We also introduce the functions of $b=(b_1,b_1)$ given by
\begin{equation}
  \label{eq:Anpm}
  A_n^\pm (b) \coloneqq
  \sum_{k,\ell\in\Z:\; k \pm \ell = n} (\pm i)^{\ell} J_{k}(b_1) J_{\ell}(b_2), 
\end{equation}
where $J_k$ are the Bessel's functions of the first kind.
If $b = (b_1,b_2) = (0,0)$, the only non-zero term in the series is the one for
$k=0$ and $\ell=0$, hence $A_n^\pm(0) = 1$ for $n=0$, and $A_n^\pm(0) = 0$ for
$n \not = 0$; if, on the other hand, $b = (b_1, b_2) \not= (0,0)$ we can write
$b_1 = |b| \cos \theta$, $b_2 = \mp |b| \sin \theta$ and
\begin{equation}
  \label{eq:Anpm-explicit}
  A^\pm_n(b) = J_n(|b|) e^{- i n \theta}, \qquad b \not= 0,
\end{equation}
This identity follows from Jacobi-Anger expansions \cite[p.361]{Abramowitz1964}
\begin{equation*}
  e^{iz\cos\phi} = \sum_{n \in \Z} i^n J_n(z) e^{in\phi},
  \quad
  e^{iz\sin\phi} = \sum_{n \in \Z} J_n(z) e^{in\phi},
\end{equation*}
for $z \in \C$ and $\phi \in \R$, that imply
\begin{equation}
  \label{eq:Anpm-der}
  e^{\pm i (b_2 \cos\phi \pm b_1 \sin\phi)} = \sum_{n\in\Z} A^\pm_n(b)
  e^{in \phi}.
\end{equation}
With $b_1 = |b| \cos \theta$ and $b_2 = \mp |b| \sin \theta$, one computes
\begin{equation*}
  \sum_{n\in\Z} A^\pm_n (b) e^{in \phi} = e^{ i |b| \sin(\phi - \theta)}
  = \sum_{n \in \Z} J_n(|b|) e^{in(\phi-\theta)},
\end{equation*}
which yields identity~(\ref{eq:Anpm-explicit}). At last let
\begin{equation}
  \label{eq:r-pm-eps}
  \begin{aligned}
    r^\pm_\eps (\tau, \xi, u) &\coloneqq
    \int_0^{+\infty} e^{ia_\eps(\tau,\xi,u) \lambda
      - i(\xi_1 u_1 + \xi_2 u_2) \sin\lambda 
      \pm i (\xi_2 u_1 - \xi_1 u_2) \cos\lambda} d\lambda \\
    &= \frac{i}{2} \frac{e^{-i\pi a_\eps(\tau,\xi,u)} }{\sin\big(\pi
      a_\eps(\tau,\xi,u)\big)} 
    P^\pm_\eps (\tau,\xi,u),
  \end{aligned}
\end{equation}
with
\begin{equation}
  \label{eq:Ppm}
  P^\pm_\eps(\tau,\xi,u) \coloneqq \int_0^{2\pi} e^{ia_\eps (\tau,\xi,u) \lambda
    - i(\xi_1 u_1 + \xi_2 u_2) \sin\lambda 
    \pm i (\xi_2 u_1 - \xi_1 u_2) \cos\lambda} d\lambda.
\end{equation}
The quantities defined in equations~(\ref{eq:psi-coeff})-(\ref{eq:Ppm}) are
independent on the particle species.

\begin{lemma}
  \label{th:lemma-4.0}
  For $\eps > 0$ and for each choice of the sign, the equation 
  \begin{equation*}
    \mp (u_1 \partial_{u_2} - u_2 \partial_{u_1}) r^\pm_\eps -
    ia_\eps r^\pm_\eps = e^{\pm i(\xi_2 u_1 - \xi_1 u_2)},
  \end{equation*}
  has a unique solution in $C^\infty(\R^7)$ given by (\ref{eq:r-pm-eps}) and
  for every $\alpha \in \N_0^4$, $\beta \in \N_0^3$ there are constants
  $C_{\alpha,\beta,\eps,\nu} > 0$ and $m_{\alpha,\beta} \in\R$ such that 
  \begin{equation*}
    |\partial_{\tau,\xi}^\alpha \partial_u^\beta r^\pm_\eps (\tau,\xi,u)| \leq
    C_{\alpha,\beta,\eps,\nu} (1+\tau^2+\xi^2+u^2)^{m_{\alpha,\beta}},
  \end{equation*}
  uniformly in $(\tau,\xi,u)$.
\end{lemma}

\begin{proof}
  For both choices of the sign, the equation for $r^\pm_\eps$ is of the
  form~(\ref{eq:psi-psi-eq}) with right-hand side in $C^\infty$ and with
  $\theta = (\tau,\xi)$. (For $r^-_\eps$ in particular, one can multiply the
  equation by $-1$ and set $a = -a_\eps$ and $\psi = - e^{\pm i(\xi_2 u_1 - \xi_1 u_2)}$.)
  For $\eps > 0$, the function $a_\eps$ defined in (\ref{eq:a-and-bs}) is such that
  all conditions (\ref{eq:general-a-condition}) are true. Therefore 
  proposition~\ref{th:psi-psi-Schwartz} ensures the existence a unique
  solution $r^\pm_\eps \in C^\infty(\R^7)$ for each choice of the sign.
  For the claimed estimates it is enough to show that for any $n_0 \in \N_0$ the
  right-hand side of the equation satisfies the hypothesis of
  corollary~\ref{th:psi-psi-Schwartz-estimates} (i), and this is straightforward.

  The integral expressions~(\ref{eq:r-pm-eps}) can be checked
  by direct substitution into the equation. In fact if
  $I^\pm_\eps(\tau,\xi,u; \lambda)$ denotes the integrand
  in~(\ref{eq:r-pm-eps}), we have the identity 
  \begin{equation*}
    \big[\mp(u_1\partial_{u_2} - u_2 \partial_{u_1}) -ia_\eps(\tau,\xi,u) \big]
    I^\pm_\eps(\tau, \xi,u; \lambda) 
    = -\frac{\partial}{\partial \lambda} I^\pm_\eps(\tau, \xi, u; \lambda),
  \end{equation*}
  and $I^\pm_\eps(\tau, \xi,u;0) = e^{\pm i(\xi_2 u_1 - \xi_1 u_2)}$. For the
  second form of $r_\eps^\pm$, we notice that, following Qin~et~al.~\cite{Qin2007},
  \begin{equation*}
    r^\pm_\eps(\tau,\xi,u) = \int_0^{+\infty} e^{ia_\eps (\tau,\xi,u) \lambda -
      i(\xi_1u_1+\xi_2u_2) \sin(\lambda + 2\pi)
      \pm i (\xi_2 u_1 - \xi_1u_2) \cos(\lambda + 2\pi)}d\lambda,
  \end{equation*}
  and changing integration variable we have
  \begin{align*}
    r^\pm_\eps(\tau,\xi,u) &= e^{-2\pi i a_\eps(\tau,\xi,u)}
    \int_{2\pi}^{+\infty} e^{ia_\eps(\tau,\xi,u) \lambda - i(\xi_1u_1+\xi_2u_2)
      \sin\lambda \pm i (\xi_2 u_1 - \xi_1u_2) \cos\lambda} d\lambda \\
    &= e^{-2\pi i a_\eps(\tau,\xi,u)} \bigg[r_\eps^\pm(\tau,\xi,u)  \\
      & \qquad\qquad-
      \int_{0}^{2\pi} e^{ia_\eps(\tau,\xi,u) \lambda - i(\xi_1u_1+\xi_2u_2) \sin\lambda
        \pm i (\xi_2 u_1 - \xi_1u_2) \cos\lambda}d\lambda \bigg],
    \intertext{hence,}
    r^\pm_\eps(\tau,\xi,u) &= \frac{-e^{-2\pi ia_\eps(\tau,\xi,u)}}
    {1-e^{-2\pi ia_\eps(\tau,\xi,u)}}
    \int_{0}^{2\pi} e^{ia_\eps(\tau,\xi,u) \lambda - i(\xi_1u_1+\xi_2u_2) \sin\lambda
      \pm i (\xi_2 u_1 - \xi_1u_2) \cos\lambda} d\lambda,
  \end{align*}
  from which the second expression for $r^\pm_\eps$ follows.
\end{proof}

\subsection[The roots of $a_0-n$]{%
  \for{toc}{The roots of \texorpdfstring{$a_0-n$}{} and
  \texorpdfstring{$\lim_{\varepsilon \rightarrow 0+}(1/\sin \pi a_\varepsilon)$}{}}%
  \except{toc}{The roots of \texorpdfstring{$a_0-n$}{} and the distribution
    \texorpdfstring{$\lim_{\varepsilon \rightarrow 0^+}(1/\sin \pi a_\varepsilon)$}{}}%
  }
\label{subsec:eq:a0}
From expression~(\ref{eq:r-pm-eps}), one can see that the main issue in
computing the limit for $\eps \to 0^+$ of $r^\pm_\eps$ consists in the sets
of points $(\tau,\xi,u) \in \R \times \R^3 \times \R^3$ for which  
\begin{equation*}
  a_0(\tau,\xi,u) = \gamma (u) \tau - \xi_3 u_3 \in \Z.
\end{equation*}
We note a few preliminary facts about such points.

\begin{remark}
  The condition $a_0(\tau,\xi,u)=n\in\Z$ is equivalent to
  \begin{equation*}
    \omega - k_3 v_3 = n \Omega_{\species}(u),    \quad n \in \Z,
  \end{equation*}
  which defines the cyclotron resonances: particles of the species $\species$ 
  that satisfy this condition along their orbit, for some integer $n$, resonate
  to a plane wave with frequency and wave vector $(\omega,k)$. For resonant
  particles, the Doppler-shifted wave frequency $\omega - k_3 v_3$ matches a
  multiple (also referred to as a harmonic) of the gyration frequency
  $\Omega_\species$ of the particle's orbit around the magnetic field.
\end{remark}

For any given $n \in \Z$ and $(\tau,\xi) \in \R \times \R^3$, $\tau \not = 0$, let
\begin{equation}
  \label{eq:resonances}
  \mathcal{R}_n(\tau,\xi) =  \{u\in\R^3: a_0(\tau,\xi,u) = n \}.
\end{equation}
Physically $\mathcal{R}_n(\tau,\xi)$ is the set of normalized particle momenta
$u$ that resonate with the $n$-th harmonic of the cyclotron frequency when the
wave field is a plane wave with frequency and wave vector are given by
$(\tau,\xi)$. Since $a_0$ is constant in $(\xi_1,\xi_2)$, the sets
$\mathcal{R}_{n}(\tau,\xi)$ for a fixed harmonic number $n$ depend only on
$\tau$, and $\xi_3$. In addition it is enough to study them for $\tau>0$ and
$\xi_3 \geq 0$, in view of the symmetries of the function $a_0$. (The hyperplane
$\tau=0$ will be excluded in our main results.)
A necessary condition for $u \in \mathcal{R}_{n}(\tau,\xi)$ is  
\begin{equation*}
  (\tau^2 - \xi_3^2) u_3^2  + \tau^2 (1 + u_1^2 + u_2^2)
  - 2n\xi_3 u_3 - n^2 = 0.
\end{equation*}
which defines a family of surfaces of revolution obtained by the rotation of conics
around the $u_3$-axis. Specifically we find ellipsoids for $\tau^2 > \xi_3^2$, a
paraboloid for $\tau^2 = \xi_3^2$, and one branch of a hyperboloid for
$\tau^2 < \xi^2_3$. We shall speak of elliptic, parabolic, and hyperbolic 
resonances, with reference these three conditions, respectively. In the elliptic
case, $\mathcal{R}_{n}(\tau,\xi)$ is non-empty if and only if $n \tau > 0$, that
is when, $\tau$ and $n$ are both non-zero and have the same sign; ellipsoids 
degenerate to a point when $\tau^2 - \xi_3^2 = n^2 > 0$. In the hyperbolic
case one can check that $\mathcal{R}_{n}(\tau,\xi)$ is non-empty for all integers
$n$. The special case $n=0$ is a particular hyperbolic resonance, to be referred
to as Landau resonance, for which $\mathcal{R}_0(\tau,\xi)$ is non empty only if 
$\tau^2 < \xi_3^2$. 

We shall study the limit in $\Schwartz^\prime(\R^3)$ of functions of the form
\begin{equation}
  \label{eq:r-tilde}
  \tilde{r}_\eps(\tau,\xi,u) = \frac{\tilde{P}_\eps(\tau,\xi,u)}{
    \sin\big(\pi a_\eps(\tau,\xi,u)\big)},
\end{equation}
where $\tilde{P}_\eps$ is a family of functions parameterized by
$\eps \in [0, \eps_0]$ for a fixed $\eps_0 >0$. The choices of
$\tilde{P}_\eps$ relevant to our analysis are
$\tilde{P}_\eps = i e^{-i\pi a_\eps} \Phi_j P^\pm_{\eps}/2$ and
$\tilde{P}_\eps = i e^{-i\pi a_\eps} \Psi_j P^\pm_{\eps}/2$, with notation of
section~\ref{sec:notation-magnetized}. With this aim, it will be sufficient to
consider a family of functions $\tilde{P}_\eps$ that satisfy the following
conditions: 
\begin{equation}
  \label{eq:polynomial-growth}
  \left\{
  \begin{aligned}
    &\tilde{P}_\eps \in C^\infty(\R^7), && \forall \eps \in [0,\eps_0], \\
    &\big|\partial^\mu \tilde{P}_\eps(\tau,\xi,u)\big| \leq
    \hat{c}_{\mu} (1+\tau^2+\xi^2)^{\hat{m}_\mu} (1+u^2)^{\hat{n}_\mu},  &&
    \forall \eps \in [0, \eps_0],
    \; \forall \mu \in \N_0^7,\\
    & \tilde{P}_\eps (\tau,\xi,u) \to \tilde{P}_0(\tau,\xi,u)
    \text{ for } \eps \to 0^+, && \forall (\tau,\xi,u) \in \R^7,
  \end{aligned}
  \right.
\end{equation}
where the constants $\hat{c}_\mu, \hat{m}_\mu, \hat{n}_\mu \in \R$ are independent of
$\eps \in [0,\eps_0]$ and $(\tau,\xi,u) \in \R^7$.

We state the main result for the limit of~(\ref{eq:r-tilde}) as $\eps \to 0^+$.
We shall see that it is sufficient to consider the case $\kappa_\species = 1$, or
$\nu_\species = \omega_{c,\species}/\gamma$; then $a_\eps$ is independent of the
particle species. This is a valid choice of the damping coefficient
$\nu_\species$, since it satisfies conditions~(\ref{eq:nu-conditions}). 

\begin{proposition}
  \label{th:limit_tilde_r_eps}
  Let $\eps_0, \tau_0 >0$, and let $\tilde{r}_\eps$ be the family of functions
  defined for $\eps \in (0,\eps_0]$ in equation~(\ref{eq:r-tilde}) with
  $\tilde{P}_\eps$ satisfying condition~(\ref{eq:polynomial-growth}), and
  with $\kappa_\species = 1$. Then, for every $(\tau,\xi)$, $\tau \not= 0$, there is
  $\tilde{r}_0(\tau,\xi,\cdot) \in \Schwartz^\prime(\R^3)$,
  such that
  \begin{itemize}
    
  \item[(i)] in the limit $\eps \to 0^+$, 
    $\tilde{r}_\eps(\tau,\xi,\cdot) \to \tilde{r}_0(\tau,\xi,\cdot)$ in
    $\Schwartz^\prime(\R^3)$;
    
  \item[(ii)] for every $\phi \in \Schwartz(\R^3)$ the function
    $(\tau,\xi)\mapsto\big\langle\tilde{r}_0(\tau,\xi,\cdot),\phi\big\rangle$
    is continuous on $(\R\setminus\{0\})\times\R^3$, and $C^\infty$ near
    points $(\tau,\xi)$ such that $\tau^2 \not= \xi_3^2 + n^2$ for all integers
    $n \geq 0$;

  \item[(iii)] for any $\phi \in \Schwartz(\R^3)$ there are reals $K_0,M>0$,
    such that 
    \begin{equation*}
      \big|\big\langle \tilde{r}_\eps(\tau,\xi,\cdot), \phi\big\rangle\big|
      \leq K_0  (1 + \tau^2 +\xi^2)^M, \\
    \end{equation*}
    for all $(\tau,\xi)$ with $|\tau| \geq \tau_0$ and $\eps \in [0,\eps_0]$.
  \end{itemize}
\end{proposition}

\begin{remark}[Degenerate resonances]
  The varieties $\tau^2 = \xi^2 + n^2$ for $n \geq 1$ in the Fourier space
  correspond to plane waves for which the elliptic resonance
  $\mathcal{R}_n(\tau,\xi)$ degenerates to a point. The special case $n=0$, that
  is $\tau^2-\xi^2=0$, corresponds to the parabolic resonance which separates
  elliptic and hyperbolic resonances. The function
  $(\tau,\xi)\mapsto\big\langle\tilde{r}_0(\tau,\xi,\cdot),\phi\big\rangle$ is
  smooth away from such topological transitions, where we can show continuity only.
\end{remark}

For the proof of proposition~\ref{th:limit_tilde_r_eps}, we need
a few preparatory results.
Let us choose a cut-off function $\chi \in C_0^\infty(\R)$ such that
\begin{equation*}
  0 \leq \chi(z) \leq 1, \quad 
  \chi(z) = 1, \text{ for } |z| < \frac{1}{4}, \quad
  \chi(z) = 0, \text{ for } |z| \geq \frac{1}{3},
\end{equation*}
and for any $n \in \Z$, $\delta \in (0,1)$ let 
\begin{equation*}
  \chi_{\delta,n}(\tau,\xi,u) = \chi\big((a_0(\tau,\xi,u)-n)/\delta\big).
\end{equation*}
One can see that $\chi_{\delta,n} \in C^\infty(\R^7)$ but it is not
necessarily compactly supported; in fact, $\chi_{\delta,n}(\tau,\xi,\cdot)$ is
localized around $\mathcal{R}_{n}(\tau,\xi)$ which is unbounded when
$\tau^2 \leq \xi_3^2$.

\begin{lemma}
  \label{th:partition-of-unity}
  If $(\tau,\xi,u) \in \supp (\chi_{\delta,n})$ for a certain $n \in \Z$ then
  $\chi_{\delta,m}(\tau,\xi,u) = 0$ for all $m \not=n$.
\end{lemma}

\begin{proof}
  The set $\supp \chi_{\delta,n}$ is given by the condition $|a_0 - n| \leq \delta/3$, 
  so that
  \begin{equation*}
    |a_0(\tau,\xi,u) - m| \geq |m-n| - |a_0(\tau,\xi,u) -n |
    > 1 - \delta/3 > \delta/3,
  \end{equation*}
  and thus $\chi_{\delta,m}(\tau,\xi,u)=0$.
\end{proof}

It follows from lemma~\ref{th:partition-of-unity} that the sum
\begin{equation}
  \label{eq:cut-off}
  \chi_\delta = \sum_{n \in \Z} \chi_{\delta,n},
\end{equation}
is locally finite and thus it defines a family of smooth functions $\chi_\delta$
for $\delta \in (0,1)$.

\begin{lemma}
  \label{th:on-supp-chi-delta}
  For every $\delta \in (0,1)$, the cut-off function $\chi_\delta$ in
  (\ref{eq:cut-off}) is such that
  \begin{align*}
    \big|\cos\big(\pi a_0(\tau,\xi,u)\big)\big| &\geq \cos(\pi \delta/3) > 0, \quad
    &&\text{ for } (\tau,\xi,u) \in \supp(\chi_\delta), \\
    \big|\sin\big(\pi a_0(\tau,\xi,u)\big)\big| &\geq \sin(\pi \delta/4) > 0, \quad
    &&\text{ for } (\tau,\xi,u) \in \supp(1-\chi_\delta).
  \end{align*}
\end{lemma}

\begin{proof}
  For every point $(\tau,\xi,u)$ in the support of $\chi_\delta$, there is
  an integer $n$ such that
  \begin{equation*}
    |a_0 - n| \leq \delta/3, \qquad
    |\cos(\pi a_0)| = \big|\cos\big(\pi(a_0-n)\big)\big| \geq
    \cos(\pi\delta/3) > 0.
  \end{equation*}
  Analogously, on the support of $1-\chi_\delta$, 
  \begin{equation*}
    |a_0 - n| \geq \delta/4, \qquad
    |\sin(\pi a_0)| = \big|\sin\big(\pi(a_0-n)\big)\big| \geq
    \sin(\pi\delta/4) > 0,
  \end{equation*}
  for all integers $n$.
\end{proof}
  
The cut-off function $\chi_\delta$ allows us to isolate the singularities of
(\ref{eq:r-tilde})  when $\eps \to 0^+$.
If $\nu_\species$ in equation~(\ref{eq:norm-omega-k}) is chosen so that
$\kappa_\species = 1$, for $(\tau,\xi,u) \in \supp(\chi_\delta)$ we can write 
\begin{equation*}
  \sin(\pi a_\eps) = \cos(\pi a_0) \big[
    \tan(\pi a_0) \cosh(\pi\eps) + i\sinh(\pi\eps) \big], 
\end{equation*}
and
\begin{equation}
  \label{eq:resonance-trick}
  \frac{1}{\sin(\pi a_\eps)} = \frac{-i}{\cos(\pi a_0)}
  \int_0^{+\infty} e^{i\lambda [\tan(\pi a_0) \cosh(\pi\eps) + i\sinh(\pi\eps)]}
  d\lambda.
\end{equation}
Therefore, for any $\psi \in C_0^\infty(\R^3)$,
\begin{equation}
  \label{eq:partition-of-unity}
  \langle \tilde{r}_\eps(\tau,\xi,\cdot), \psi\rangle =
  - i \int_0^{+\infty} e^{-\lambda \sinh(\pi\eps)}
  \mathcal{I}_{\delta,\eps}^c(\psi)(\tau,\xi,\lambda) d\lambda
  + \mathcal{I}_{\delta,\eps}^s(\psi)(\tau,\xi),
\end{equation}
where we have defined the functions
\begin{subequations}
  \label{eq:Psi-delta}
  \begin{align}
    \label{eq:complex-phase}
    \vartheta_\eps (\tau,\xi,u) &\coloneqq \tan\big(\pi a_0(\tau,\xi,u)\big)
    \cosh(\pi\eps), \\
    \label{eq:Psi-delta-sing}
    \mathcal{I}_{\delta,\eps}^c(\psi) (\tau,\xi,\lambda) &\coloneqq
    \int_{\R^3} e^{i\lambda \vartheta_\eps(\tau,\xi,u)}
    \frac{\tilde{P}_\eps(\tau,\xi,u) \psi(u)}{\cos\big(\pi a_0(\tau,\xi,u)\big)}
    \chi_\delta(\tau,\xi,u) du, \\
    \label{eq:psi-delta-reg}
    \mathcal{I}_{\delta,\eps}^s(\psi) (\tau,\xi) &\coloneqq
    \int_{\R^3} \frac{\tilde{P}_\eps(\tau,\xi,u) \psi(u)}{
      \sin\big(\pi a_\eps(\tau,\xi,u)\big)}
    \big(1-\chi_{\delta}(\tau,\xi,u)\big)du. \\
    \intertext{We also define}
    \label{eq:Psi-sing}
    \mathcal{I}_{\delta,0}^c(\psi) (\tau,\xi,\lambda) &\coloneqq
    \int_{\R^3} e^{i\lambda \tan(\pi a_0(\tau,\xi,u))}
    \frac{\tilde{P}_0(\tau,\xi,u) \psi(u)}{\cos\big(\pi a_0(\tau,\xi,u)\big)}
    \chi_\delta(\tau,\xi,u) du, \\
    \label{eq:psi-reg}
    \mathcal{I}_{\delta,0}^s (\psi)(\tau,\xi) &\coloneqq
    \int_{\R^3} \frac{\tilde{P}_0(\tau,\xi,u) \psi(u)}{
      \sin\big(\pi a_0(\tau,\xi,u)\big)}
    \big(1-\chi_{\delta}(\tau,\xi,u)\big)du.
  \end{align}
\end{subequations}

The main step in the proof of
proposition~\ref{th:limit_tilde_r_eps} consists of an application 
of the stationary phase formula \cite[Chapter 7]{Hormander1} in order to prove
that $\mathcal{I}^c_{\delta,\eps}(\psi)(\tau,\xi,\cdot)$ is bounded in $L^1$
uniformly in $\eps$. The real-valued phase is given by $\vartheta_\eps$ and the
parameter is $\lambda \geq 1$. All technical results needed in the proof of
proposition~\ref{th:limit_tilde_r_eps} are collected in
section~\ref{sec:stationary-phase}, below.

\begin{proof}[Proof of proposition~\ref{th:limit_tilde_r_eps}]
  We start from identity~(\ref{eq:partition-of-unity}),
  \begin{equation*}
    \langle \tilde{r}_\eps(\tau,\xi,\cdot), \psi\rangle =
    -i\int_0^{+\infty} e^{-\lambda \sinh(\pi\eps)}
    \mathcal{I}^c_{\delta,\eps}(\psi)(\tau,\xi,\lambda)d\lambda +
    \mathcal{I}^s_{\delta,\eps}(\psi)(\tau,\xi),
  \end{equation*}
  for $\psi \in C_0^\infty (\R^3)$ and
  lemma~\ref{th:application-of-stationary-phase} (i) shows that this is extended
  to $\phi \in \Schwartz(\R^3)$.
  
  For every $\delta \in (0,1)$, lemma~\ref{th:application-of-stationary-phase}
  (iii) and (v) allows us to define the functional 
  \begin{equation}
    \label{eq:tilde-r0}
    \big\langle \tilde{r}_{\delta,0}(\tau,\xi,\cdot), \phi\big\rangle =
    - i \int_0^{+\infty} \mathcal{I}_{\delta,0}^c(\phi)(\tau,\xi,\lambda) d\lambda
    + \mathcal{I}_{\delta,0}^s(\phi)(\tau,\xi),
  \end{equation}
  over $\Schwartz(\R^3)$ for every $(\tau,\xi)$, $\tau\not=0$.  

  In view of lemma~\ref{th:application-of-stationary-phase} (ii), as $\eps \to 0^+$,
  we have $\mathcal{I}^c_{\delta,\eps} (\phi)(\tau,\xi) \to
  \mathcal{I}^c_{\delta,0}(\phi)(\tau,\xi)$ and
  $\mathcal{I}^s_{\delta,\eps} (\phi) (\tau,\xi) \to
  \mathcal{I}^s_{\delta,0}(\phi) (\tau,\xi)$ for all 
  $\phi \in \Schwartz(\R^3)$.
  By the dominated convergence theorem and
  lemma~\ref{th:application-of-stationary-phase} (iii) we deduce 
  \begin{equation*}
    \int_0^{+\infty} e^{-\lambda \sinh(\pi\eps)}
    \mathcal{I}^c_{\delta,\eps}(\phi)(\tau,\xi,\lambda)d\lambda \to
    \int_0^{+\infty} \mathcal{I}^c_{\delta,0}(\phi)(\tau,\xi,\lambda)d\lambda,
  \end{equation*}
  for all $\phi \in \Schwartz(\R^3)$, hence
  $\tilde{r}_\eps (\tau,\xi,\cdot)  \to \tilde{r}_{\delta,0}(\tau,\xi,\cdot)$ in
  $\Schwartz^\prime(\R^3)$. By uniqueness of the limit, we have that
  $\tilde{r}_{\delta,0}(\tau,\xi,\cdot)$ is the same tempered distribution for
  all $\delta \in (0,1)$ which we denote by $\tilde{r}_0(\tau,\xi,\cdot)$. This
  proves (i).

  As for the regularity of $\langle \tilde{r}_0(\tau,\xi,\cdot), \phi \rangle$
  with respect to $(\tau,\xi)$,
  lemma \ref{th:application-of-stationary-phase} (i) show in particular that
  $\mathcal{I}_{\delta,0}^s(\phi) \in C^\infty$, hence it is enough to address
  \begin{equation*}
    \langle \tilde{r}^c_{\delta,0}(\tau,\xi,\cdot), \phi \rangle \coloneqq
    \langle \tilde{r}_{\delta,0}(\tau,\xi,\cdot), \phi \rangle -
    \mathcal{I}_{\delta,0}^s(\phi)(\tau,\xi) =
    -i \int_0^{+\infty} \mathcal{I}^c_{\delta,0}(\phi)(\tau,\xi,\lambda)
    d\lambda.
  \end{equation*}
  In lemma \ref{th:application-of-stationary-phase} (i) we have established that
  $\mathcal{I}^c_{\delta,0}(\phi)$ is $C^\infty$, and the inequality in item (iii) of
  the same lemma \ref{th:application-of-stationary-phase} gives a function
  $\tilde{\mathcal{B}}_{\eps_0,\delta} \in L^1(\R_+)$ such that 
  \begin{equation*}
    |\mathcal{I}^c_{\delta,0}(\phi)(\tau,\xi,\lambda)| \leq
    \tilde{\mathcal{B}}_{\eps_0,\delta} (\lambda),
  \end{equation*}
  uniformly in $(\tau,\xi) \in K$ where $K$ is any compact where $|\tau| > 0$.
  Then the dominated convergence theorem can be applied to show that
  $\langle \tilde{r}^c_{\delta,0}(\tau,\xi,\cdot), \phi \rangle$
  is continuous at any point $(\tau,\xi)$, where $|\tau| > 0$.
  
  Concerning the derivatives of $\mathcal{I}^c_{\delta,0}(\phi)$ with respect to
  $(\tau,\xi)$, let as fix a point $(\bar{\tau}, \bar{\xi})$ such that
  $|\bar{\tau}| \not=0$ and $\bar{\tau}^2 \not= \bar{\xi}_3^2 + n^2$ for all
  $n \in \N_0$. For a sufficiently small radius $\rho>0$ the closed ball
  $K = \{(\tau,\xi) :\; (\tau - \bar{\tau})^2 + (\xi - \bar{\xi})^2 \leq \rho^2\}$ 
  satisfies the assumptions of lemma~\ref{th:application-of-stationary-phase}
  item (iv). Therefore, there is a value of $\delta$ depending only on $K$ and an
  upper bound $|\partial_{\tau,\xi}^\alpha
  \mathcal{I}^c_{\delta,\eps}(\phi)(\tau, \xi, \lambda)| \leq
  \mathcal{B}^{(\alpha)}_{\eps_0,\delta}(\lambda, \tau_*)$ with
  $\mathcal{B}^{(\alpha)}_{\eps_0,\delta}(\cdot, \tau_*) \in L^1$,
  where $\tau_* = \min |\tau|$ in $K$.
  Hence, $\langle \tilde{r}^c_{\delta,0}(\tau,\xi,\cdot), \phi \rangle$ is of class
  $C^\infty$ near any point where $|\tau|>0$ and $\tau^2 \not = \xi_3^2 + n^2$,
  as claimed in (ii).
  
  Again lemma \ref{th:application-of-stationary-phase}, items (iii) and (v),
  imply
  \begin{multline*}
    |\langle \tilde{r}_\eps(\tau,\xi,\cdot),\phi\rangle| \leq
    \|\mathcal{B}_{\eps_0,\delta}(\cdot,\tau_0)\|_{L^1(\R_+)}
    (1 + \tau^2 + \xi^2)^{m_4 + \frac{5}{2}} \|\phi\|_{2\tilde{m}_4 + 10} \\
    + K^s_0 (1 + \tau^2 + \xi^2)^{\ell_{0,0}} \|\phi\|_{2 \tilde{\ell}_{0,0} + 4},
  \end{multline*}
  uniformly in $(\tau,\xi)$ where $|\tau| \geq \tau_0 > 0$, where $m_j$,
  $\tilde{m}_j$, $\ell_{0,0}$and $\tilde{\ell}_{0,0}$ have been defined in
  lemma~\ref{th:estimates-of-integrands}
  and~\ref{th:application-of-stationary-phase}, respectively. This estimate is
  uniform in $\eps \in [0,\eps_0]$.
  Since $m_4 \geq \ell_{0,0}$, we obtain claim (iii) with constant $K_0$
  depending in particular on $\tau_0$, $\eps_0$ and $\delta$ and with $M=m_4+ 5/2$.   
\end{proof}

\begin{remark}
  The estimate in proposition~\ref{th:limit_tilde_r_eps} item (iii) could be
  replaced by 
   \begin{equation*}
     \big|\big\langle \tilde{r}_{\eps}(\tau,\xi,\cdot), \phi\big\rangle\big|
     \leq K_0(\tau)  (1 + \tau^2 +\xi^2)^M, \qquad \tau \not=0,
   \end{equation*}
   but $K_0(\tau)$ is not bounded near $\tau=0$. Hence our argument does
   not allow any conclusion for $\tau=0$ and we have excluded all frequencies in
   $|\tau| < \tau_0$ with $\tau_0$ arbitrarily small and fixed.  
\end{remark}

\subsection{Solution of the linear Vlasov equation for the magnetized case} 
\label{sec:solution-Vlasov-magnetized}

We address equation~(\ref{eq:magnetized-uniform-g}) for a given
species $\species$, and thus drop the subscript $\species$ for simplicity.
Particularly, we shall denote by $Q_j$ the differential operator
$Q_{\species,j}$ defined in equation~(\ref{eq:Qj-operator}), which depends on the
equilibrium distribution function and the sign of the electric charge of the
consider particle species.

We shall first address the existence of solutions of the linear Vlasov
equation~(\ref{eq:magnetized-uniform-g-damped}) for the time-derivative of the
distribution function including a damping term and address its dissipation-less
limit. The result will then be used to compute the time-derivative of the
induced current~(\ref{eq:djnu_dt}).

\begin{theorem}
  \label{th:magnetized-uniform-g}
  Let $\eps > 0$ and let $\nu$ be any function satisfying 
  conditions~(\ref{eq:nu-conditions}).
  Then equation~(\ref{eq:magnetized-uniform-g-damped}) has a solution  
  $g_\eps \in \Schwartz(\R^7)$ which is unique as an element of
  $\Schwartz^\prime(\R^7)$ and:  
  
  \begin{itemize}

  \item[(i)] The Fourier transform of the unique tempered solution $g_\eps$ is 
    \begin{equation*}
      \hat{g}_\eps (\omega,k,u) =
      \frac{q n_0}{(m c)^4} \sum_{j=1}^3 \hat{E}_j(\omega,k)
      Q_j (\tau,\xi,u,\partial_\xi) r^\pm_\eps(\tau,\xi,u),
    \end{equation*}
    where the sign $+$ (resp. $-$) is chosen for $q >0$ (resp. $q <0$).

  \item[(ii)] For $(u_1,u_2) \not=(0,0)$,
    \begin{equation*}
      \hat{g}_\eps (\omega,k,u) = i\frac{q n_0}{(m c)^4}
      \sum_{j=1}^3 \hat{E}_j(\omega,k)
      \sum_{n\in\Z} \frac{Q_j (\tau,\xi,u,\partial_\xi)
        A^\pm_n(b)}{a_\eps - n} e^{+in\phi},
    \end{equation*}
    where $\phi$ is defined in~(\ref{eq:cylindrical}) and $b = (b_1,b_2)$
    in~(\ref{eq:a-and-bs}). 
    
  \end{itemize}
\end{theorem}

\begin{remark}
  The expression of the solution in item (i) in terms of the integral
  $r^\pm_\eps$ defined in (\ref{eq:r-pm-eps}) was first proposed by 
  Qin et al. \cite{Qin2007}, cf. also the subsequent discussion in the
  literature \cite{Lerche2008, Qin2008}.   
\end{remark}

\begin{remark}
  \label{rem:resonances}
  The solution $\hat{g}_\eps$ given in item (ii) is in agreement
  with the standard expression obtained in the physics literature
  \cite{Bornatici1983, Brambilla1998, Stix1992}.
  For $\eps \to 0^+$ it exposes a
  countable number of poles for $a_0 \in \Z$ where $\sin(\pi a_0) = 0$. 
  These poles correspond to cyclotron resonances briefly discussed in
  section~\ref{subsec:eq:a0}. 
\end{remark}

\begin{proof}[Proof of theorem \ref{th:magnetized-uniform-g}]
  We look for tempered solutions of
  equation~(\ref{eq:magnetized-uniform-g-damped}) and 
  thus we can equivalently consider the Fourier transform $\hat{g}_\eps$ of
  the distribution function $g_\eps$. If we define
  \begin{equation}
    \label{eq:h-eps}
    \hat{h}_\eps = e^{\pm i(\xi_2 u_1 - \xi_1 u_2)} \hat{g}_\eps,
  \end{equation}
  where $\tau = \omega/\omega_{c,\species}$, $\xi = ck/\omega_{c,\species}$, and
  with sign $+$ (resp. $-$) for $q>0$ (resp. $q<0$), the Fourier transform of
  equation~(\ref{eq:magnetized-uniform-g-damped}) is equivalent to
  equation~(\ref{eq:generic-magnetized-kinetic}) with source  
  \begin{equation*}
    \hat{s}^\pm = \frac{|q| n_0}{(m c)^4} e^{\pm i (\xi_2 u_1 - \xi_1 u_2)}
    \sum_{j=1}^3 \hat{E}_j Q_j(\tau,\xi,u,\partial_\xi)
    e^{\pm i (\xi_2 u_1 - \xi_1 u_2)},
  \end{equation*}
  and with the sign chosen according to the electric charge $q$. (As in the
  proof of proposition \ref{th:lemma-4.0}, in the case $q<0$ we can multiply by
  $-1$ the equation for $\hat{h}_\eps$ in order recast it into the form of
  (\ref{eq:generic-magnetized-kinetic}); hence $a = - a_\eps$.) Under the
  hypotheses, $s^\pm$ belongs to $\Schwartz(\R^7)$.
  Proposition~\ref{th:psi-psi-Schwartz} gives a solution
  $\hat{h}_\eps \in \Schwartz(\R^7)$,
  and thus $\hat{g}_\eps \in \Schwartz(\R^7)$. This is the unique solution in
  $C^\infty$, as proven in proposition~\ref{th:psi-psi-Schwartz}, as well as in
  $\Schwartz^\prime(\R^7)$, as proven in proposition~\ref{th:uniqueness-magnetized}. 
  
  (i) Substitution of the claimed expression $\hat{g}_\eps$
  into~(\ref{eq:h-eps}) gives
  \begin{equation*}
    \hat{h}_\eps(\omega,k,u) = \frac{q n_0}{(m c)^4}
    \sum_{j=1}^3 \hat{E}_j(\omega,k) e^{\pm i(\xi_2 u_1 - \xi_1 u_2)}
    Q_j (\tau,\xi,u,\partial_\xi) r^\pm_\eps(\tau,\xi,u),
  \end{equation*}
  where $\tau = \omega/\omega_{c,\species}$, $\xi = ck/\omega_{c,\species}$,
  and we observe that the operator $e^{\pm i(\xi_2u_1-\xi_1 u_2)} Q_j$
  commutes with $u_2\partial_{u_1} - u_1 \partial_{u_2}$. Then upon using
  lemma~\ref{th:lemma-4.0}, we have that $\hat{h}_\eps$
  solves~(\ref{eq:generic-magnetized-kinetic}) and thus $\hat{g}_\eps$ is the
  solution of equation~(\ref{eq:magnetized-uniform-g}).  
  
  (ii) Upon using coordinates (\ref{eq:cylindrical}), for either choice of the
  sign of the particle charge the equation for $\hat{h}_\eps$ reduces to
  (\ref{eq:ode}) with $\tilde{a} = a_\eps$ and with source
  \begin{equation*}
    V(\phi) = \frac{q n_0}{(mc)^4}
    e^{\pm i(\xi_2u_1-\xi_1 u_2)} \sum_{j=1}^3 \hat{E}_j Q_j e^{\pm i(\xi_2u_1 - \xi_1u_2)}.
  \end{equation*}
  We evaluate the Fourier coefficients of the source, that is,
  \begin{align*}
    \hat{V}_n &= \frac{q n_0}{(mc)^4} \frac{1}{2\pi} \int_0^{2\pi} e^{-in\phi}
    e^{\pm i(\xi_2u_1-\xi_1 u_2)} \sum_{j=1}^3 \hat{E}_j Q_j 
    e^{\pm i(\xi_2u_1 - \xi_1u_2)} d\phi \\
    &= \frac{q n_0}{(mc)^4}
    e^{\pm i(\xi_2u_1-\xi_1 u_2)} \sum_{j=1}^3 \hat{E}_j Q_j
    \frac{1}{2\pi} \int_0^{2\pi} e^{-in\phi}
    e^{\pm i(\xi_2u_1 - \xi_1u_2)} d\phi,
  \end{align*}
  and, in the second identity, we have used the fact that the coefficients of the
  operator $e^{\pm i(\xi_2u_1-\xi_1 u_2)} Q_j$ are independent of $\phi$,
  i.e., the exponential factor is canceled by the corresponding factor in
  the definition of the coefficients in equations~(\ref{eq:psi-coeff}).
  It is now sufficient to compute the Fourier coefficients of
  \begin{equation*}
    e^{\pm i(\xi_2u_1 - \xi_1u_2)} = e^{\pm i(b_2 \cos\phi \pm b_1 \sin\phi)},
  \end{equation*}
  and those are equal to $A_n^\pm(b)$ as we have shown in
  equation~(\ref{eq:Anpm-der}). Then
  \begin{equation*}
    \hat{V}_n = \frac{q n_0}{(mc)^4} e^{\pm i(\xi_2u_1-\xi_1 u_2)}
    \sum_{j=1}^3 \hat{E}_j Q_j A^\pm_n.
  \end{equation*}
  The Fourier expansion in lemma
  \ref{th:ode-existence-main} then yields the claimed identity.
\end{proof}

We shall now show that the solution $g_\eps$ for $\eps \to 0^+$ approaches the
causal solution of (\ref{eq:magnetized-uniform-g}) as computed by integration
along the characteristics curves. This result in addition clarifies the relation
between the analytical expressions for the solution in Fourier and physical
spaces. In the position-momentum variables, the characteristic curve
$t' \mapsto \big(X(t'; t,x,p), P(t';t,x,p) \big)$ with terminal condition
$(x,p)$ at $t'=t$ is \cite{Bornatici1983}    
\begin{equation}
  \label{eq:magnetized-characteristics}
  \left\{
  \begin{aligned}
    X_1(t'; t,x,p) &= x_1
    - \frac{p_1}{m\gamma \Omega} \sin\big(\Omega \cdot (t-t') \big)
    - \frac{p_2}{m\gamma \Omega} \big[\cos\big(\Omega \cdot (t-t') \big) -1\big],
    \\
    X_2(t'; t,x,p) &= x_2
    + \frac{p_1}{m\gamma \Omega} \big[\cos\big(\Omega \cdot (t-t') \big) -1\big]
    - \frac{p_2}{m\gamma \Omega} \sin\big(\Omega \cdot (t-t') \big),
    \\
    X_3(t'; t,x,p) &= x_3 - v_3 (t-t'),
    \\
    P_1(t'; t,x,p) &= p_1\cos\big(\Omega \cdot (t-t') \big)
    - p_2 \sin\big(\Omega \cdot (t-t') \big),
    \\
    P_2(t'; t,x,p) &= p_1 \sin\big(\Omega \cdot (t-t') \big)
    + p_2 \cos\big(\Omega \cdot (t-t') \big),
    \\
    P_3(t'; t,x,p) &= p_3,
  \end{aligned}\right.
\end{equation}
where $\gamma$ and thus $\Omega$ are constants of motion. Under
assumption~(\ref{eq:nu-conditions}) $\nu$ is constant along the characteristics
as well. The characteristic flow has the semi-group property: 
for every $t_1, t_2, t_3$ with $t_1 \leq t_2 \leq t_3$ and $(x,p)$,
\begin{equation}
  \label{eq:magn-semi-group}
  \begin{aligned}
    X(t_1; t_2, X(t_2; t_3,x,p), P(t_2; t_3,x,p) ) &= X(t_1; t_3,x,p), \\
    P(t_1; t_2, X(t_2; t_3,x,p), P(t_2; t_3,x,p) ) &= P(t_1; t_3,x,p),
  \end{aligned}
\end{equation}
which can be verified directly from~(\ref{eq:magnetized-characteristics}). For
$E$ and $F_0$ at least in $C^1$, let
$\mathfrak{s} = -q (\partial_t E - v \times \nabla \times  E) \cdot \nabla_p F_0$ for
brevity, and $F_0 = n_0 G$ as in (\ref{eq:gyrotropic}). We shall always imply
the relation $u = p / (mc)$ between normalized and physical momentum variables.

\begin{proposition}
  \label{th:magnetized-characteristics}
  For every $\eps > 0$, $E \in [\Schwartz(\R^4)]^3$,
  $F_0 = n_0 G \in \Schwartz(\R^3)$, and $g_{0,\eps} \in \Schwartz(\R^6)$,
  the Cauchy problem
  for equation~(\ref{eq:magnetized-uniform-g}) with initial condition
  $g_{0,\eps}$ at time $t=t_0$ has a unique classical solution
  $g_\eps \in C^\infty(\R^7)$ and it holds that:
  \begin{itemize}
  \item[(i)] There is a unique Cauchy datum for which the solution 
    $g_\eps \in \Schwartz^\prime$ and that is given by
    \begin{equation*}
      g_{*,\eps}(x,u) \coloneqq \int_{-\infty}^{t_0} e^{-\eps\nu(t_0-t')}
      \mathfrak{s}\big(t', X(t';t_0,x,p), P(t';t_0,x,p) \big)dt',
    \end{equation*}
    which belongs to $\Schwartz(\R^6)$.
  \item[(ii)] For $g_{0,\eps} = g_{*,\eps}$ the solution is
    \begin{equation*}
      g_\eps (t,x,u) = \int_{-\infty}^t e^{-\eps\nu(t-t')}
      \mathfrak{s}\big(t', X(t';t,x,p), P(t';t,x,p) \big)dt',
    \end{equation*}
    and $\hat{g}_\eps$ is the same function defined in
    theorem~\ref{th:magnetized-uniform-g} (i).
  \item[(iii)] For $\eps \to 0^+$, the function $g_\eps$ has a pointwise limit
    \begin{equation*}
      g (t,x,u) = \int_{-\infty}^t
      \mathfrak{s}\big(t', X(t';t,x,p), P(t';t,x,p) \big)dt',
    \end{equation*}
    for every $(t,x,u) \in \R^7$. The limit $g$ belongs to $C^\infty_b(\R^7)$,
    it is a classical solution of equation~(\ref{eq:magnetized-uniform-g}),
    and it is independent of the choice of $\nu$.
  \end{itemize}
\end{proposition}

\begin{remark}
  The first part of this statement is essentially Wollman's result on the linear
  Vlasov equation \cite[theorem 3.1]{Wollman1987}, but for the case of a uniform
  plasma equilibrium and with somewhat relaxed hypotheses on the support of the
  initial datum.
\end{remark}

\begin{proof}
  By hypothesis the damping function $\nu$  is constant along the characteristics. 
  The application of the standard method of characteristics \cite{John1980} gives
  the unique classical solution of the Cauchy problem, 
  \begin{multline*}
    g_\eps (t,x,u) = g_{0,\eps} \big(X(t_0;t,x,p), P(t_0;t,x,p) \big)
    e^{-\eps \nu (t-t_0)} \\
    + \int_{t_0}^t e^{-\eps \nu(t-t')}
    \mathfrak{s}\big(t', X(t';t,x,p), P(t';t,x,p) \big)dt',
  \end{multline*}
  where $X$ and $P$ are given in equation~(\ref{eq:magnetized-characteristics}),
  and $g_\eps \in C^\infty$ follows by Leibniz rule for differentiation
  of integrals. This completes the first part of the proposition.

  Since $e^{-\eps\nu(t-t')}$ is integrable for $t' \in (-\infty, t]$, the integral
  \begin{equation*}
    H_\eps(t, x, p) = \int_{-\infty}^t e^{-\eps\nu(t-t')}
    \mathfrak{s}\big(t', X(t';t,x,p), P(t';t,x,p) \big)dt'
  \end{equation*}
  is finite. If the electric field is given in Fourier transform, using
  \begin{equation*}
    E(t,x) = \frac{1}{(2\pi)^4} \int_{\R^4} e^{-i\omega t + i k\cdot x}
    \hat{E}(\omega,k) d\omega dk,
  \end{equation*}
  and applying Fubini's theorem, we get 
  \begin{equation*}
    H_\eps(t, x, p) = \frac{1}{(2\pi)^4}
    \int_{\R^4} e^{-i\omega t + i k \cdot x}
    \tilde{H}_\nu(t,\omega,k,p) d\omega dk,
  \end{equation*}
  with, using $F_0 = n_0 G$,
  \begin{equation*}
    \tilde{H}_\eps(t,\omega,k,p) = \frac{q n_0}{(m c)^4}
    \sum_{j=1}^3 \hat{E}_j(\omega,k) Q_j (\tau,\xi,u,\partial_\xi) 
    R^\pm_\eps (t, \omega, k, p),
  \end{equation*}
  where $Q_j$ are the operators defined in (\ref{eq:Qj-operator}) for the
  considered particle species, and
  \begin{multline*}
    R^\pm_\eps(t,\omega,k,p) = \frac{\omega_c}{\gamma} \int_{-\infty}^t
    e^{-\eps\nu (t-t') + i \omega (t-t') -i k_\parallel v_\parallel (t-t')} \\
    e^{\pm i (\xi_2 u_1 - \xi_1 u_2) \cos(\frac{\omega_c}{\gamma}(t-t'))
      -i (\xi_1 u_1 + \xi_2 u_2)\sin(\frac{\omega_c}{\gamma} (t-t'))} dt',
  \end{multline*}
  with sign chosen according to the particle charge $q$.
  The change of variable $\lambda = \frac{\omega_c}{\gamma} (t-t')$ shows that
  $R^\pm_\eps$ is actually independent of time $t$ and
  $R^\pm_\eps(t,\omega,k,p) = r^\pm_\eps(\tau, \xi, u)$ where $r_\eps^\pm$ has been
  defined in~(\ref{eq:r-pm-eps}). Then, $\tilde{H}_\eps$ is also independent of
  time and $\tilde{H}_\eps(t,\omega,k,p) = \hat{g}_\eps(\omega,k,p)$ with
  $\hat{g}_\eps$ the unique tempered solution established in theorem
  \ref{th:magnetized-uniform-g} (i). We have $\hat{g}_\eps \in \Schwartz$ and
  thus $H_\eps$ is the inverse Fourier transform of a Schwartz function,
  so that $H_\eps \in \Schwartz(\R^7)$.
  By definition $g_{*,\eps}(x,p)=H_\eps(t_0,x,p) = g_\eps(t_0,x,p)$, hence
  $g_{*,\eps} \in \Schwartz$ and it is the Cauchy datum of the unique 
  tempered solution $g_\eps = H_\eps$. This proves (i) and item (ii) follows
  from the expression for $H_\eps$. 

  As for the pointwise limit of the solution, item (iii),
  we observe that the characteristic flow and the source term $\mathfrak{s}$
  satisfy the hypothesis of proposition~\ref{th:causal}; in particular, the
  flow~(\ref{eq:magnetized-characteristics}) satisfies assumptions (i)-(iv) of
  appendix~\ref{sec:causal-solutions}, while $\mathfrak{s}$ is defined as the
  sum of the products of a function in $\Schwartz(\R^4)$ in frequency and
  wave-vector and a function in $\Schwartz(\R^3)$ in momentum, hence
  $\mathfrak{s}\in\Schwartz(\R^7)$. The function $g$ is then the causal solution
  of~(\ref{eq:vlasov-magnetized-uniform}) in the sense of
  appendix~\ref{sec:causal-solutions} and proposition~\ref{th:causal} gives
  $g \in C_b^\infty(\R^7)$. For every $m \in \N_0$
  one has the estimate
  \begin{multline*}
    (1 + s^2)^m \big|\mathfrak{s}\big(s, X(s;t,x,p), P(s;t,x,p)\big)\big| \\
    \leq (1 + s^2 + X(s;t,x,p)^2 + P(s;t,x,p)^2)^m
    \big|\mathfrak{s}\big(s, X(s;t,x,p), P(s;t,x,p)\big)\big|,
  \end{multline*}
  which gives
  \begin{equation*}
    \big|\mathfrak{s}\big(s, X(s;t,x,p), P(s;t,x,p)\big)\big| \leq \
    \frac{\|\mathfrak{s}\|_{2m}}{(1+s^2)^m},
  \end{equation*}
  where $\|\cdot\|_m$ are the semi-norms defined in appendix~\ref{sec:basic-def}.
  Therefore, for every $(t,x,p)$ and for $m$ large enough, dominated convergence
  allows us to pass to the limit $\eps \to 0^+$ in the integrand.
\end{proof}

\subsection{Current density and conductivity operator} 
\label{sec:Limiting-absorption-magnetized}

First we show that the functions $r_\eps^\pm$ in
lemma~\ref{th:lemma-4.0} have a limit in $\Schwartz^\prime$. For simplicity let  
\begin{equation}
  \label{eq:zeta-pm}
  \zeta^\pm(\xi_1,\xi_2,u_1,u_2,\lambda) = (\xi_1 u_1 + \xi_2 u_2) \sin\lambda 
  \mp (\xi_2 u_1 - \xi_1 u_2) \cos\lambda,
\end{equation}
and we recall the definition of $a_0$ in equation~(\ref{eq:a-and-bs}) and
normalized Fourier variables~(\ref{eq:norm-omega-k}). 

\begin{proposition}
  \label{th:r_0}
  For every integer $m\geq 2$,
  \begin{itemize}

  \item[(i)] the linear map 
    \begin{align*}
      \psi &\mapsto  
      \int \int_0^1 e^{ia_0(\tau, \xi, u) \lambda
        - i \zeta^\pm(\xi_1, \xi_2, u_1,u_2, \lambda)}
      d\lambda \psi(\tau,\xi,u) d\tau d\xi du \\
      & + i^m \int \int_1^{+\infty} \lambda^{-m}
      e^{ia_0(\tau, \xi, u) \lambda - i \zeta^\pm(\xi_1, \xi_2, u_1,u_2, \lambda)} d\lambda
      \frac{\partial_\tau^m \psi(\tau,\xi,u)}{\gamma(u)^m} 
      d\tau d\xi du,
    \end{align*}
    is continuous on $\Schwartz(\R^7)$ and thus defines a
    tempered distribution $r^{\pm,m}_0$;
    
  \item[(ii)] for $\eps\to 0^+$, $r_\eps^\pm$ has a limit $r^\pm_0$ in
    $\Schwartz^\prime$, and $r^\pm_0 = r^{\pm,m}_0$ for all $m \geq 2$. The limit
    is independent of the choice of the function $\nu_\species$.

  \end{itemize}
\end{proposition}

\begin{proof}
  The two terms defining the linear map on $\psi$ are bounded by the norms
  $\|\psi\|_8$ and $\|\psi\|_{m+8}$, respectively, and this shows continuity in
  the topology of $\Schwartz(\R^7)$. Then the map is a tempered distribution
  which is denoted by $r_0^{\pm,m}$. Since
  \begin{equation*}
    \frac{(-i)^m}{\gamma^m \lambda^m}
    \partial_\tau^m e^{ia_\eps \lambda - i\zeta^\pm} =
    e^{ia_\eps \lambda - i\zeta^\pm},
  \end{equation*}
  after integration by parts in $\tau$,
  \begin{multline*}
    \langle r_\eps^\pm, \psi\rangle = \int \int_0^1
    e^{ia_\eps \lambda - i\zeta^\pm} d\lambda \psi d\tau d\xi du \\
    + i^m \int \Big[\int_1^{+\infty} \frac{e^{ia_\eps \lambda - i\zeta^\pm}}{\lambda^m}
    d\lambda \Big] \frac{\partial_\tau^m \psi}{\gamma^m} d\tau d\xi du,
  \end{multline*}
  and if $m\geq 2$ the integrand is uniformly bounded by an integrable
  function. We can then pass to the limit in the integral and obtain
  \begin{equation*}
    \langle r_\eps^\pm, \psi\rangle \xrightarrow[]{\eps \to 0^+}
    \langle r^{\pm,m}_0, \psi\rangle,
  \end{equation*}
  for every $\psi \in \Schwartz(\R^7)$ and every $m\geq 2$.
  Uniqueness of the limit implies that all distributions $r^{\pm,m}_0$ for $m\geq 2$
  are equal to the limit $r^\pm_0 = \lim_{\eps \to 0^+} r_\eps^\pm$.
\end{proof}

This result is already sufficient to compute the limit of the time derivative
of the induced current density as a tempered distribution under fairly general
assumptions. In the following we define, for any $\species$ labeling a particle
species, 
\begin{equation*}
  r_{\species,\eps} (\tau,\xi,u) \coloneqq r^{\pm}_\varepsilon(\tau, \xi,\nu),
  \quad r_{\species} \coloneqq r^{\pm}_0,
\end{equation*}
with sign $+$ (resp. $-$) for $q_\species >0$ (resp. $q_\species < 0$), with 
$r_\eps^\pm$ defined in equation~(\ref{eq:r-pm-eps}), and with $r^\pm_0$ being
the distributional limit in lemma~\ref{th:r_0}.

By definition, cf. equation~(\ref{eq:djnu_dt}), the current density associated
to $\{g_{\species,\eps}\}$ is
\begin{equation*}
  \partial_t j_\eps = \mathbb{K}(\{g_{\species,\eps}\}).
\end{equation*}
We recall that $\omega_{p,\species} = \sqrt{4\pi q_\species^2 n_{\species,0} / m_\species}$
is the plasma frequency of the $\species$-th species. We also need the operator
$Q_{\species,j}$ defined in equation~(\ref{eq:Qj-operator}).
Relations~(\ref{eq:norm-omega-k}) are also implied.

\begin{proposition}
  \label{th:magnetized-j-eps}
  The function $\partial_t j_\eps$ belongs to $[\Schwartz(\R^4)]^3$ and
  \begin{subequations}
    \label{eq:chi-eps-tensor}  
    \begin{equation}
      \label{eq:FT-Ohms-law-magnetized}
      \widehat{\partial_t j_\eps}(\omega,k) = \hat{\varsigma}_{\eps} (\omega,k)
      \hat{E}(\omega,k),
    \end{equation}
    where the tensor $\hat{\varsigma}_\eps$ is given component-wise by
    \begin{equation}
      \label{eq:varsigma-eps-integral}
      \hat{\varsigma}_{\eps,ij}(\omega,k) =
      \sum_\species \frac{\omega_{p,\species}^2}{4\pi}
      \int_{\R^3} \frac{1}{\gamma(u)} u_i
      Q_{\species,j} (\tau,\xi,u,\partial_\xi) r_{\species,\eps} (\tau,\xi,u)du.
    \end{equation}
  \end{subequations}
\end{proposition}

\begin{proof}
  In theorem \ref{th:magnetized-uniform-g} we have shown that
  $g_{\species,\eps} \in \Schwartz(\R^7)$, hence for every $l,m,n \in \N_0$ and
  $\alpha \in \N_0^4$,
  \begin{equation*}
    (1+t^2+x^2)^m (1+u^2)^n
    \big|\partial_t^l \partial_x^\alpha g_{\species,\eps}(t,x,u)\big|
      \leq \|g_{\species,\eps}\|_{|\alpha|+l+2m+2n}.
  \end{equation*}
  Since for $n > 3/2$, $(1+u^2)^{-n}$ is integrable, from equation~(\ref{eq:djnu_dt})
  we deduce that $\partial_t j_\eps = \mathbb{K}(\{g_{\species,\eps}\})$ is
  $C^\infty$ and the same inequality implies that all derivatives are rapidly
  decaying at infinity, that is, $\partial_t j_\eps \in [\Schwartz(\R^4)]^3$.

  Then the Fourier transform of $\partial_t j_\eps$ exists in the classical sense
  and, by Fubini's theorem, we have 
  \begin{equation*}
    \widehat{\partial_t j_\eps}(\omega,k) = \sum_\species q_\species
    (m_\species c)^3 \int v_\species(u) \hat{g}_{\species,\eps} (\omega,k,u) du.
  \end{equation*}
  Upon using the expression for $\hat{g}_{\species,\eps}$ in 
  theorem~\ref{th:magnetized-uniform-g} (i), we arrive at
  equation~(\ref{eq:chi-eps-tensor}). 
\end{proof}

\begin{remark}
  Convergence of the integral in~(\ref{eq:varsigma-eps-integral}) is ensured by
  the rapid decay in $u$ at infinity of the coefficients~(\ref{eq:test-functions})
  in the operators $Q_{\species,j}$.
\end{remark}

We recall that $Q_{\species,j}$ are first-order partial differential operators
and we can define the formal adjoint $Q^\prime_{\species,j}$ by
\begin{equation*}
  \int_{\R^7} \chi_1(\tau,\xi,u) Q_{\species,j} \chi_2(\tau,\xi,u) d\tau d\xi du
  = \int_{\R^7} \chi_2(\tau,\xi,u) Q^\prime_{\species,j}
  \chi_1(\tau,\xi,u) d\tau d\xi du, 
\end{equation*}
for every $\chi_1, \chi_2 \in \Schwartz(\R^7)$. The adjoint
$Q^\prime_{\species,j}$ is again a first-order partial differential operator and
it is continuous from $\Schwartz \to \Schwartz$. Therefore we can define the
distribution $\partial_t j \in [\Schwartz^\prime(\R^4)]^3$ by
\begin{equation*}
  \langle \widehat{\partial_tj}, \hat{\varphi} \rangle \coloneqq
  \sum_{j=1}^3 \sum_\species 
  \frac{\omega_{c,\species}^4 \omega_{p,\species}^2}{4\pi c^3}
  \langle r_\species , Q_{\species,j}' \big(
  \frac{\hat{\varphi} \cdot u }{\gamma} \hat{E}_j\big)\rangle,
\end{equation*}
for every vector test-function $\hat{\varphi} \in [\Schwartz(\R^4)]^3$.

\begin{proposition}
  \label{th:jnu-convergence}
  As $\eps \to 0^+$, $\partial_t j_\eps \to \partial_t j$
  in $[\Schwartz^\prime(\R^4)]^3$ and the limit satisfies 
  $\partial_t j = \mathbb{K}(\{g_\species\}) \in [C_b^\infty(\R^4)]^3$.
\end{proposition}

\begin{proof}
  Step 1: $\partial_t j_\eps \to \partial_t j$. For $\eps > 0$, one has
  \begin{equation*}
    \langle \widehat{\partial_tj_\eps}, \hat{\varphi} \rangle =
    \sum_\species \frac{\omega_{c,\species}^4 \omega_{p,\species}^2}{4\pi c^3}
    \sum_{j=1}^3 \langle r_{\species,\eps}, Q_{\species,j}' \big(
    \frac{\hat{\varphi} \cdot u}{\gamma} \hat{E}_j \big)\rangle,
  \end{equation*}
  where the factor $\omega_{c,\species}^4 c^{-3}$ comes for the chance of variable
  $d\omega dk = (\omega_{c,\species}^4/c^3) d\tau d\xi$.
  The result then follows from the convergence of $r_{\species,\eps}$ in
  $\Schwartz^\prime$ proven in proposition~\ref{th:r_0}.
  
  Step 2: $g_\species$ belongs to the domain of the operator $\mathbb{K}$.
  Wee observe that the flow (\ref{eq:magnetized-characteristics}) is
  polynomially bounded and thus satisfies the hypothesis of
  lemma~\ref{th:finite-moments}. Then, since $|v(u)| = c|u|/\gamma(u) \leq c$,
  for every $n,l \in \N_0$ and $\alpha \in \N^d_0$,
  \begin{equation}
    \label{eq:moment-estimate}
    (1+u^2)^n |v(u) \partial_t^l \partial_x^\alpha g_{\species}(t,x,u)|
    \leq c (1+u^2)^n |\partial_t^l \partial_x^\alpha g_{\species}(t,x,u)| \leq C,
  \end{equation}
  and upon choosing $n>3/2$ we deduce that
  $u \mapsto v(u) \partial_t^l \partial_x^\alpha g_\species(t,x,u)$ is bounded
  by an $L^1$-function. For $l = 0$ and $\alpha = 0$, this shows that
  $g_\species$ has finite first velocity moment. Iterated application of the
  dominated convergence theorem also shows that
  $\mathbb{K}(\{g_\species\}) \in C_b^\infty(\R^4)$.

  Step 3: $\partial_t j = \mathbb{K}(\{g_\species\})$.
  Let $\widetilde{\partial_t j} = \mathbb{K}(\{g_\species\})$. For every
  $\varphi \in [\Schwartz(\R^4)]^3$,
  \begin{equation*}
    \langle \partial_t j - \widetilde{\partial_t j}, \varphi\rangle =
    - \langle \partial_t j_\eps - \partial_t j, \varphi \rangle 
    + \langle \mathbb{K}(\{g_{\species,\eps} - g_\species\}), \varphi \rangle.
  \end{equation*}
  The first terms on the right-hand side converges to zero for $\eps \to 0^+$.
  The second term is estimated by
  \begin{equation*}
    \langle \mathbb{K}(\{g_{\species,\eps} - g_\species\}), \varphi \rangle =
    \sum_\species q_\species (m_\species c)^3 \int_{\R^4} \int_{\R^3}
    (g_{\species,\eps}-g_\species)(t,x,u) \varphi(t,x) \cdot v(u) du dt dx.
  \end{equation*}
  We know that $g_{\species,\eps} \in \Schwartz$ and upon using
  estimate~(\ref{eq:moment-estimate}) with $l=0$ and $\alpha=0$, we can
  bound the integrand by an $L^1$-function independent on $\eps$ so thus pass to
  the limit in the integrand. At last
  from proposition~\ref{th:magnetized-characteristics}, we have
  $g_{\species,\eps} - g_\species \to 0$.
\end{proof}

Proposition~\ref{th:jnu-convergence} is sufficient to prove the continuity of
$\varsigma : [\Schwartz(\R^4)]^3 \to [\Schwartz'(\R^4)]^3$,
cf. equation~(\ref{eq:sigma-2-2}). However, we wish to understand under which
conditions $\varsigma$ reduces to a Fourier multiplier and, when this is the
case, we wish to address the regularity of its symbol.

Since the limit $\partial_t j$ is independent of the choice of the damping
function, we choose $\nu_\species$ such that $\kappa_\species = 1$;
one can check that this satisfies condition~(\ref{eq:nu-conditions}).

We start from equation~(\ref{eq:chi-eps-tensor}) which can be rewritten as
\begin{multline}
  \label{eq:varsigma-eps}
  \hat{\varsigma}_{\eps,ij}(\omega,k) = \sum_\species
  \frac{\omega_{p,\species}^2}{4\pi} \Big[
    \int_{\R^3} \frac{u_i}{\gamma} \Phi_jr_{\species,\eps}(\tau,\xi,u)
    \mathscr{F}_\species(u) du \\
    +\int_{\R^3} \frac{u_i}{\gamma} \Psi_jr_{\species,\eps}(\tau,\xi,u)
    \mathscr{G}_\species(u) du\Big].
\end{multline}
Since $a_\eps$ depends on $\xi$ through $\xi_3$ only, we have
\begin{equation}
  \label{eq:Phi-r-eps}
  \Phi_j r_{\species,\eps} = 
  \frac{i e^{-i\pi a_\eps} \Phi_j P^\pm_{\eps}}{2\sin(\pi a_\eps)}, \qquad
  \Psi_j r_{\species,\eps} = 
  \frac{ie^{-i\pi a_\eps} \Psi_j P^\pm_{\eps}}{2\sin(\pi a_\eps)},
\end{equation}
with sign $+$ (resp., $-$) for $q_\species >0$ (resp., $q_\species<0$).

We shall now apply proposition~\ref{th:limit_tilde_r_eps}, which holds for
generic functions of the form (\ref{eq:r-tilde}) with $\tilde{P}_\eps$ subject
to condition~(\ref{eq:polynomial-growth}), to the case of interest, which is
given in equations~(\ref{eq:varsigma-eps}) and~(\ref{eq:Phi-r-eps}). We choose a
normalization time-scale common to all particle species, that is,
$\hat{\omega} \coloneqq \max_\species \omega_{c,\species}$. 

\begin{proposition}
  \label{th:main-4-2}
  Under the same hypotheses as in theorem~\ref{th:magnetized-uniform-g}, for
  $\omega \not=0$, the tensor $\hat{\varsigma}_{\eps}(\omega,k)$
  defined in~(\ref{eq:chi-eps-tensor}), has a pointwise limit
  $\hat{\varsigma}_0(\omega,k)$, which can be computed,
  cf. equation~(\ref{eq:varsigma-ph}), and such that:
  \begin{itemize}
  \item[(i)] The limit $\hat{\varsigma}_0$ is continuous on
    $(\R \setminus \{0\}) \times \R^3$
    and it is $C^\infty$ where $\omega \not=0$ and
    $\omega^2\not=(c k_3)^2 + n^2 \omega_{c,\species}^2$
    for all $n \in \Z$ and species index $\species$.
  \item[(ii)] For any $\omega_0 >0$, there is a constant $K_0(\omega_0)$ for which
    \begin{equation*}
      |\hat{\varsigma}_0(\omega,k)| \leq K_0(\omega_0)
      (1 + (\tfrac{\omega}{\hat{\omega}})^2 + (\tfrac{ck}{\hat{\omega}})^2)^M,
    \end{equation*}
    uniformly for $(\omega,k) \in \R^4$, $|\omega|\geq \omega_0$, with exponent
    $M$ independent of $\omega_0$.  
  \item[(iii)] For all $E \in [\Schwartz(\R^4)]^3$ such that
    $\omega \not = 0$ in $\supp\hat{E}$
    we have $\partial_t j = \mathcal{F}^{-1} \big(\hat{\varsigma}_0 \hat{E}\big)$.
  \end{itemize}  
\end{proposition}

\begin{proof}
  We start from equation~(\ref{eq:varsigma-eps}) which reads
  \begin{equation*}
    \hat{\varsigma}_{\eps,ij}(\omega,k) = \sum_\species
    \frac{\omega_{p,\species}^2}{4\pi} \Big[
      \Big\langle \Phi_jr_{\species,\eps}(\tau,\xi,\cdot),
      \frac{1}{\gamma} u_i \mathscr{F}_\species \Big\rangle
      + \Big\langle  \Psi_jr_{\species,\eps}(\tau,\xi,\cdot),
      \frac{1}{\gamma} u_i \mathscr{G}_\species \Big\rangle\Big],
  \end{equation*}
  where $\tau = \omega/\omega_{c,\species}$ and $\xi = ck/\omega_{c,\species}$,
  cf. equation~(\ref{eq:norm-omega-k}).
  
  As observed in equations~(\ref{eq:Phi-r-eps}), both $\Phi_j r_{\species,\eps}$
  and $\Psi_j r_{\species,\eps}$ are special cases of~(\ref{eq:r-tilde})
  corresponding to $\tilde{P}_\eps = (i/2) e^{-i\pi a_\eps} \Phi_j P^\pm_{\eps}$ and
  $\tilde{P}_\eps = (i/2)e^{-i\pi a_\eps} \Psi_j P^\pm_{\eps}$, respectively.
  Lemma~\ref{th:condition-59} shows that in both cases $\tilde{P}_\eps$ satisfies
  condition~(\ref{eq:polynomial-growth}). Proposition \ref{th:limit_tilde_r_eps}
  gives, for any $(\tau,\xi)$ with $\tau \not=0$, distributions 
  $r^\Phi_{\species,j}(\tau,\xi,\cdot), r^\Psi_{\species,j}(\tau,\xi,\cdot) \in
  \Schwartz^\prime(\R^3)$ such that 
  \begin{equation*} 
    \Phi_j r_{\species,\eps}(\tau,\xi,\cdot) \to
    r^\Phi_{\species,j}(\tau,\xi,\cdot),
    \qquad
    \Psi_j r_{\species,\eps}(\tau,\xi,\cdot) \to
    r^\Psi_{\species,j}(\tau,\xi,\cdot),
  \end{equation*} 
  in the topology of $\Schwartz^\prime(\R^3)$. In this case, let 
  \begin{equation}
    \label{eq:varsigma-ph}
    \hat{\varsigma}_{0,ij}(\omega,k) = 
    \sum_\species \frac{\omega_{p,\species}^2}{4\pi}\Big[
      \Big\langle r_{\species,j}^\Phi(\tau,\xi,\cdot),
      \frac{1}{\gamma} u_i \mathscr{F}_\species \Big\rangle 
      + \Big\langle r^\Psi_{\species,j}(\tau,\xi,\cdot),
      \frac{1}{\gamma} u_i \mathscr{G}_\species \Big\rangle
      \Big],
  \end{equation}
  and we have
  $\hat{\varsigma}_{\eps,ij}(\omega,k) \to \hat{\varsigma}_{0,ij}(\omega,k)$
  pointwise in Fourier space where $\omega \not=0$.

  (i) Proposition~\ref{th:limit_tilde_r_eps} implies that 
  $\hat{\varsigma}_{0,ij}$ is continuous in $(\R \setminus \{0\}) \times \R^3$
  and $C^\infty$ where $\tau \not= 0$ and $\tau^2 \not= \xi_3^2 + n^2$ for all
  integers $n$, and for all particle species.

  (ii) We observe that for any $\species$,
  $\hat{\omega} / \omega_{c,\species} \geq 1$, hence,
  \begin{equation*}
    1 + \tau^2 + \xi^2 \leq \frac{\hat{\omega}^2}{\omega_{c,\species}^2}
    \Big(1 + (\frac{\omega}{\hat{\omega}})^2 + (\frac{ck}{\hat{\omega}})^2
    \Big).
  \end{equation*}
  Then proposition~\ref{th:limit_tilde_r_eps} item (iii) implies
  that there are constants $K_0, M > 0$ independent of $\eps$ such that
  \begin{equation*}
    \big|\hat{\varsigma}_\eps(\omega,k)\big| \leq K_0(\omega_0)
    \big(1 + (\tfrac{\omega}{\hat{\omega}})^2
    + (\tfrac{ck}{\hat{\omega}})^2\big)^M,
  \end{equation*}
  uniformly in $\eps \in [0,\eps_0]$ and $|\omega| \geq \omega_0$.

  (iii) We use the estimate proven in (ii), which implies
  \begin{equation*}
    \big|\hat{\varsigma}_\eps(\omega,k) - \hat{\varsigma}_0(\omega,k)\big|
    \leq 2K_0(\omega_0) \big(1 + (\tfrac{\omega}{\hat{\omega}})^2
    + (\tfrac{ck}{\hat{\omega}})^2\big)^M,
  \end{equation*}
  uniformly in $\eps \in [0,\eps_0]$ and for $|\omega| \geq \omega_0$. Since by
  hypothesis $\omega \not= 0$ and thus $\tau \not= 0$ in $\supp\hat{E}$, we can
  choose $\omega_0$ small enough that, for any $\hat{\varphi} \in[\Schwartz(\R^4)]^3$,
  \begin{multline*}
    \big|\big[
      \big(\hat{\varsigma}_\eps(\omega,\tau) - \hat{\varsigma}_0(\omega,k)\big)
      \hat{E}(\omega,k)\big] \cdot \hat{\varphi}(\omega,k)\big| \\
    \leq 2K_0(\omega_0) \big(1 + (\tfrac{\omega}{\hat{\omega}})^2
    + (\tfrac{ck}{\hat{\omega}})^2\big)^M
    \big|\hat{E}(\omega,k)\big| \big|\hat{\varphi}(\omega,k)\big|,
  \end{multline*}
  uniformly in $\eps \in [0,\eps_0]$ and $(\omega,k) \in \R^4$,
  $|\omega| \geq \omega_0$ .
  Since $E,\varphi \in [\Schwartz(\R^4)]^3$, the right-hand side of the above
  inequality belongs to $L^1(\R^4)$ and we have already established that
  $\hat{\varsigma}_\eps(\omega,\tau) - \hat{\varsigma}_0(\omega,k) \to 0$
  pointwise. Therefore the hypothesis of the dominated convergence theorem are
  satisfied and we can conclude
  \begin{equation*}
    \int_{\R^4} \big[\big(\hat{\varsigma}_\eps(\omega,\tau) -
    \hat{\varsigma}_0(\omega,k)\big) \hat{E}(\omega,k)\big] \cdot
    \varphi(\omega,k) d\omega dk \to 0,
  \end{equation*}
  for all $\hat{\varphi} \in[\Schwartz(\R^4)]^3$
  which means that $\hat{\varsigma}_\eps \hat{E} \to \hat{\varsigma}_0 \hat{E}$
  in $\Schwartz^\prime(\R^4)$. At last proposition~\ref{th:jnu-convergence}
  establishes that the $\Schwartz^\prime$-limit of
  $\hat{\varsigma}_\eps \hat{E}$ is exactly $\widehat{\partial_t j}$. 
\end{proof}

\subsection[Proof of the results of section~\ref{sec:main-2}]{%
  \for{toc}{Proof of the results of section~\ref{sec:main-2}}%
  \except{toc}{Proof of the results on the relativistic, three-dimensional case
    with uniform magnetic field (section~\ref{sec:main-2})}%
}

We can now collect the partial results obtained in this section and give a proof
of the main theorems stated in section~\ref{sec:main-2}.

\begin{proof}[Proof of theorem~\ref{th:main-2-1}.]
  (i) For each $\species$ and $\eps$, existence of a solution
  $g_{\species,\eps} \in \Schwartz(\R^7)$ of the Vlasov equation with damping
  and its uniqueness is established in theorem~\ref{th:magnetized-uniform-g}.
  We have shown that $\widehat{\partial_t j_\eps} \in [\Schwartz(\R^4)]^3$ in
  proposition~\ref{th:jnu-convergence}.

  (ii) Proposition~\ref{th:magnetized-characteristics} proves that
  $g_{\species,\eps}$ converges pointwise to $g_{\species} \in C_b^{\infty}$ and
  the limit is independent of the damping function $\nu$. Explicit expressions
  for $g_{\species,\eps}$ and $g_\species$ are given. 
  Using the inequality, cf. the proof
  of proposition~\ref{th:magnetized-characteristics},
  \begin{equation*}
    (1+s^2)^m \big|\mathfrak{s}\big(s, X(s;t,x,p), P(s;t,x,p)\big)\big| \leq 
    \|\mathfrak{s}\|_{2m},
  \end{equation*}
  yields the bound
  \begin{equation*}
    \big|g_{\species,\eps} (t,x,u)\big| \leq C,
  \end{equation*}
  uniformly in $\eps$ and thus for any $\varphi \in \Schwartz(\R^7)$,
  \begin{equation*}
    \big|\big(g_{\species,\eps}(t,x,u) - g_\species(t,x,u)\big)\varphi(t,x,u) \big| \leq
    2C \big|\varphi(t,x,u)\big|,
  \end{equation*}
  and $|\varphi|$ is in $L^1$. Hence the dominated convergence theorem yields
  \begin{equation*}
    \langle g_{\species,\eps} - g_\species, \varphi \rangle =
    \int \big(g_{\species,\eps}(t,x,u) - g_\species(t,x,u)\big)\varphi(t,x,u) dtdxdu
    \to 0,
  \end{equation*}
  for all $\varphi \in \Schwartz(\R^7)$. This is equivalent to
  $g_{\species,\eps} \to g_\species$ in $\Schwartz^\prime$. Existence of the
  $\Schwartz^\prime$-limit of $\partial_t j_\eps$ is proven in
  proposition~\ref{th:jnu-convergence}, in which we also show that the limit
  $\partial_t j$ is determined by $g_\species$ only, hence it is independent of $\nu$.

  (iii) The fact that the limit $g_\species$ is a classical solution of the
  linearized Vlasov equation in $C_b^\infty$ is proven
  in proposition~\ref{th:magnetized-characteristics}. At last, the identity
  $\partial_t j = \mathbb{K}(\{g_{\species}\})$ is shown in
  proposition~\ref{th:jnu-convergence}.
\end{proof}

\begin{proof}[Proof of theorem~\ref{th:main-2-2}.]
  After Fourier transform, the action of $\varsigma(E)$ on a test function is
  given in proposition~\ref{th:jnu-convergence}. Since $r_\species$ is a
  tempered distribution and $Q^\prime_{\species,j}$ is continuous from
  $\Schwartz \to \Schwartz$, we have
  \begin{equation*}
    \big|\langle \widehat{\partial_t j}, \hat{\varphi} \rangle\big| \leq
    C \|u\cdot\hat{\varphi} \hat{E}/\gamma \|_\ell,
  \end{equation*}
  and Leibniz rule for the derivative yields
  \begin{equation*}
    \big|\langle \widehat{\partial_t j}, \hat{\varphi} \rangle\big| \leq
    \tilde{C} \|\hat{\varphi}\|_{\ell} \|\hat{E}\|_\ell.
  \end{equation*}
  If $\hat{E} \in [\Schwartz(\R^4)]^3$, then $(1-\chi(\omega))\hat{E}(\omega,k)$
  satisfies the hypothesis of proposition~\ref{th:main-4-2} which gives
  $\varsigma(E) = \mathcal{F}^{-1}\big((1-\chi)\hat{\varsigma}_0 \hat{E}\big)$.
\end{proof}

\section{Stationary and non-stationary phase results for some integrals}
\label{sec:stationary-phase}

This section is devoted to the proof of four lemmas, in which we study the phase
$\vartheta_\eps = \tan (\pi a_0) \cosh(\pi\eps)$ with the aim of evaluating
integrals in $u$ via stationary phase results. It replaces the study of
$\frac{1}{\sin \pi a_\epsilon}$ by 
the study of $-\frac{i}{\cos \pi a_0}\int_0^{+\infty}e^{i\lambda \theta_{\epsilon}}d\lambda$
and exchanging integration in $\lambda$ and $u$ in integrals defining the
current, cf. equations~(\ref{eq:partition-of-unity}) and~(\ref{eq:Psi-delta}) in
section~\ref{sec:uniform}. 
In addition, in this section, as the distribution
$\frac{1}{\sin \pi a_\epsilon}$ acts on the functions $\tilde{P}_\eps$, we give
results on the functions $\tilde{P}_\eps$ relevant to our applications (lemma
\ref{th:condition-59} on functions satisfying (\ref{eq:polynomial-growth})
below). In this section, we let $\kappa_\species = 1$, cf. comments before
proposition~\ref{th:limit_tilde_r_eps}.

We note basic facts about the stationary phase points, that are the same as the
critical points of $a_0$, since $\tan(\pi a_0)$ behaves as $\pi (a_0-n)$. Because
of the symmetries of the function $a_0$, it is sufficient to consider the case
$\tau>0$ and $\xi_3\geq 0$.  

\begin{lemma}
  \label{th:stationary-phase}
  For $\eps>0$, let $\vartheta_\eps$ be defined in
  equation~(\ref{eq:complex-phase}) and let $\tau > 0$, $\xi \in \R^3$
  with $\xi_3 \geq 0$.   
  \begin{itemize}

  \item[(i)] For $\tau^2 - \xi_3^2 \leq 0$,
    $\vartheta_\eps (\tau,\xi,\cdot)$ has no critical points and 
    \begin{equation*}
      |\nabla_u \vartheta_\eps(\tau,\xi,u)| > (\pi\tau/2) (1+u^2)^{-1},
      \quad
      \text{for all }
      u \in \R^3.
    \end{equation*}
    
  \item[(ii)] For $\tau^2 - \xi_3^2 > 0$,
    $\vartheta_\eps(\tau,\xi,\cdot)$ has an isolated non-degenerate
    critical point at  
    \begin{equation*}
      u = u_c(\tau,\xi) = \big(0,0,\xi_3 / (\tau^2 - \xi^2_3)^{1/2}\big),
    \end{equation*}
    and at the critical point $u = u_c$ one has
    \begin{align*}
      \big|\det\vartheta_\eps'' (\tau,\xi,u_c)\big|
      &\geq (\pi\tau)^3 (1+u_c^2)^{-5/2}, \\
      |\langle \vartheta_\eps'' (\tau,\xi,u_c)^{-1}
      \nabla_u, \nabla_u\rangle \Upsilon| &\leq
      \frac{3}{\pi\tau} (1+u_c^2)^{3/2} 
      \max_{|\beta|=2}|\partial_u^\beta \Upsilon|,
    \end{align*}
    for all $\Upsilon(\tau,\xi,u)$ of class $C^2$, where
    $\vartheta_\eps''$ denotes the Hessian matrix of
    $\vartheta_\eps$ with respect to $u$, and $\langle\cdot, \cdot\rangle$
    is the Euclidean product in $\R^3$.
  \end{itemize}
\end{lemma}

\begin{proof}
  A direct computation gives
  \begin{equation*}
    \nabla_u \vartheta_\eps = \pi (1+\tan^2(\pi a_0)\big) \cosh(\pi \eps)
    \nabla_u a_0, 
  \end{equation*}
  and thus $\nabla_u \vartheta_\eps =0$ if and only if $\nabla_u a_0=0$. On the
  other hand the critical points of $a_0$ for a given $(\tau,\xi)$ are solution to 
  \begin{equation*}
    u_1 = u_2 = 0, \quad \frac{\tau u_3}{\sqrt{1+u_3^2}} = \xi_3.
  \end{equation*}
  For $\tau\not=0$, this equation is satisfied only if $\xi_3^2 / \tau^2 \leq 1$.
  In that case the solution is given by $u_c$ as claimed. The Hessian matrix
  \begin{multline*}
    \vartheta_\eps'' = 2 \pi^2 \tan(\pi a_0)
    \big(1+\tan^2(\pi a_0)\big) \cosh(\pi\eps) \nabla a_0 \otimes \nabla a_0  \\
    + \pi (1+\tan^2(\pi a_0)\big) \cosh(\pi \eps) a_0'',
  \end{multline*}
  at the critical point reduces to
  \begin{equation*}
    \vartheta_\eps'' (\tau,\xi,u_c) =
    \pi \big(1+\tan^2\big(\pi a_0(\tau,\xi,u_c)\big)\big) \cosh(\pi \eps)
    a_0'' (\tau,\xi,u_c),
  \end{equation*}
  with
  \begin{equation*}
    a_0''(\tau,\xi,u_c) = a_0(\tau,\xi,u_c)
    \begin{pmatrix}
      1   &    0     &    0    \\
      0   &    1     &    0    \\
      0   &    0     &  1-\xi_3^2/\tau^2
    \end{pmatrix}
  \end{equation*}
  being the Hessian matrix of $a_0(\tau,\xi,\cdot)$ evaluated at $u = u_c$ and
  we have accounted for the identity $a_0(\tau,\xi,u_c) = (\tau^2 - \xi_3^2)^{1/2}$.

  Proof of the inequality in (i). We write
  \begin{equation*}
    |\nabla_u a_0(\tau,\xi,u)|^2 = \frac{\tau^2}{1+u^2}
    \Big[ u_1^2 + u_2^2 + \Big(u_3 - \frac{\xi_3}{\tau} \sqrt{1+u^2} \Big)^2\Big],
  \end{equation*}
  and observe that for $\tau > 0$, $\xi_3 \geq 0$, $\tau^2 - \xi_3^2 \leq 0$ and all
  $u \in \R^3$, 
  \begin{equation*}
    \Big|u_3 - \frac{\xi_3}{\tau} \sqrt{1+u^2} \Big| \geq
    \sqrt{1+u_3^2} - |u_3|,
  \end{equation*}
  therefore $(1+u^2) |\nabla_u a_0(\tau,\xi,u)|^2 \geq \tau^2 \psi(u_3)$, with
  $\psi(u_3) = \big(\sqrt{1+u_3^2} - |u_3|\big)^2$. The function
  $u_3 \mapsto (1+u_3^2) \psi(u_3)$ is even and for $u_3 \geq 0$, it
  decreases monotonically starting from the value $\psi(0) = 1$, and
  approaching $1/4$ as $u_3 \to +\infty$. Hence $(1+u_3^2)\psi(u_3) > 1/4$ for all
  $u_3 \in \R$. This yields the claimed inequality. 
  
  Estimates in (ii). We compute
  \begin{equation*}
    \det a_0'' (\tau,\xi,u_c) = (\tau^2 - \xi_3^2)^{5/2} /\tau^2,
  \end{equation*}
  and, for every function $\Upsilon \in C^2$,
  \begin{align*}
    \big|\langle \big(\vartheta_\eps''(\tau,\xi,u_c)\big)^{-1}
    \nabla_u, \nabla_u\rangle \Upsilon \big|
    &= \frac{|\partial_{u_1}^2 \Upsilon + \partial_{u_2}^2 \Upsilon +
      \frac{\tau^2}{\tau^2 - \xi_3^2} \partial^2_{u_3} \Upsilon|}
          {\pi (1+\tan^2(\pi a_0))\cosh(\pi \eps)) (\tau^2 - \xi_3^2)^{1/2}}  \\
          &\leq \frac{3\tau^2}{\pi (\tau^2 - \xi_3^2)^{3/2}} \max_{|\alpha|=2}
    |\partial_u^\alpha \Upsilon|,
  \end{align*}
  since $\tau^2/(\tau^2-\xi_3^2) \geq 1$. At last it is enough noting that
  $\sqrt{\tau^2-\xi_3^2} = \tau/\sqrt{1 + u_c^2}$.
\end{proof}

\begin{remark}[Non-relativistic limits]
  In the non-relativistic and weakly relativistic limits, one has
  \begin{align*}
    a_0(\tau,\xi,u) &\approx a_{\mathrm{nr},0}(\tau,\xi,u)
    \coloneqq \tau - \xi_3u_3, \\
    \intertext{and} 
    a_0(\tau,\xi,u) &\approx a_{\mathrm{wr},0}(\tau,\xi,u)
    \coloneqq \tau (1 + u^2/2) - \xi_3 u_3,
  \end{align*}
  respectively. In the first case (the non-relativistic limit), there is no
  stationary phase point if $\xi_3 \not= 0$, but the phase reduces to a constant
  (in $u$) when $\xi_3=0$; particularly, when $\xi_3 = 0$, all values of
  $u \in \R^3$ are either in resonance if $\tau \in \Z$ or not in resonance if
  $\tau \not\in \Z$, that is, the set $\mathcal{R}(\tau,\xi)|_{\xi_3=0}$ is not
  a closed surface. On the other hand, in the weakly non-relativistic limit there
  is a non-degenerate stationary phase point $u_c = (0,0,\xi_3/\tau)$ for
  $\tau \not=0$. Then, $u_c \in \supp \chi_\delta(\tau,\xi,\cdot)$, where
  $\chi_\delta$ is the cut-off function introduced in
  equation~(\ref{eq:cut-off}) of section~\ref{sec:uniform}, only
  if there is an integer $n$ such that
  $\tau - \xi_3^2/(2\tau) \in [n-\delta/3,n+\delta/3]$.
  The Hessian matrix of the phase at the critical point in the weakly
  non-relativistic case amounts to
  \begin{equation*}
    \vartheta_{\mathrm{wr},\eps}''(\tau,\xi,u_c) =
    \pi \tau \big(1 + \tan^2(\pi a_{\mathrm{wr},0})\big) \cosh(\pi \eps) I
  \end{equation*}
  where $I$ is the identity matrix. In a sense, the relativistic Lorentz factor
  removes the degeneracy of the case $\xi_3 = 0$. The stationary phase argument
  developed here applies to both the relativistic and weakly relativistic cases.
\end{remark}

\begin{lemma}
  \label{th:phase-estimates}
  For any $\alpha \in \N_0^4$, $\alpha \not=0$, and any integer $n \geq 0$,
  there are polynomials $\pi_\alpha$, $\pi_{\alpha,j}$, $j = 1, \ldots,|\alpha|$,
  and $\pi_{\alpha}^{(n)}$ of one real variable such that 
  \begin{align}
    \label{eq:phase-estimate-1}
    \partial_{\tau,\xi}^\alpha \vartheta_\eps &= \pi_\alpha\big(\tan(\pi a_0)\big)
    \cosh(\pi\eps) (\nabla_{\tau,\xi}a_0)^\alpha, \\
    \label{eq:phase-estimate-2}
    e^{-i\lambda \vartheta_\eps} \partial_{\tau,\xi}^\alpha e^{i\lambda \vartheta_\eps}
    &= \sum_{j=1}^{|\alpha|} \big(i\lambda \cosh(\pi\eps)\big)^j
    \pi_{\alpha,j}\big(\tan(\pi a_0)\big) (\nabla_{\tau,\xi}  a_0)^\alpha,
    \quad \lambda \in \R,\\
    \label{eq:phase-estimate-3}
    \big|\partial_{\tau,\xi}^\alpha \big[\vartheta_\eps^n
      e^{i\lambda \vartheta_\eps} \big]\big| &\leq \pi_{\alpha}^{(n)}(\lambda)
    (1 + u^2)^{\frac{|\alpha|}{2}}, \quad
    u \in \supp\chi_\delta(\tau,\xi,\cdot), \quad \lambda \geq 0,
  \end{align}
  where $\chi_\delta$ is the cut-off function defined in equation~(\ref{eq:cut-off}).
\end{lemma}

\begin{proof}
  The key observation is that $a_0$ is a linear function of $(\tau,\xi)$, hence
  $\partial_{\tau,\xi}^\beta a_0 = 0$ for $|\beta| \geq 2$.
  Identities~(\ref{eq:phase-estimate-1}) and (\ref{eq:phase-estimate-2}) are
  true for $|\alpha| = 1$ and can be extended to all $\alpha$ with
  $|\alpha| \geq 1$ by induction.  
  As for inequality~(\ref{eq:phase-estimate-3}), Leibniz formula applied to the
  function $G_n(\lambda,z) = z^n e^{i\lambda z}$ with $\lambda\in\R$ and $z \in \C$,
  gives for any positive integer $\ell$, 
  \begin{equation*}
    \partial_z^\ell G_n(\lambda,z) = e^{i\lambda z} \sum_{m=0}^{\ell}
    \binom{\ell}{m} (i \lambda)^{\ell-m} \partial_z^{m} (z^n).
  \end{equation*}
  Because of lemma~\ref{th:on-supp-chi-delta},
  for $u \in \supp\chi_{\delta}(\tau,\xi,\cdot)$ and $\eps \leq \eps_0$,
  \begin{equation*}
    |\vartheta_\eps| \leq |\tan(\pi a_0) \cosh(\pi \eps)| + |\sinh(\pi\eps)|
    \leq 
    \cosh(\pi\eps_0) / \cos(\pi\delta/3) + \sinh(\pi\eps_0),
  \end{equation*}
  therefore, for $\lambda \geq 0$,
  \begin{equation*}
    |\partial_z^\ell G_n(\lambda,\vartheta_\eps)| \leq
    \tilde{\pi}^{(n)}_\ell (\lambda),
  \end{equation*}
  where $\tilde{\pi}^{(n)}_\ell(\lambda)$ is a polynomial of degree $\ell$ in
  $\lambda$ and with positive real coefficients. Since
  $\big|\nabla_{\tau,\xi} a_0(\tau,\xi,u)\big| \leq (1+u^2)^{1/2}$,
  lemma~\ref{th:on-supp-chi-delta} and identity (\ref{eq:phase-estimate-1}) imply
  \begin{equation*}
    |\partial_{\tau,\xi}^\alpha \vartheta_\eps(\tau,\xi,u)| \leq c_\alpha
    (1+u^2)^{|\alpha|/2},\;
    \text{ for $\eps \in [0,\eps_0]$, $|\alpha|\geq 1$
      and $u \in \supp\chi_\delta(\tau,\xi,\cdot)$.}
  \end{equation*}
  The Fa\`a di Bruno's formula for
  $\partial_{\tau,\xi}^\alpha[\vartheta_\eps^n e^{i\lambda \vartheta_\eps}] =
  \partial_{\tau,\xi}^\alpha[G_n(\lambda, \vartheta_\eps)]$ amounts to the sum of
  terms of the form
  \begin{equation*}
    \partial_z^\ell G_n(\lambda,\vartheta_\eps) \partial_{\tau,\xi}^{\alpha_1}
    \vartheta_\eps \cdots \partial_{\tau,\xi}^{\alpha_\ell} \vartheta_\eps,
  \end{equation*}
  where the multi-indices $\alpha_1,\ldots, \alpha_\ell$ in the $\ell$ factors are
  such that $\partial_{\tau,\xi}^\alpha = \partial_{\tau,\xi}^{\alpha_1} \cdots
  \partial_{\tau,\xi}^{\alpha_\ell}$, and in articular,
  $|\alpha_1| + \cdots + |\alpha_\ell| = |\alpha|$. It follows that
  \begin{equation*}
    \big|\partial_{\tau,\xi}^\alpha[\vartheta_\eps^n e^{i\lambda \vartheta_\eps}]\big|
    \leq \pi_\alpha^{(n)}(\lambda)
    (1 + u^2)^{|\alpha|/2},
  \end{equation*}
  where $\pi_\alpha^{(n)}$ is a polynomials of degree $|\alpha|$ in $\lambda$,
  with positive real coefficients.
\end{proof}

Next, we establish estimates for the integrands in equations~(\ref{eq:Psi-delta}). 
(Let us recall that $\|\phi\|_j$ denotes the Schwartz semi-norms of
$\phi \in \Schwartz$ as defined in appendix~\ref{sec:basic-def}.) 

\begin{lemma}
  \label{th:estimates-of-integrands}
  Let $\eps_0 >0$ and $\delta \in (0,1)$ be fixed and let $a_\eps$ and $\chi_\delta$ 
  be defined in equation~(\ref{eq:a-and-bs}) and (\ref{eq:cut-off}). 
  For every $\phi \in \Schwartz(\R^3)$, $\tilde{P}_\eps$ satisfying
  condition~(\ref{eq:polynomial-growth}), $\alpha \in \N_0^4$, $\rho \in \N_0^3$,
  and integer $m\geq0$, there are constants
  $C^c_{\alpha,\rho}$ and $C^s_{\alpha,\rho}$, dependent on $\eps_0$,and $\delta$ but
  independent of $m$, such that 
  \begin{align*}
    (1+u^2)^m \Big|\partial_{\tau,\xi}^\alpha\partial_u^\rho\Big[
      \frac{\chi_\delta \tilde{P}_\eps \phi}{\cos(\pi a_0)}
       \Big]\Big| &\leq C^c_{\alpha,\rho}
    (1+ \tau^2 + \xi^2)^{\ell_{\alpha,\rho} + \frac{|\rho|}{2}}
    \|\phi\|_{2(m + \tilde{\ell}_{\alpha,\rho}) + |\alpha| + |\rho|} \\
    (1+u^2)^m \Big|\partial_{\tau,\xi}^\alpha\partial_u^\rho \Big[
      \frac{(1-\chi_{\delta}) \tilde{P}_\eps \phi}{\sin(\pi a_\eps)}
      \Big]\Big| &\leq C^s_{\alpha,\rho}
    (1+ \tau^2 + \xi^2)^{\ell_{\alpha,\rho} + \frac{|\rho|}{2}}
    \|\phi\|_{2(m +\tilde{\ell}_{\alpha,\rho}) + |\alpha| + |\rho|},
  \end{align*}
  uniformly in $(\tau,\xi,u)$ and $\eps \in [0,\eps_0]$, with 
  $\ell_{\alpha,\rho} = \min \{\ell \in \N_0 :\; \ell \geq \hat{m}_\mu,\,
  \forall \mu:\; |\mu| \leq |\alpha| + |\rho|\}$ and
  $\tilde{\ell}_{\alpha,\rho} = \min \{\ell \in \N_0 :\; \ell \geq \hat{n}_\mu,\,
  \forall \mu:\; |\mu| \leq |\alpha| + |\rho|\}$.
\end{lemma} 

\begin{proof} 
  For $\alpha=0$ and $\rho=0$, both inequalities follows immediately from
  (\ref{eq:polynomial-growth}) and lemma~\ref{th:on-supp-chi-delta}. 
  For the first inequality with $|\alpha|+|\rho|\geq 1$, the Leibniz rule yields
  \begin{equation*}
    \Big|\partial_{\tau,\xi}^\alpha \partial_u^\rho \Big[
      \frac{\tilde{P}_\eps\phi}{\cos(\pi a_0)}
      \chi_\delta\Big]\Big| \leq \hat{C}_{\alpha,\rho}
    \max_{\beta,\rho'} \Big|\partial_{\tau,\xi}^{\beta}\partial_u^{\rho'}
    (\frac{\chi_\delta}{\cos(\pi a_0)})\Big|
    \max_{\beta,\rho'} \big|\partial_{\tau,\xi}^\beta \partial_u^{\rho'} \tilde{P}^\eps\big|
    \max_{\rho'} \big|\partial_u^{\rho'}\phi\big|,
  \end{equation*}
  where the $\max$ are on multi-indices $\beta \in \N_0^4$ and $\rho' \in \N_0^3$
  such that $\beta_i \leq \alpha_i$ for all $i =0,1,2,3$ and $\rho'_i \leq \rho_i$
  for all $i=1,2,3$, and with constant depending only of $\alpha,\rho$.

  The function $a_0$ is linear in $(\tau,\xi)$,
  hence $\partial_{\tau,\xi}^\beta a_0 = 0$ for all multi-indices $\beta$ with
  $|\beta|\geq 2$, while $|\partial_{\tau, \xi} a_0| \leq (1+u^2)^{1/2}$, and 
  \begin{equation*}
    |\partial_u^\rho a_0| \leq (1 + \tau^2 + \xi^2)^{1/2},\;
    |\partial_{\tau, \xi} \partial_u^\rho a_0| \leq \tilde{c}_\rho,
    \text{ for all } \rho \in \N_0^3:\, |\rho|\geq 1,
  \end{equation*}
  where $\tilde{c}_\rho = \max\{1, \sup_u |\partial_u^\rho \sqrt{1+u^2}|\}$.
  
  Because of lemma~\ref{th:partition-of-unity}, near any point
  $(\tau,\xi,u) \in \supp\chi_\delta$, there is a unique integer $n$ such that
  $\chi_\delta = \chi\big((a_0 - n)/\delta\big)$. One can estimate the derivatives
  of $\chi\big((a_0 - n)/\delta\big) / \cos(\pi a_0)$ by means of a multivariate
  version of the standard Fa\`a di Bruno's formula \cite{Gzyl1986}. However, we
  observe that each factor $\partial_{\tau,\xi}^\beta \partial_u^{\rho'} a_0$
  grows at most like either $(1+\tau^2+\xi^2)^{1/2}$ or $(1+u^2)^{1/2}$
  regardless of the order of the derivative. Therefore it is sufficient to
  estimate the term in the Fa\`a di Bruno's formula with the highest number of
  factors, that is,
  \begin{equation*}
    \partial_{\tau,\xi}^{\beta}\partial_u^{\rho'}
    \Big[\frac{\chi_\delta}{\cos(\pi a_0)} \Big]
    = (\nabla_{\tau,\xi,u} a_0)^\mu \frac{d^N}{dz^N}
    \Big[\frac{\chi\big((z-n)/\delta\big)}{\cos(\pi z)}\Big]_{z=a_0} + \cdots
  \end{equation*}
  where $\mu = (\beta,\rho') \in \N_0^7$, and $N = |\mu| = |\beta| + |\rho'|$.
  All the derivative with respect to $z$ are bounded because of the support of
  $\chi$ and its derivatives as in lemma~\ref{th:on-supp-chi-delta}.
  Hence, for all $\beta \leq \alpha$ and $\rho' \leq \rho$,
  \begin{equation*}
    \Big| \partial_{\tau,\xi}^{\beta} \partial_u^{\rho'}
    \Big[\frac{\chi_\delta}{\cos(\pi a_0)}\Big] \Big| \leq
    c_{\alpha,\rho}(\delta) (1 + u^2)^{|\alpha|/2} (1 + \tau^2 + \xi^2)^{|\rho|/2}.
  \end{equation*}
  Assumption (\ref{eq:polynomial-growth}) then gives
  \begin{multline*}
    \Big|\partial_{\tau,\xi}^\alpha \partial_u^{\rho} \Big[
      \frac{\tilde{P}_\eps\phi}{\cos(\pi a_0)}
      \chi_\delta\Big]\Big| \leq 
    C^c_{\alpha,\rho} (1+\tau^2 + \xi^2)^{\ell_{\alpha,\rho} + \frac{|\rho|}{2}}
    (1 + u^2)^{\tilde{\ell}_{\alpha,\rho}+\frac{|\alpha|}{2}}
    \max_{|\rho'| \leq |\rho|} \big|\partial_u^{\rho'} \phi\big|,
  \end{multline*}
  where $\ell_{\alpha,\rho}$ and $\tilde{\ell}_{\alpha,\rho}$ are smallest
  integers larger than all the exponents $\hat{m}_\mu$ and $\hat{n}_\mu$
  for $|\mu| \leq |\alpha| + \rho|$, respectively. Upon multiplying by
  $(1+u^2)^m$, one obtains the claimed inequality. The second inequality follows
  analogously.  
\end{proof}

The functions defined in~(\ref{eq:Psi-delta}) have the following properties.

\begin{lemma}
  \label{th:application-of-stationary-phase}
  Let $\eps_0 > 0$ be fixed, $a_\eps$ and $\chi_\delta$
  be defined in equation~(\ref{eq:a-and-bs}) and (\ref{eq:cut-off}), 
  $\ell_{\alpha,\rho}$ and $\tilde{\ell}_{\alpha,\rho}$ be the integers given in
  lemma~\ref{th:estimates-of-integrands}, 
  and $m_j = \min \{\ell \in\N_0:\ell \geq \ell_{0,\rho}, |\rho|\leq j\}$,
  $\tilde{m}_j = \min \{\ell \in\N_0: \ell \geq \tilde{\ell}_{0,\rho},\;
  |\rho|\leq j\}$. If $\tilde{P}_\eps$, $\eps\in[0,\eps_0]$, satisfies
  condition~(\ref{eq:polynomial-growth}), 
  
  \begin{itemize}
  \item[(i)] for any $\delta \in (0,1)$, $\mathcal{I}_{\delta,\eps}^c(\psi)$,
    $\mathcal{I}_{\delta,\eps}^s(\psi)$, $\mathcal{I}_{\delta,0}^c(\psi)$, and
    $\mathcal{I}_{\delta,0}^s(\psi)$ defined in~(\ref{eq:Psi-delta}) for
    $\psi \in C_0^\infty$ are defined and
    of class $C^\infty$ for any $\psi \in \Schwartz(\R^3)$;
    
  \item[(ii)] for any $\delta \in (0,1)$ and $\phi \in \Schwartz(\R^3)$,
    $\mathcal{I}_{\delta,\eps}^c(\phi) \to \mathcal{I}_{\delta,0}^c(\phi)$ and
    $\mathcal{I}_{\delta,\eps}^s(\phi) \to \mathcal{I}_{\delta,0}^s(\phi)$,
    pointwise as $\eps \to 0^+$;
    
  \item[(iii)] for any $\delta \in (0,1)$, there exists a function
    $\mathcal{B}_{\eps_0,\delta} : \R_+ \times (\R \setminus\{0\}) \to \R_+$
    such that $\mathcal{B}_{\eps_0,\delta}(\cdot,\tau) \in L^1(\R_+)$,
    $\mathcal{B}_{\eps_0,\delta}(\lambda,\tau_1) \leq
    \mathcal{B}_{\eps_0,\delta}(\lambda,\tau_2)$  for $|\tau_1| \geq |\tau_2|$, and
    \begin{equation*} 
      |\mathcal{I}_{\delta,\eps}^c(\phi)(\tau,\xi,\lambda)| \leq
      \mathcal{B}_{\eps_0,\delta}(\lambda,\tau) 
      (1+\tau^2+\xi^2)^{m_4 + \frac{5}{2}} \|\phi\|_{2\tilde{m}_4 + 10},
    \end{equation*}
    for all $\phi \in \Schwartz(\R^3)$,
    $(\tau,\xi,\lambda) \in (\R\setminus\{0\})\times\R^3\times\R_+$, and
    $\eps \in [0,\eps_0]$;
    
  \item[(iv)] for $\alpha \in \N_0^4$, $|\alpha| \geq 1$,
    $\phi \in \Schwartz(\R^3)$, and for any connected, compact set $K \subset \R^4$
    with non-empty interior, such that $\tau \not = 0$ and
    $|\tau^2 - \xi_3^2 - n^2| \geq  \delta_K >0$ for all $n \in \N_0$ and
    $(\tau,\xi) \in K$, 
    there is $\delta_0 \in (0,1)$ depending on $K$ such that
    \begin{equation*}
      |\partial_{\tau,\xi}^\alpha \mathcal{I}_{\delta,\eps}^c(\phi)
      (\tau,\xi,\lambda)| \leq \mathcal{B}_{\eps_0,\delta}^{(\alpha)}(\lambda,\tau),
      \quad (\tau,\xi) \in K, \quad \delta \in (0, \delta_0),
    \end{equation*}
    where 
    $\mathcal{B}_{\eps_0,\delta}^{(\alpha)} : \R_+ \times (\R \setminus\{0\}) \to \R_+$
    satisfies
    $\mathcal{B}_{\eps_0,\delta}^{(\alpha)}(\cdot,\tau) \in L^1(\R_+)$, and
    decreases in $|\tau|$, that is,
    $\mathcal{B}_{\eps_0,\delta}^{(\alpha)} (\lambda,\tau_1) \leq
    \mathcal{B}_{\eps_0,\delta}^{(\alpha)}(\lambda,\tau_2)$  for $|\tau_1| \geq |\tau_2|$; 
 
  \item[(v)] for any multi-index $\alpha \in \N^4_0$, 
    there is $K^s_\alpha>0$ dependent on $\eps_0$ and $\delta$ such that 
    \begin{equation*}
      |\partial_{\tau,\xi}^\alpha \mathcal{I}_{\delta,\eps}^s(\phi)(\tau,\xi)| \leq
      K^s_\alpha (1+\tau^2+\xi^2)^{\ell_{\alpha,0}}
      \|\phi\|_{2\tilde{\ell}_{\alpha,0} + |\alpha| + 4},
    \end{equation*}
    uniformly for $\eps \in [0, \eps_0]$, for all $\phi \in \Schwartz(\R^3)$.
  \end{itemize}
\end{lemma}

\begin{proof}
  (i) We shall show that, for any $\phi \in \Schwartz(\R^3)$, the integrands
  in equations~(\ref{eq:Psi-delta-sing})-~(\ref{eq:psi-reg})
  and their derivatives with respect to $(\tau,\xi,\lambda)$ are uniformly
  bounded by an integrable function of $u \in \R^3$ for $(\tau,\xi)$ and
  $\lambda$ in a bounded set and for $\eps \in [0,\eps_0]$. Then it follows that
  all integrals in equations~(\ref{eq:Psi-delta}) are finite also when
  $\psi \in C_0^\infty(\R^3)$ is replaced by $\phi \in \Schwartz(\R^3)$, and the
  dominated convergence theorem allows us to differentiate in the integral.
  
  For the case of $\mathcal{I}_{\delta,\eps}^s$ and $\mathcal{I}_{\delta,0}^s$,
  the needed uniform upper bound follows directly from the second inequality of
  lemma~\ref{th:estimates-of-integrands} with $m > 3/2$ and $\rho = 0$.
  
  As for $\mathcal{I}^c_{\delta,\eps}$, if $\phi \in \Schwartz(\R^3)$,
  \begin{align*}
    \partial_{\tau,\xi}^\alpha \partial_\lambda^n
    \Big[e^{i\lambda \vartheta_\eps}
      \frac{\chi_\delta\tilde{P}_\eps \phi}{\cos(\pi a_0)}\Big]
    &= \partial_{\tau,\xi}^\alpha 
    \Big[\big(i\vartheta_\eps\big)^n e^{i\lambda \vartheta_\eps}
      \frac{\chi_\delta\tilde{P}_\eps \phi}{\cos(\pi a_0)}\Big] \\
    &= i^n \sum_{\beta \leq \alpha} \binom{\alpha}{\beta}
    \partial_{\tau,\xi}^{\alpha-\beta} \big[\vartheta_\eps^n
      e^{i\lambda\vartheta_\eps} \big] \partial_{\tau,\xi}^\beta\Big[
      \frac{\chi_\delta \tilde{P}_\eps \phi}{\cos(\pi a_0)} \Big].
  \end{align*}
  For any $R > 0$,
  inequality~(\ref{eq:phase-estimate-3}) in lemma~\ref{th:phase-estimates} gives
  \begin{equation*}
    \big|\partial_{\tau,\xi}^{\alpha-\beta} \big[\vartheta_\eps^n
      e^{i\lambda \vartheta_\eps} \big]\big| \leq M_{R,\alpha,n}
    (1 + u^2)^{\frac{|\alpha-\beta|}{2}},
  \end{equation*}
  for $u \in \supp \chi_\delta\big((\tau,\xi,\cdot)\big)$ and $|\lambda| \leq R$.
  Then the first inequality proven in lemma~\ref{th:estimates-of-integrands} with
  $\rho=0$ gives
  \begin{equation}
    \label{eq:bound}
    \Big| \partial_{\tau,\xi}^\alpha \partial_\lambda^n
    \Big[e^{i\lambda \vartheta_\eps}
      \frac{\chi_\delta\tilde{P}_\eps \phi}{\cos(\pi a_0)}\Big]
    \Big| \leq
    \frac{\widetilde{M}_{R,\alpha,n}}{(1+u^2)^m}
    \|\phi\|_{2(m + \tilde{\ell}_\alpha) + |\alpha|}
  \end{equation}
  uniformly for $\eps \in [0,\eps_0]$, $(\tau,\xi,\lambda)$ in the ball
  $\tau^2 + \xi^2 + \lambda^2 \leq R^2$ for all $R$, and with
  $\tilde{\ell}_{\alpha} = \max_{\beta\leq\alpha} \tilde{\ell}_{\beta,0}$.
  For $m>3/2$ the right-hand side is integrable over $\R^3$.

  (ii) The integrands in
  equations~(\ref{eq:Psi-delta-sing})-~(\ref{eq:psi-delta-reg}) are continuous
  for $\eps \to 0^+$ and the inequalities proven in
  lemma~\ref{th:estimates-of-integrands} with $m > 3/2$, $\alpha = 0$,
  and $\rho = 0$ imply upper bounds by $L^1$ functions uniformly in
  $\eps \in [0,\eps_0]$. Then, the hypothesis of the dominated convergence
  theorem are satisfied and one can pass to the limit in the integral.  

  (iii) Let us first address the case $\mathcal{I}^c_{\delta,\eps}(\psi)$ for
  $\psi \in C_0^\infty(\R^3)$. Since 
  $\mathcal{I}^c_{\delta,\eps}(\psi)(\tau, \xi,\cdot)$ is continuous, it is
  measurable and integrable on compact intervals. We write
  \begin{equation*}
    \int_0^{+\infty} \mathcal{I}_{\delta,\eps}^c(\psi)(\tau,\xi,\lambda) d\lambda =
    \int_0^1 \mathcal{I}_{\delta,\eps}^c(\psi)(\tau,\xi,\lambda) d\lambda +
    \int_1^{+\infty} \mathcal{I}_{\delta,\eps}^c(\psi)(\tau,\xi,\lambda) d\lambda.
  \end{equation*}
  The first integral on the right-hand side is bounded by
  \begin{equation*}
    \int_{\R^3} \Big| \frac{\tilde{P}_\eps \psi}{\cos(\pi a_0)}
    \chi_\delta \Big| du \leq
    C_{0,0}^c (1+\tau^2+\xi^2)^{\ell_{0,0}}
    \|\psi\|_{2(m+\tilde{\ell}_{0,0})} \int_{\R^3} \frac{du}{(1+u^2)^m},
  \end{equation*}
  in view of the first inequality of lemma~\ref{th:estimates-of-integrands}
  with $\alpha = 0$, $\rho=0$, and $m > 3/2$.

  For the second integral, we estimate the decay in $\lambda$ of
  $\mathcal{I}^c_{\delta,\eps}(\psi)(\tau,\xi,\cdot)$ by means of the stationary
  phase formula.  
  
  For $\tau \not= 0$ and $\tau^2 - \xi_3^2 \leq 0$, lemma~\ref{th:stationary-phase}
  shows that there are no stationary phase points, and we have a lower bound for
  the gradient of the phase. The standard stationary phase lemma
  \cite[Theorem 7.7.1]{Hormander1} gives
  \begin{equation*}
    \Big|\int_{\R^3} e^{i\lambda \vartheta_\eps}
    \frac{\chi_\delta \tilde{P}_\eps \psi}{\cos(\pi a_0)} du \Big| \leq
    \frac{c_{\mathrm{sp},\ell}}{\lambda^\ell} \sum_{|\rho|\leq \ell}
    \sup \Big(\frac{1}{|\nabla_u \vartheta_\eps|^{2\ell - |\rho|}} \Big|
    \partial_u^\rho \Big[
      \frac{\chi_\delta \tilde{P}_\eps \psi}{\cos(\pi a_0)} \big]\Big|\Big),
  \end{equation*}
  for any integer $\ell \geq 0$. The right-hand side is integrable in
  $\lambda \in [1,+\infty)$ if $\ell \geq 2$. We choose $\ell=2$ and 
  lemma~\ref{th:stationary-phase} (i) gives, for $|\rho|\leq \ell = 2$,
  \begin{equation*}
    \frac{1}{|\nabla_u \vartheta_\eps|^{2\ell - |\rho|}} \leq
    \frac{(1+u^2)^{4-|\rho|}}{(\pi \tau/2)^{4-|\rho|}} \leq
      \frac{16}{\pi^2 \tau^{4-|\rho|}} (1+u^2)^4,
  \end{equation*}
  and lemma~\ref{th:estimates-of-integrands} with $m=4$ and $\alpha = 0$
  implies that, for $|\rho| \leq \ell = 2$,
  \begin{equation*}
    \frac{1}{|\nabla_u \vartheta_\eps|^{2\ell-|\rho|}} \Big|
    \partial_u^\rho \Big[
      \frac{\chi_\delta \tilde{P}_\eps \psi}{\cos(\pi a_0)} \big]\Big| \leq
    \frac{16 C^c_{0,\rho} }{\pi^2 \tau^{4-|\rho|}}
    (1+\tau^2+\xi^2)^{\ell_{0,\rho}+|\rho|/2} \|\psi\|_{2\tilde{\ell}_{0,\rho} + |\rho| + 8},
  \end{equation*}
  which in turns yields
  \begin{equation*}
    \Big|\int_{\R^3} e^{i\lambda \vartheta_\eps}
    \frac{\chi_\delta \tilde{P}_\eps \psi}{\cos(\pi a_0)} du \Big| \leq
    \mathcal{B}'_{\eps_0,\delta}(\lambda,\tau)
    (1+\tau^2+\xi^2)^{m_2 + 1} \|\psi\|_{2\tilde{m}_2 +10}.
  \end{equation*}
  where
  \begin{equation*}
    \mathcal{B}'_{\eps_0,\delta}(\lambda,\tau) =
    \frac{16 c_{\mathrm{sp,2}}}{\pi^2 \lambda^2} \sum_{|\rho|\leq 2}
    \frac{C_{0,\rho}^c}{\tau^{4-|\rho|}}, \quad \lambda \geq 1.
  \end{equation*}
  Since by definition $m_2 \leq m_4$ this proves the claim for
  $\tau^2 - \xi_3^2 \leq 0$.
  
  For $\tau^2 - \xi_3^2 >0$, there is an isolated non-degenerate stationary phase
  point $u = u_c(\tau,\xi)$ as shown in lemma~\ref{th:stationary-phase}.  
  In this case, the stationary phase lemma \cite[Theorem 7.7.5]{Hormander1} gives
  \begin{multline*}
    \Big| \int_{\R^3} e^{i\lambda \vartheta_\eps}
    \frac{\chi_\delta \tilde{P}_\eps \psi}{\cos(\pi a_0)} du \Big| \leq
    \big|\det(\lambda \vartheta_\eps''/2\pi)\big|^{-\frac{1}{2}}
    \sum_{j=0}^{\ell-1} \lambda^{-j} |L_{j,\tau,\xi}(\psi)| \\
    + \frac{c'_{\mathrm{sp},\ell}}{\lambda^\ell} \sum_{|\rho|\leq 2\ell}
    \sup \Big| \partial_u^\rho \Big[
      \frac{\chi_\delta \tilde{P}_\eps \psi}{\cos(\pi a_0)} \big]\Big|,
  \end{multline*}
  where the Hessian matrix $\vartheta_\eps''$ and $L_{j,\tau,\xi}(\psi)$
  are differential operators acting on $\psi$ and evaluated at the stationary
  phase point $u=u_c(\tau,\xi)$. We seek an upper bound in $L^1$, hence it is
  sufficient to choose $\ell = 2$, and we have 
  \begin{align*}
    L_{0,\tau,\xi}(\psi) &= \frac{\chi_\delta\tilde{P}_\eps \psi}{\cos(\pi a_0)},
    \\
    L_{1,\tau,\xi}(\psi) &= -\frac{i}{2}
    \langle (\vartheta_\eps'')^{-1} \nabla_u, \nabla_u\rangle
    \Big[\frac{\chi_\delta \tilde{P}_\eps \psi}{\cos(\pi a_0)}\Big],
  \end{align*}
  evaluated at $(\tau,\xi,u) = (\tau,\xi,u_c)$. Using the results of
  lemma~\ref{th:stationary-phase} (ii) together with the first inequality of
  lemma~\ref{th:estimates-of-integrands} yields
  \begin{align*}
    \frac{|L_{0,\tau,\xi}(\psi)|}{|\det(\vartheta_\eps'')|^{1/2}} &\leq
    \frac{1}{(\pi \tau)^{3/2}} (1+u_c^2)^{5/4}
    \Big|\frac{\chi_\delta\tilde{P}_\eps \psi}{\cos(\pi a_0)}\Big| \\
    &\leq \frac{1}{(\pi \tau)^{3/2}}
    C_0 (1+\tau^2+\xi^2)^{m_0} \|\psi\|_{2\tilde{m}_0+4}, \\
    \frac{|L_{1,\tau,\xi}(\psi)|}{|\det(\vartheta_\eps'')|^{1/2}} &\leq
    \frac{3/2}{(\pi\tau)^{5/2}} 
    \max_{|\beta|=2} \Big|(1+u_c^2)^{11/4} \partial_u^\beta \Big[
      \frac{\chi_\delta\tilde{P}_\eps \psi}{\cos(\pi a_0)}\big]\Big| \\
    &\leq \frac{3/2}{(\pi\tau)^{5/2}}
    C_2 (1+\tau^2+\xi^2)^{m_2 + 1} \|\psi\|_{2\tilde{m}_2 + 8}, \\
    \intertext{and}
    \Big| \partial_u^\rho \Big[
      \frac{\chi_\delta \tilde{P}_\eps \psi}{\cos(\pi a_0)} \big]\Big| &\leq
    C_4 (1+\tau^2+\xi^2)^{m_4 + 2} \|\psi\|_{2\tilde{m}_4 + 4},
  \end{align*}
  where $C_j = \max_{|\rho| \leq j} C^c_{0,\rho}$, $j \in \N_0$. Therefore,
  \begin{align*}
    \Big| \int_{\R^3} e^{i\lambda \vartheta_\eps}
    \frac{\chi_\delta \tilde{P}^\eps \psi}{\cos(\pi a_0)} du \Big|
    &\leq \mathcal{B}_{\eps_0,\delta}''(\lambda,\tau)
    (1+\tau^2+\xi^2)^{m_4 + \frac{5}{2}} \|\psi\|_{2\tilde{m}_4 + 8},
  \end{align*}
  with
  \begin{equation*}
    \mathcal{B}_{\eps_0,\delta}''(\lambda,\tau) =
    \Big(\frac{2\pi}{\lambda}\Big)^{3/2} \Big[
    \frac{C_0}{(\pi\tau)^{3/2}} +
    \frac{3}{2} \frac{C_2}{(\pi\tau)^{5/2}}\frac{1}{\lambda} \Big] +
    \frac{C_4'}{\lambda^2}, \quad \lambda \geq 1,
  \end{equation*}
  with the constant $C_4'$ depending on $\delta$ and $\eps_0$.
  At last we can combine the estimates obtained in the three cases by defining
  $\mathcal{B}_{\eps_0,\delta} =
  \mathcal{B}_{\eps_0,\delta}'+\mathcal{B}_{\eps_0,\delta}''$ for  
  $\lambda \geq 1$ and extending it to a constant for $\lambda \in [0,1]$.
  This gives the claimed estimate for $\psi \in C_0^\infty(\R^3)$. Then we
  observe that for any $(\tau,\xi,\lambda)$ with $\tau \not=0$, the map
  $\psi \mapsto \mathcal{I}_{\delta,\eps}^c(\psi)(\tau,\xi,\lambda)$ defines a
  linear functional on $C_0^\infty(\R^3)$, bounded by a Schwartz
  semi-norm. Since $C_0^{\infty}(\R^3)$ is dense in $\Schwartz(\R^3)$ the
  inequality remains true for $\psi \in \Schwartz(\R^3)$: given a sequence
  $\psi_i \in C_0^{\infty}$, $i \in \N$, converging to
  $\phi \in \Schwartz(\R^3)$ in the topology of $\Schwartz$, 
  inequality~(\ref{eq:bound}) with $\alpha=0$ and $n=0$ shows that
  $|\mathcal{I}_{\delta,\eps}^c(\psi_i)| \to |\mathcal{I}_{\delta,\eps}^c(\phi)|$, 
  while by definition of convergence we have $\|\psi_i\|_j \to \|\phi\|_j$;
  then we can pass to the limit $i \to +\infty$ on both sides of the inequality.
  
  (iv) First, let $\psi \in C_0^\infty(\R^3)$. From (i), we have that
  \begin{equation*}
    \partial_{\tau,\xi}^\alpha \mathcal{I}_{\delta,\eps}^c(\psi)(\tau,\xi,\lambda)
    = i^n \sum_{\beta \leq \alpha} \binom{\alpha}{\beta} \int_{\R^3}
    \partial_{\tau,\xi}^{\alpha-\beta} \big[e^{i\lambda\vartheta_\eps} \big]
    \partial_{\tau,\xi}^\beta\Big[
      \frac{\chi_\delta \tilde{P}_\eps \psi}{\cos(\pi a_0)} \Big] du.
  \end{equation*}
  We exploit the second identity in lemma~\ref{th:phase-estimates} for the
  factor $\partial_{\tau,\xi}^{\alpha-\beta} e^{i\lambda \vartheta_\eps}$ with
  the result that
  \begin{equation*}
    \partial_{\tau,\xi}^\alpha \mathcal{I}_{\delta,\eps}^c(\psi)(\tau,\xi,\lambda) =
    \sum_{j=0}^{|\alpha|} \lambda^j \cosh(\pi\eps)^j
    \int_{\R^3} e^{i\lambda \vartheta_\eps} \psi_{\delta,\eps,j} (\tau,\xi,u) du,
  \end{equation*}
  where $\psi_{\delta,\eps,j}$ are combinations of polynomials in $\tan(\pi a_0)$
  and $(\tau,\xi)$-derivatives of the function
  $\chi_\delta \tilde{P}_\eps \psi / \cos(\pi a_0)$. The assumption states that
  $(\tau,\xi)$ varies in a connected, compact set $K$ with non-empty interior,
  therefore the continuous function $(\tau,\xi) \mapsto \tau^2 - \xi_3^2$ maps
  $K$ into a closed interval $[c_1,c_2] \subset \R$. The assumption with $n=0$
  also implies that $|\tau^2-\xi_3^2|\geq \delta_K$, hence zero is not in the
  interval, that is, either $c_1 < c_2 < 0$, 
  or $0 < c_1 < c_2$. In the first case, $\tau^2 - \xi_3^3 < 0$ and
  lemma~\ref{th:stationary-phase} established that the phase $\vartheta_\eps$
  has no critical points. In the second case, let $n_0 \in \N_0$ be the unique
  non-negative integer for which
  \begin{equation*}
    n_0^2 + \delta_K \leq \tau^2 - \xi_3^2 \leq
    (n_0+1)^2 - \delta_K, \quad (\tau,\xi) \in K.
  \end{equation*}
  We have shown in the proof of lemma~\ref{th:stationary-phase} that
  $a_0(\tau,\xi,u_c) = \sqrt{\tau^2-\xi_3^2}$ is the value of $a_0$ at the
  stationary phase point $u_c(\tau,\xi)$. If $n_c \in \{n_0,n_0+1\}$ is the
  closest integer to $a_0(\tau,\xi,u_c)$, we find
  $\big|a_0(\tau,\xi,u_c) - n_c\big| \geq c(\delta_K)$, and for any $n \in \Z$,
  \begin{equation*}
    \big|a_0(\tau,\xi,u_c) - n\big| \geq
    \big|a_0(\tau,\xi,u_c) - n_c\big| \geq c(\delta_K).
  \end{equation*}
  We can now choose $\delta$ sufficiently small that $\delta/3 < c(\delta_K)$,
  which depends only on the set $K$, and we obtain $\chi_\delta(\tau,\xi,u_c) = 0$:
  there are no critical points of the phase in the support of $\chi_\delta$ for
  $(\tau,\xi) \in K$. 
  We can then apply the same argument as in (iii) in order to show that each of
  the integrals on the right-hand side decreases like $1/\lambda^\ell$ for all
  positive integers $\ell$ and since $(\tau,\xi)$ varies in a compact set $K$ we
  obtain the claimed inequality with functions
  $\mathcal{B}^{(\alpha)}_{\eps_0,\delta}$ depending on
  $\sup \{(1 + \tau^2 + \xi^2) :\; (\tau,\xi) \in K\}$ and the Schwartz
  semi-norms of $\psi$. Then the inequality can be extended to
  $\phi \in \Schwartz(\R^3)$ by choosing a sequence $\psi_n \in C_0^{\infty} (\R^3)$
  converging to $\phi$ in $\Schwartz(\R^3)$.
  
  (v). The derivatives of $\mathcal{I}_{\delta,\eps}^s(\phi)$ for
  $\phi \in \Schwartz(\R^3)$ are estimated by the second 
  inequality in lemma~\ref{th:estimates-of-integrands} with $\rho=0$, and $m=2$.  
\end{proof}

We conclude this section with the proof that the functions defined in
equation~(\ref{eq:Phi-r-eps}) are of the form (\ref{eq:r-tilde}).
The function in the following lemma are defined in
section~\ref{sec:notation-magnetized}. 

\begin{lemma}
  \label{th:condition-59}
  The functions $e^{-i\pi a_\eps} \Phi_j P^\pm_{\eps}$ and
  $e^{-i\pi a_\eps} \Psi_j P^\pm_{\eps}$ satisfy condition~(\ref{eq:polynomial-growth}).
\end{lemma}

\begin{proof}
  The considered functions are all defined as integrals in $\lambda$ over a
  compact interval and with smooth integrands, hence they are of class $C^\infty$.
  As for the polynomial-growth estimate, the derivatives of any
  \begin{equation*}
    F_\eps \in \{e^{-i\pi a_\eps} \Phi_j P^\pm_{\eps}:\, j=1,2,3 \} \cup
    \{e^{-i\pi a_\eps} \Psi_j P^\pm_{\eps}:\, j=1,2,3\}
  \end{equation*}
  are given by 
  \begin{equation*}
    \partial^\mu F_\eps(\tau,\xi,u) =
    \int_{0}^{2\pi} \Pi_\mu(\tau,\xi,u,\lambda)
    e^{i(\lambda - \pi) a_\eps - i\tilde{\zeta}} d\lambda,
  \end{equation*}
  where $\tilde{\zeta}(\tau,\xi,u,\lambda) = (\xi_1 u_1 + \xi_2 u_2)\sin\lambda
  \mp (\xi_2 u_1 - \xi_1 u_2)(\cos\lambda -1)$ and $\Pi_\mu$ is a linear
  combination of monomials of the form
  \begin{equation*}
    \gamma^m \tau^{\ell_0} \xi^\alpha u^\beta \lambda^{\ell_1} (\cos\lambda)^{\ell_2}
    (\sin\lambda)^{\ell_3}, 
  \end{equation*}
  for $m \in \Z$, $\alpha,\beta \in \N_0^3$, $\ell_j \in \N_0$. This can be
  checked directly for $\mu=0$ and $|\mu| = 1$, then extended to all
  $\mu \in \N_0^7$ by induction. We also have that $\Pi_\mu$ is independent of
  $\eps$. For $\lambda \in [0,2\pi]$ we readily have
  \begin{equation*}
    |\gamma^m \tau^{\ell_0} \xi^\alpha u^\beta \lambda^{\ell_1} (\cos\lambda)^{\ell_2}
    (\sin\lambda)^{\ell_3}| \leq (2\pi)^{\ell_1} (1+\tau^2+\xi^2)^{(\ell_0 + |\alpha|)/2}
    (1+u^2)^{(m+|\beta|)/2}, 
  \end{equation*}
  and (since we have chosen $\kappa = 1$)
  \begin{equation*}
    |e^{i(\lambda - \pi) a_\eps - i\tilde{\zeta}}| \leq e^{\pi \eps}, \quad
    \text{ for } \lambda \in [0,2\pi],
  \end{equation*}
  hence
  \begin{equation*}
    |\partial^\mu F_\eps(\tau,\xi,u)| \leq C_{F,\mu} e^{\pi\eps}
    (1+\tau^2+\xi^2)^{\hat{m}_{F,\mu}} (1+u^2)^{\hat{n}_{F,\mu}},
  \end{equation*}
  with constant $C_{F,\mu}$ and exponents $\hat{m}_{F,\mu}, \hat{n}_{F,\mu}$
  depending on the specific function $F$. This is the claimed inequality if
  $\eps \in [0,\eps_0]$. By means of the same identity one can also check
  continuity of all derivatives for $\eps \to 0$, since
  \begin{align*}
    \big|\partial^\mu F_\eps(\tau,\xi,u) - \partial^\mu F_0(\tau,\xi,u)\big|
    &\leq \int_0^{2\pi} \big|\Pi_\mu(\tau,\xi,u,\lambda)\big| \cdot
    \big|e^{-(\lambda-\pi)\eps} - 1\big| d\lambda \\
    &\leq \tilde{C}_{F,\mu}(\tau,\xi,u) |e^{\pi\eps} - 1|,
  \end{align*}
  and thus, as $\eps \to 0^+$, 
  $\partial^\mu F_\eps(\tau,\xi,u) \to \partial^\mu F_0(\tau,\xi,u)$ pointwise.
\end{proof}

\begin{remark}
  This induction argument has the advantage of avoiding lengthy calculations, but
  does not allows us to compute the exponents $\hat{m}_{F,\mu}$ and $\hat{n}_{F,\mu}$
  explicitly.
\end{remark}

%
\appendix

\section{Notation and basic definitions}
\label{sec:basic-def}

Positive integers are denoted by $\N = \{1,2,\ldots\}$ and natural numbers by
$\N_0 = \N \cup \{0\}$ including zero. Integers including zero, reals and
complex numbers  are denoted by $\Z$, $\R$, and $\C$, respectively. Strictly
positive real numbers are denoted by $\R_+ = \{x \in \R :\; x>0\}$.
We use the standard multi-index notation, a multi-index of dimension $n \in \N$
being an element $\alpha = (\alpha_1, \ldots, \alpha_n) \in \N_0^n$.
The length of a multi-index is $|\alpha| = \sum_i \alpha_i$, and
for $x = (x_1, \ldots,x_n)\in\R^n$, we write
$x^\alpha = x_1^{\alpha_1} \cdots x_n^{\alpha_n}$ and
$\partial_x^\alpha = \partial_{x_1}^{\alpha_1} \cdots \partial_{x_n}^{\alpha_n}$.

As usual $C^k(\R^n)$ denotes the space of $k$-times continuously differentiable
functions of $n$ variables, $C^\infty(\R^n) = \bigcap_k C^k(\R^n)$.

\begin{definition}
  We denote by  $C^\infty_b(\R^n)$ the space of functions
  $u \in C^\infty(\R^n)$ that are bounded with all derivatives bounded, that is,
  for every $\alpha \in \N_0^n$ there are real constants
  $C_\alpha > 0$ such that $|\partial_x^\alpha u(x)| \leq C_\alpha$ uniformly.
\end{definition}

We work with tempered distributions on $\R^n$. As usually we denote by
$\Schwartz(\R^n)$ the Schwartz space of rapidly decreasing functions on
$\R^n$. Those are functions $\varphi \in C^\infty(\R^n)$ for which 
\begin{equation*}
  \sup_{x\in\R^n} \big| x^\alpha \partial_x^\beta \varphi(x)\big| < \infty.
\end{equation*}
Semi-norms in $\Schwartz$ are defined by
\begin{equation*}
  \|\varphi\|_j = \max_{|\alpha| + |\beta| \leq j} \sup_{x\in\R^n}
  \big| x^\alpha \partial_x^\beta \varphi(x)\big|, \quad
  j \in \N_0.
\end{equation*}
(With the condition $|\alpha| + |\beta| \leq j$, instead of equality, these are
actually norms.) The countable set of semi-norms gives the Schwartz space the
topology of a Fr\'echet space and its topological dual $\Schwartz^\prime (\R^n)$
is the space of tempered distributions, namely, the space of continuous linear
functionals $u : \Schwartz(\R^n) \to \C$; the action of $u \in \Schwartz^\prime$
on a test-function  $\varphi \in \Schwartz$ is equivalently denoted by
$\langle u, \varphi \rangle = u(\varphi)$. In this case continuity of a linear
functional $u$ means that there exists an integer $j \geq 0$ and a constant $C_j > 0$
such that $|\langle u, \varphi \rangle| = |u(\varphi)| \leq C_j \|\varphi\|_j$. 

We also note that for any $\varphi \in \Schwartz$, for every $m \in \R$, and
$\beta \in \N_0^d$, there is a constant $C_{m,\beta} > 0$ such that 
\begin{equation}
  \label{eq:polynomial-bound}
  (1+x^2)^m \big|\partial_x^\beta \varphi(x)\big| \leq C_{m,\beta},
\end{equation}
uniformly in $x$. For $m \leq 0$, this inequality follows from
$(1+x^2)^m \leq 1$, while for $m > 0$, one can observe that
$(1+x^2)^m \leq (1 + x^2)^l$ for any integer $l \geq m$, and use the
multi-nomial formula.

The Fourier transform of a function $\varphi \in \Schwartz(\R^n)$ is defined by
\begin{equation*}
  \hat{\varphi}(\xi) = \int_{\R^n} \varphi(x) e^{-i\xi \cdot x} dx.
\end{equation*}
The Fourier transform $\mathcal{F} : \varphi \mapsto \hat{\varphi}$ is
continuous from $\Schwartz$ into itself and extends to $\Schwartz^\prime$ by
duality. Specifically, this means
\begin{equation*}
  \hat{u}(\varphi) = u(\hat{\varphi}).
\end{equation*}
That $\hat{u}$ is a continuous linear functional follows from the continuity of
$u$ and of the Fourier transform $\varphi \to \hat{\varphi}$. For a function 
$\varphi(t,x)$ in $\Schwartz(\R^{1+d})$ of physical time and space
($d=3$), we write 
\begin{equation*}
  \hat{\varphi} (\omega, k) = \int_{\R^{1+d}} \varphi(t,x) 
  e^{i\omega t - i k \cdot x} dt dx,
\end{equation*}
where, $\omega \in \R$ is the angular frequency and $k \in \R^d$ is the wave vector. 

\begin{definition}[Fourier multipliers]
  \label{def:FourierMultipliers}
  Let $a \in C(\R^n)$ and $m, C_0 \in \R$, be such that
  $|a(\xi)| \leq C_0 (1+|\xi|)^m$. The Fourier multiplier with symbol $a$ is the
  continuous linear operator on $\Schwartz(\R^n)$ defined by
  \begin{equation*}
    A\varphi(x) = \mathcal{F}^{-1} \big(a \mathcal{F}(\varphi)\big)(x)
    = \frac{1}{(2\pi)^n} \int_{\R^n} e^{ix \cdot \xi} a(\xi) \hat{\varphi}(\xi)
    d\xi,
  \end{equation*}
  where the inverse Fourier transform is an absolutely convergent integral, and
  the integrand has partial derivatives in $x$ of order $k$ bounded by the
  $L^1$-function $\xi \mapsto|\xi|^k |a(\xi)| |\hat{\varphi}(\xi)|$; hence,
  $A: \Schwartz(\R^n) \to C^\infty_b(\R^n) \subset \Schwartz^\prime(\R^n)$.
\end{definition}

If in addition, $a \in C^\infty(\R^n)$ with all derivatives bounded by
\begin{equation*}
  |\partial_\xi^\alpha a(\xi)| \leq C_\alpha (1 + |\xi|)^{m_\alpha},
\end{equation*}
for all $\xi \in \R^n$ and $\alpha \in \N^n_0$, with constants
$C_\alpha, m_\alpha \in \R$ depending only on $\alpha$,
then for any $\varphi \in \Schwartz(\R^n)$, $a \hat{\varphi} \in \Schwartz(\R^n)$
and the inverse Fourier transform gives 
$\mathcal{F}^{-1}(a \hat{\varphi}) \in \Schwartz(\R^n)$. Such operations are
continuous on $\Schwartz$. Hence, the corresponding Fourier multiplier defined
by $a$ is a continuous linear operator form $\Schwartz(\R^n) \to \Schwartz(\R^n)$.

Fourier multipliers are relevant to the case of a uniform plasma
equilibrium. Specifically, the high-frequency component of the current density
induced in a uniform plasma by an electromagnetic disturbance is related to the
electric field of the disturbance by a Fourier multiplier.

\section{Proofs for the case study of section~\ref{sec:simple-case}}
\label{sec:proofs-toy-model}

\begin{proof}[Proof of proposition~\ref{th:simple-model-causal-solution}]
  With an initial condition $u_0$ in $C_b^\infty$, the solution of this problem is
  \begin{equation*}
    u(t,x) = u_0(x) + \int_0^t v(s,x)ds,
  \end{equation*}
  and we have $u \in C^\infty (\R^{1+d})$. In addition, $u \in C_b^\infty (\R^{1+d})$,
  thanks to $v \in \mathcal{S}$. This follows upon considering the derivatives 
  $\partial_t^\ell \partial_x^\alpha u(t,x)$. For $\ell > 1$ one has
  $\partial_t^\ell \partial_x^\alpha u(t,x) = \partial_t^{\ell-1} \partial_x^\alpha v(t,x)$
  with $v \in \mathcal{S}(\R^{1+d})$, while for $\ell = 0$ and for every $m>1/2$
  we have  
  \begin{equation*}
    |\partial_x^\alpha u(t,x)| \leq |\partial_x^\alpha u_0(x)| + 
    \sup_{s\in\R} \big| (1+s^2)^m \partial_x^\alpha v(s,x) \big|
    \int_{-\infty}^{+\infty} \frac{ds}{(1+s^2)^m},
  \end{equation*}
  with $\partial_x^\alpha u_0$ bounded by hypothesis. 
  
  The initial condition 
  \begin{equation*}
    u_0 (x) = \int_{-\infty}^0 v(s,x)ds,
  \end{equation*}
  belongs to $\mathcal{S}(\R^d)$ and thus to
  $C^\infty_b(\R^d)$. The corresponding unique solution in $C_b^{\infty}(\R^{1+d})$ is
  \begin{equation*}
    u(t,x) =  (\int_{-\infty}^0 + \int_0^t) v(s,x)ds 
    =\int_{-\infty}^t v(s,x)ds.
  \end{equation*}    
  For every integers $\ell \geq 0$ and $t \in \R$, 
  $\partial_t^\ell u (t, \cdot) \in \mathcal{S}(\R^d)$ and $u$ satisfies the
  condition $\lim_{t\to-\infty} u(t,x) = 0$. As for the uniqueness, if 
  $u_* \in C^\infty_b (\R^d)$ is another initial condition such that the limit
  for $t \to -\infty$ of the corresponding solution vanishes, then 
  \begin{equation*}
    0 = u_*(x) + \int_0^{-\infty} v(s,x)ds = u_*(x) - u_0(x),
  \end{equation*}
  which shows that $u_* = u_0$. Since, in particular, 
  $u \in  L^\infty(\R^{1+d})$, it defines a tempered distribution by integration,
  \begin{equation*}
    \langle u, \varphi \rangle = \int_{\R^{1+d}} u(t,x) \varphi(t,x)dtdx,
    \quad \forall \varphi \in \Schwartz(\R^{1+d}).
  \end{equation*} 
  The continuity of the map $\varphi \mapsto  \langle u, \varphi \rangle$
  follows for $m > (1+d)/2$ from 
  \begin{equation*}
    |\langle u, \varphi \rangle | \leq
    \| u\|_{L^\infty (\R^{1+d})} \sup_{y\in\R^{1+d}} \big|
    (1+y^2)^m \varphi(y)\big| \int_{\R^{1+d}} \frac{dy}{(1+y^2)^m},
  \end{equation*}
  and, if $m > 1/2$ is an integer,
  \begin{equation*}
    \sup_{y\in\R^{1+d}} \big| (1+y^2)^m \varphi(y)\big| \leq 
    C_m \|\varphi\|_{2m},
  \end{equation*}
  On the other hand,
  \begin{equation*}
    |u(t,x)| \leq \sup_{t \in \R} \big|(1+t^2)^\mu v(t,x)\big|
    \int_{-\infty}^{+\infty} \frac{ds}{(1+s^2)^\mu}, \quad
    \mu > 1/2,
  \end{equation*}
  and thus
  \begin{equation*}
    \| u \|_{L^\infty (\R^{1+d})} \leq
    (\int_{-\infty}^{+\infty} \frac{ds}{(1+s^2)^\mu}) 
    \sup_{(t,x)\in \R^{1+d}} \big| (1+t^2)^\mu v(t,x)\big|.
  \end{equation*}
  Since, if we choose $\mu > 1/2$ in $\N$,
  \begin{equation*}
    \sup_{(t,x)\in \R^{1+d}} \big| (1+t^2)^\mu v(t,x)\big| \leq
    \sup_{(t,x)\in \R^{1+d}} \big| (1+t^2 + x^2)^\mu v(t,x)\big| 
    \leq C_{\mu} \|v\|_{2\mu},
  \end{equation*}
  we have
  \begin{equation*}
    \|u\|_{L^\infty(\R^{1+d})} \leq \tilde{C}_\mu \|v\|_{2\mu}, \quad \text{and} \quad
    |\langle u, \varphi \rangle| \leq K_{m,\mu} \|v\|_{2\mu} \|\varphi\|_{2m},
  \end{equation*}
  for $m > (1+d)/2$ and $\mu > 1/2$, both integers.
\end{proof}

\begin{proof}[Proof of proposition~\ref{th:characterization}]
  If $u$ is a solution in $\Schwartz^\prime$ of the damped equation, its
  Fourier transform satisfies
  \begin{equation*}
    -i (\omega + i \nu)\hat{u}^\nu = \hat{v}.
  \end{equation*}
  For $\nu >0$, this has one and only one solution
  \begin{equation*}
    \hat{u}^\nu (\omega,k) = i \frac{\hat{v}(\omega, k) }{\omega + i \nu},
  \end{equation*}
  and we have $\hat{u}^\nu \in \Schwartz(\R^{1+d})$ since $(\omega + i \nu)^{-n}$
  is smooth and polynomially bounded for $\omega \in \R$ and for all integers 
  $n >0$. Hence, its inverse Fourier transform belongs to $\Schwartz(\R^{1+d})$.
  We recall that
  \begin{equation*}
    u^\nu(-t, -x) = (2\pi)^{-(1+d)} \hat{\hat{u}}^\nu(t,x),
  \end{equation*}
  so that, if  $\check{\varphi}(t,x) = \varphi(-t,-x)$,
  \begin{equation*}
    \langle u^\nu, \varphi \rangle = \langle (2\pi)^{-(1+d)}
    \hat{\hat{u}}^\nu, \check{\varphi} \rangle
    = \langle \hat{u}^\nu, (2\pi)^{-(1+d)} 
    \hat{ \check{\varphi}} \rangle. 
  \end{equation*}
  Let us introduce, for any $\varphi \in \mathcal{S}(\R^{1+d})$,   
  the function $\hat{\psi} \in \Schwartz(\R)$ given by
  \begin{equation*}
    \hat{\psi} (\omega) = (2\pi)^{-(1+d)}
    \int_{\R^d} \hat{v}(\omega, k) \hat{\varphi} (-\omega, -k) dk.
  \end{equation*}
  We deduce
  \begin{equation*}
    \langle u^\nu, \varphi \rangle = 
    \int_\R \frac{i\hat{\psi}(\omega)}{\omega + i \nu} d\omega.
  \end{equation*}
  However, the sequence $\{u^\nu\}_{\nu \in\R_+}$ is not bounded in
  $\mathcal{S}$. In order to take the limit, we use the identity
  \begin{equation*}
    \frac{i}{\omega + i \nu} = \int_0^{+\infty} e^{i(\omega + i \nu)t} dt,
  \end{equation*}
  and note that the function 
  $(t,\omega) \mapsto e^{i(\omega + i \nu)t} \hat{\psi}(\omega)$ belongs to
  $L^1 (\R_+ \times \R)$ so that, by Fubini's theorem,
  \begin{align*}
    \langle u^\nu, \varphi \rangle &= \int_0^{+\infty} e^{-\nu t} 
    (\int_{-\infty}^{+\infty} e^{i\omega t} \hat{\psi}(\omega) d\omega) dt \\
    &= 2\pi \int_0^{+\infty} e^{-\nu t} \psi(-t)dt.
  \end{align*}
  Also the Fourier inversion theorem gives
  \begin{align*}
    \psi(t) &= \frac{1}{2\pi} \int_{-\infty}^{+\infty} e^{-i\omega t} 
    \hat{\psi }(\omega) d\omega \\
    &= \frac{1}{2\pi} \int_{-\infty}^{+\infty} e^{-i\omega t} 
    \int_{\R^d} \hat{v}(\omega, k) (2\pi)^{-(1+d)} \hat{\varphi}
    (-\omega, -k) dk d\omega \\
    &= \frac{1}{(2\pi)^{d+2}} \int_{-\infty}^{+\infty} e^{-i\omega t}
    \int_{\R^d}
    \int_{\R^{1+d}} e^{-i(k \cdot x_1 - \omega t_1)} v(t_1,x_1) dt_1 dx_1 \\
    & \qquad\qquad\qquad\qquad\qquad \times
    \int_{\R^{1+d}} e^{+i(k \cdot x_2 - \omega t_2)} \varphi(t_2,x_2) dt_2 dx_2 
    dk d\omega \\
    &= \frac{1}{(2\pi)^2} \int_{-\infty}^{+\infty} e^{-i\omega t}
    \int_{\R^{1+d}} e^{+i \omega t_1} v(t_1,x_1) dt_1
    \int_\R e^{-i\omega t_2} \varphi(t_2,x_1) dt_2 dx_1 d\omega\\    
    &= \frac{1}{2\pi} \int_{\R^{1+d}} v(t',x_1) \varphi(t' - t, x_1) dt' dx_1,
  \end{align*}
  and this yields
  \begin{equation*}
    \langle u^\nu, \varphi \rangle = 
    \int_0^{+\infty} \int_{\R^{1+d}} e^{-\nu t''}
    v(t',x) \varphi(t' + t'', x) dt'dx dt''.
  \end{equation*}
  By the change of variables $t'' = t -s$, $t' = s$, one has
  \begin{equation*}
    \langle u^\nu, \varphi \rangle = 
    \int_{\R^{1+d}} \int_{-\infty}^t e^{-\nu (t-s)}
    v(s,x) \varphi(t, x) ds dt dx,
  \end{equation*}  
  which shows that the distribution $u^\nu$ is regular and equal to the $C^\infty$
  function 
  \begin{equation*}
    u^\nu (t,x) = \int_{-\infty}^t e^{-\nu (t-s)} v(s,x) ds,
  \end{equation*}
  as claimed. In addition, $e^{-\nu ( t-s)} \leq 1$ for $s \in (-\infty,t]$, and
  with $u$ defined in proposition~\ref{th:simple-model-causal-solution}, 
  \begin{equation*}
    \lim_{\nu \to 0^+} \langle u^\nu, \varphi \rangle = 
    \int_{\R^{1+d}} \int_{-\infty}^t
    v(s,x) \varphi(t, x) ds dt dx = \int u(t,x) \varphi(t,x) dtdx,
  \end{equation*}
  for every $\varphi \in \Schwartz(\R^{1+d})$, that is, 
  \begin{equation*}
    u^\nu \to u, \quad \text{in }
    \Schwartz^\prime(\R^{1+d}),
  \end{equation*}
  and the limit is the causal solution of proposition 
  \ref{th:simple-model-causal-solution}.
\end{proof}

\section{Causal solutions of linear kinetic equations}
\label{sec:causal-solutions}

Let $\Omega$ be a domain in $\R^d$ and $\varphi \in C^\infty( \R \times \Omega)$.
We denote $\varphi_t = \varphi(t,\cdot)$ and assume that
\begin{itemize}
\item[(i)] $\varphi_t : \Omega \to \Omega$ for every $t \in \R$,
\item[(ii)] $\varphi_0 = \mathrm{Id}$ is the identity map on $\Omega$,
\item[(iii)] $\varphi_{t+s} = \varphi_t \circ \varphi_s$ for every $t,s\in \R$.
\item[(iv)] For any multi-index $\alpha \in \N^d$ there are constants
  $C, m \in \R$ such that
  \begin{equation*}
    \big|\partial_x^\alpha \varphi_t(x) \big| \leq C
    \big(1+t^2 + |\varphi_t(x)|^2 \big)^m.
  \end{equation*}
\end{itemize}
\begin{remark}
  \label{rem:flows-cond-iv}
  In condition (iv) the case $\alpha = 0$ is excluded because, for $\alpha=0$,
  $|\varphi_t(x)| \leq \big(1+t^2 + |\varphi_t(x)|^2 \big)^{1/2}$ for any map 
  $\varphi$, and the inequality in (iv) is trivially verified.
\end{remark}

As a consequence of properties (i)-(iii), $\{\varphi_t :\; t \in \R\}$ is a
one-parameter Abelian group of diffeomorphisms of $\Omega$. Particularly,
$\varphi_t^{-1} = \varphi_{-t}$. We can associate to $\varphi_t$ the  autonomous
vector field $X \in C^\infty(\Omega, \R^d)$ defined by 
\begin{equation}
  \label{eq:generator}
  X(x) = \frac{d\varphi_t(x)}{dt} \bigg|_{t=0},
\end{equation}
and, with $x = \varphi_t(x_0)$  for every $x_0 \in \Omega$, we have
\begin{equation*}
  X\big(\varphi_t(x_0)\big) = \frac{d\varphi_s }{ds}
  \big(\varphi_t(x_0)\big)\bigg|_{s=0} =
  \frac{d}{ds} \varphi_{t+s}(x_0)\bigg|_{s=0} = \frac{d\varphi_t(x_0)}{dt},
\end{equation*}
which shows that the orbit $x(t) = \varphi_t(x_0)$ of the group solves the
Cauchy problem 
\begin{equation*}
  \frac{dx}{dt} = X(x), \qquad x(0) = x_0,
\end{equation*}
globally in time and for every initial point $x_0 \in \Omega$. While conditions
(i)-(iii) are standard properties of flows of autonomous vector fields,
condition (iv) is specific to the application considered here. An example of
flow satisfying condition (iv) is
\begin{equation}
  \label{eq:flow}
  \varphi_t(x) = A(t)x,
\end{equation}
where $A(t)$ is a $d \times d$ matrix satisfying suitable conditions.

\begin{proposition}
  \label{th:example-flow}
  If the matrix-valued function $A \in C^\infty(\R,\R^{d\times d})$ is such
  that, $A(0) = I$, $A(t+s) = A(t) A(s)$ and, in the norm induced by the
  standard Euclidean norm in $\R^d$,    
  \begin{equation*}
    \|A(t)\| \leq C (1+t^2)^m,
  \end{equation*}
  for given constants $C,m>0$, then the map (\ref{eq:flow}) satisfies conditions
  (i)-(iv) above with $\Omega = \R^d$.
\end{proposition}

\begin{proof}
  Condition (i) is true because of definition (\ref{eq:flow}), while (ii) and (iii)
  follow  directly from the assumptions. As for condition (iv), since the flow
  is linear in $x$ the spatial derivative vanish for $|\alpha| \geq 2$.
  For $|\alpha| = 1$, condition (iv) is implied by the polynomial growth of $A$. 
\end{proof}

\begin{remark}
  \label{rem:flows}
  The characteristics flow of the kinetic equation considered in
  section~\ref{sec:Landau} and the non-relativistic version of the one in
  section~\ref{sec:uniform} (i.e., equation~(\ref{eq:magnetized-characteristics})
  with $\gamma=1$) are both special cases of~(\ref{eq:flow}). The relativistic
  flow~(\ref{eq:magnetized-characteristics}) is not of the same form since
  $\Omega_c$ depends on momentum. However, it satisfies assumptions (i)-(iv). 
\end{remark}

In this section we consider the linear advection equation
\begin{equation}
  \label{eq:linear-adv-eq}
  \partial_t f + X \cdot \nabla f = g, \quad \text{in $\R \times \Omega$}, 
\end{equation}
with given source $g \in \Schwartz(\R^{1+d})$. By construction the orbits
of the group are the characteristics curves of (\ref{eq:linear-adv-eq}) and
exist globally in time.

We define the function
\begin{equation}
  \label{eq:causal}
  f(t,x) \coloneqq \int_{-\infty}^t g\big(s, \varphi_{t-s}^{-1}(x)\big)ds, \qquad
  (t,x) \in \R \times \Omega,
\end{equation}
for which we prove the following.

\begin{proposition}
  \label{th:causal}
  Let $\varphi$ be a map satisfying properties (i)-(iv) above.
  Then, for any $g \in \Schwartz(\R^{1+d})$ the function $f$ defined in
  (\ref{eq:causal}) belongs to $C^\infty_b(\R \times \Omega)$ and is a classical
  solution of the linear advection equation (\ref{eq:linear-adv-eq}).
\end{proposition}

\begin{proof}
  First we observe that, for every $t \in \R$ the function
  $s \mapsto (1+s^2) / (1+ (t-s)^2)$ is in $C^\infty(\R)$, strictly positive,
  tends to $1$ for $s \to \pm \infty$, and for $t \not = 0$ has two critical
  points at $s = s_\pm = (t \pm (t^2+4)^{1/2})/2$ that correspond to a local
  minimum and a local maximum depending of the sign of $t$. The local maximum,
  in particular, is also the global maximum and thus
  \begin{equation*}
    (1+s^2) \leq C_t (1 + (t-s)^2),
  \end{equation*}
  uniformly in $s$. The constant $C_t$ is the value of the function at the
  maximum which is $C_t = \big[4 + t^2 + |t| (4 + t^2)^{1/2}\big]/\big[4 + t^2 -
    |t| (4 + t^2)^{1/2}\big]$.
  The trivial case $t=0$ is included with $C_t=1$. The substitutions
  $s \to s/a$ and $t \to t/a$ yield $(a^2+s^2) \leq C_{t/a} (a^2 + (t-s)^2)$,
  for all $a \not=0$. Explicitly, $C_{t/a} = (\xi + |t|)/(\xi-|t|)$ where
  $\xi = \sqrt{4a^2 + t^2}$; this is a monotonically decreasing function of
  $\xi$ in the interval $[\sqrt{4+t^2},+\infty)$ corresponding to $a\geq 1$, and
  the maximum is exactly $C_t$. Therefore, for every $a \geq 1$,
  \begin{equation}
    \label{eq:simp-est}
    (a^2+s^2) \leq C_t (a^2 + (t-s)^2),
  \end{equation}
  uniformly in $s \in\R$ and the constant is independent of $a$.
    
  We now apply this inequality to the function (\ref{eq:causal}). After the
  change of variable $s' = t-s$, we have 
  \begin{equation*}
    f(t,x) = \int_0^{+\infty} g\big(t-s', \varphi_{s'}^{-1}(x) \big) ds'.
  \end{equation*}
  We shall show that all derivatives
  $\partial_t^n \partial_x^\alpha \big[g\big(t-s', \varphi_{s'}^{-1}(x) \big)\big]$ 
  are uniformly bounded by $L^1$-functions of $s$. If this is the case, repeated
  use of the dominated convergence theorem gives $f \in C^\infty(\R \times \Omega)$ 
  with all derivative bounded as claimed.

  In the case $\alpha=0$, for any real $k \geq 0$ and for any integer $n\geq 0$,
  we have 
  \begin{align*}
    \big|\partial_t^n g\big(t-s, \varphi_s^{-1}(x)\big)\big| &=
    \frac{(1+(t-s)^2 + |\varphi_s^{-1}(x)|^2)^k
      \big|\partial_t^n g\big(t-s, \varphi_s^{-1}(x)\big)\big|}{(1+(t-s)^2 +
      |\varphi_s^{-1}(x)|^2)^k} \\ &\leq
    \frac{(1+(t-s)^2 + |\varphi_s^{-1}(x)|^2)^k
      \big|\partial_t^n g\big(t-s, \varphi_s^{-1}(x)\big)\big|}{(1+(t-s)^2)^k}
  \end{align*}
  and since $g \in \Schwartz$ inequality~(\ref{eq:polynomial-bound}) gives
  \begin{equation*}
    \big|\partial_t^n g\big(t-s, \varphi_s^{-1}(x)\big)\big|
    \leq \frac{C}{(1+(t-s)^2)^k},
  \end{equation*}
  with constant $C$ independent on $s$ and $s \mapsto (1+(t-s)^2)^{-k}$ is in
  $L^1$ for $k > 1/2$. With $n=0$, this also shows that $f(t,x)$ is bounded.

  Including spatial derivative requires condition (iv). In the
  case $|\alpha| = 1$,
  \begin{align*}
    \partial_t^n \partial_x^\alpha \big[g\big(t-s, \varphi_{s}^{-1}(x)
      \big)\big] &= \partial_x^\alpha \varphi_{s}^{-1}(x) \cdot
    (\partial_t^n \nabla g)\big(t-s, \varphi_{s}^{-1}(x) \big) \\
    & \leq d \cdot |\partial_x^\alpha \varphi_{-s}(x) |
    \cdot
    \max_j \big|(\partial_t^n \partial_{x_j} g)\big(t-s, \varphi_{s}^{-1}(x)
    \big)\big|.
  \end{align*}
  Hypothesis (iv) and the estimate~(\ref{eq:simp-est}) give, for every $k>0$,
  \begin{align*}
    |\partial_x^\alpha \varphi_{-s}(x) |
    & \leq C (1 + s^2 + |\varphi_s^{-1}(x)|^2)^m \\
    & \leq \tilde{C}_t (1 + (t-s)^2 + |\varphi_s^{-1}(x)|^2)^m \\
    & \leq \tilde{C}_t \frac{(1 + (t-s)^2 + |\varphi_s^{-1}(x)|^2)^{m+k}}
    {(1 + (t-s)^2)^k}.
  \end{align*}
  Proceeding as before, we choose $k > 1/2$ and obtain that the first-order
  derivatives are uniformly bounded by an $L^1$-function.

  The case of general $\alpha$ is complicated by the form of the chain
  rule. However, the same argument can be applied to the explicit formula for
  the multi-variate chain rule (Fa\`a di Bruno formula) which is linear in the
  derivatives of $g$ and polynomial in the derivatives of $\varphi$.

  The fact that $f$ is a classical solution of~(\ref{eq:linear-adv-eq}) can be
  checked by substitution.
\end{proof}

If the flow is polynomially bounded, we can deduce that $f$, viewed as a
distribution on $\Omega$, has finite moments at all orders. This is a
consequence of the following result.

\begin{lemma}
  \label{th:finite-moments}
  Let $\varphi$ satisfy assumptions (i)-(iv) and $g \in \Schwartz(\R^{1+d})$. In
  addition let us assume the there are $n \in\N_0$ and $C > 0$ such that
  $|\varphi_t(x)| \leq C (1 + x^2)^n$, uniformly in $\R \times \Omega$. Then,
  $p \cdot \partial_t^l \partial_x^\alpha f(t,\cdot) \in L^\infty(\Omega)$ for
  every polynomial $p = p(x)$, $l \in N_0$ and $\alpha \in \N_0^d$.
\end{lemma}

\begin{proof}
  It is enough to show that $|x|^k \partial_t^l \partial_x^\alpha f(t,x)$ is in
  $L^\infty(\Omega)$. Since $x = \varphi_s\big(\varphi_s^{-1}(x)\big)$,
  \begin{equation*}
    |x|^k \leq C^k(1+ |\varphi_s^{-1}(x)|^2)^{kn} \leq
    C^k(1+ (t-s)^2 + |\varphi_s^{-1}(x)|^2)^{kn},
  \end{equation*}
  and
  \begin{align*}
    |x|^k |f(t,x)| &\leq C^k \int_0^{+\infty} (1+(t-s)^2+|\varphi_s^{-1}(x)|^2)^{kn}
    \partial_t^l \partial_x^\alpha \big[g\big(t-s, \varphi_{s}^{-1}(x) \big)\big] ds \\
    &\leq C_{k,l,m,\alpha} \int_0^{+\infty} \frac{ds}{(1+(t-s)^2)^m},
  \end{align*}
  where the derivatives of $g$ are estimates as in proposition~\ref{th:causal}.
\end{proof}

The function~(\ref{eq:causal}) is referred to as the causal solution of
equation~(\ref{eq:linear-adv-eq}).

\section{The Hilbert transform and its action on symbols} 
\label{sec:calc-limits-proof}

The Hilbert transform is defined by
\begin{equation*}
  \mathcal{H}(\phi)(x) = \frac{1}{\pi} \pv \int \frac{\phi(y)}{x - y} dy = 
  \frac{1}{\pi} (\pv \frac{1}{x}) * \phi (x),
\end{equation*}
for a function $\phi \in C^\infty_0$. It has the following properties, which we
state without proof.   

\begin{proposition}
  \label{th:Hilbert-transform}
  The Hilbert transform $\mathcal{H}$ defined above is extended to functions
  in $\Schwartz(\R)$ through the equality
  \begin{equation*}
    \mathcal{H}(\phi)(x) = \frac{1}{\pi} \int_\R \frac{1}{2u} \big[
      \phi(x-u) - \phi(x+u) \big]du.
  \end{equation*}
  Moreover, it extends to an isometry of Sobolev spaces $H^k(\R)$ through the
  equality 
  \begin{equation*}
    \widehat{\mathcal{H}(u)} = -i \sgn(\xi) \hat{u},
    \qquad \forall u \in H^k(\R),
  \end{equation*}
  for every non-negative integer $k$. Particularly if $u \in \Schwartz(\R)$, 
  then $\mathcal{H}(u) \in H^{\infty}(\R)$.
\end{proposition}

With $\nu > 0$, $G \in \Schwartz(\R)$, and $\phi \in \Schwartz(\R^2)$ let us
consider the integrals
\begin{equation*}
  A_\nu(\omega,k) \coloneqq \int_\R \frac{G(v)}{\omega - kv + i\nu} dv,\quad
  B_\nu(v,k) \coloneqq \int_\R \frac{\phi(\omega,k)}{\omega - kv + i\nu} d\omega,
\end{equation*}
that involve the same denominator as in~(\ref{sigma-nu}).

\begin{lemma}
  \label{th:Anu-Bnu}
  Let $G \in \Schwartz(\R)$, $\phi \in \Schwartz(\R^2)$, and $\nu >0$.
  \begin{itemize}
  \item[(i)] There are constants $C_{A,j}$, $j=0,1$, depending on $G$, such that
    for $k \not = 0$,
    \begin{align*}
      \big|A_\nu(\omega,k)\big| &\leq C_{A,0} + C_{A,1}/|k|, \\
      A_\nu(\omega,k) &\xrightarrow{\nu \to 0^+}
      \frac{1}{k}\Big[\pi \mathcal{H}(G)(\omega/k) - i\pi G(\omega/k)\Big].
    \end{align*}
  \item[(ii)] For any integer $m \geq 0$ there exists a constant
    $C_B$, depending on $\phi$ and $m$, such that
    \begin{align*}
      \big|B_\nu(v,k)\big| &\leq C_{B} (1+k^2)^{-m}, \\
      B_\nu(v,k) &\xrightarrow{\nu \to 0^+}
      -\pi \mathcal{H}\big(\phi(\cdot,k)\big)(kv) - i\pi \phi(kv,k).
    \end{align*}
  \end{itemize}
\end{lemma}
\begin{proof}
  Let $G \in \mathcal{S}(\R)$ and $\phi \in \mathcal{S}(\R^2)$.
  For $k\not=0$, define the two functions $G_0$ and $G_1$ by  
  \begin{align*}
    G_0\big( \tfrac{\omega}{k},u \big) &\coloneqq \tfrac{1}{2} \big[
      G \big(\tfrac{\omega}{k} - u \big) +
      G \big(\tfrac{\omega}{k} + u \big) \big], \\
    u G_1 \big (\tfrac{\omega}{k},u \big) &\coloneqq \tfrac{1}{2} \big[
      G \big(\tfrac{\omega}{k} - u \big) -
      G \big(\tfrac{\omega}{k} + u \big) \big].
  \end{align*}
  Similarly, for all $(\omega,k,v)\in \R^3$ we define
  \begin{align*}
    \phi_0(kv, \varpi, k) &\coloneqq
    \frac{1}{2} (\phi(kv + \varpi, k)+\phi(kv-\varpi,k)), \\
    \varpi \phi_1(kv, \varpi,k) &\coloneqq
    \frac{1}{2} \Big[\phi(kv + \varpi, k)-\phi(kv - \varpi,k)\Big].
  \end{align*}
  We observe that,
  \begin{equation}
    \label{eq:G0-phi0}
    \big|G_0(\tfrac{\omega}{k},u)\big| \leq \|G\|_0, \qquad
    (1 + k^2)^m \big|\phi_0(kv,\varpi,k)\big| \leq \|\phi\|_{2m}.
  \end{equation}
  The first inequality follows directly from the definition of $G_0$, and we have
  \begin{multline*}
    (1 + k^2)^m \big|\phi_0(kv,\varpi,k)\big| \leq \frac{1}{2} \big[
      (1 + (kv+\varpi)^2 + k^2)^m \big|\phi(kv+\varpi,k)\big| \\
      + (1 + (kv-\varpi)^2 + k^2)^m \big|\phi(kv-\varpi,k)\big|\big]
    \leq \|\phi\|_{2m}.
  \end{multline*}
  Moreover,
  \begin{equation*}
    \big|G_1(\tfrac{\omega}{k},u)\big| \leq \|G\|_1, \qquad
    (1+k^2)^m \big|\phi_1(kv,\varpi,k)\big| \leq \|\phi\|_{1+2m},
  \end{equation*}
  and this can be proven by Taylor's formula and
  \begin{align*}
    \big|G_1(\tfrac{\omega}{k},u)\big| &\leq \frac{1}{2} \int_{-1}^{+1}
    \big|G'(\tfrac{\omega}{k} + \lambda u)\big| d\lambda \leq \sup |G'(v)|, \\
    (1 + k^2)^m \big|\phi_1(kv,\varpi,k)\big| &\leq \frac{1}{2} \int_{-1}^{+1}
    (1 + k^2)^m \big|\partial_\omega \phi(kv + \lambda \varpi,k)\big|d\lambda \\
    &\leq \sup \big[(1 + (kv + \lambda \varpi)^2 + k^2)^m
      \big|\partial_\omega \phi(kv + \lambda \varpi,k)\big|\big].
  \end{align*}
  When $|u| > 1$ we also have, for any $M \in \N_0$,
  \begin{align*}
    \big|G_1(\tfrac{\omega}{k},u)\big| &\leq
    \frac{1}{2}\Big[\big|G(\tfrac{\omega}{k} - u)\big| +
      \big|G(\tfrac{\omega}{k} + u)\big|\Big] \\ &\leq
    \frac{1}{2} \big[
      \tfrac{1}{(1+(\tfrac{\omega}{k} - u)^2)^M} +
      \tfrac{1}{(1+(\tfrac{\omega}{k} + u)^2)^M} \big]
    \sup \big[(1+v^2)^M |G(v)|\big],
  \end{align*}
  and, analogously, when $|\varpi| > 1$,
  \begin{multline*}
    (1+k^2)^m \big|\phi_1(kv,\varpi,k)\big| \leq
    \frac{1}{2} \big[\tfrac{1}{(1+(kv+\varpi)^2)^M} +
      \tfrac{1}{(1+(kv-\varpi)^2)^M}\big] \\
    \times  \sup \big[(1+ \omega^2 + k^2)^{M+m} \big|\phi(\omega,k)\big|\big].
  \end{multline*}
  Therefore,
  \begin{equation}
    \label{eq:G1-phi1}
    \big|G_1(\tfrac{\omega}{k},u)\big| \leq G_1^*(\tfrac{\omega}{k},u), \qquad
    (1+k^2)^m \big|\phi_1(kv,\varpi,k)\big| \leq \phi_1^*(kv,\varpi),
  \end{equation}
  where
  \begin{equation*}
    G_1^*(\tfrac{\omega}{k},u) \coloneqq
    \begin{cases}
      \|G\|_1, & |u|\leq 1, \\
      \frac{1}{2}\big[
        \tfrac{1}{(1+(\tfrac{\omega}{k} - u)^2)^M} +
        \tfrac{1}{(1+(\tfrac{\omega}{k} + u)^2)^M}\big] \|G\|_{2M}, & |u| > 1.
    \end{cases}
  \end{equation*}
  and
  \begin{equation*}
    \phi_1^*(kv,\varpi) \coloneqq
    \begin{cases}
      \|\phi\|_{1+2m}, & |\varpi| \leq 1, \\
      \frac{1}{2}\big[
        \tfrac{1}{(1+(kv - \varpi)^2)^M} +
        \tfrac{1}{(1+(kv + \varpi)^2)^M}\big]\|\phi\|_{2(M+m)}, & |\varpi| > 1.
    \end{cases}
  \end{equation*}  
  For $M \geq 1$, one has
  $G_1^*(\tfrac{\omega}{k},\cdot), \phi_1^*(kv,\cdot) \in L^1(\R)$ with
  \begin{equation}
    \label{eq:G1-phi1-L1norms}
    \begin{aligned}
      \|G_1^*(\tfrac{\omega}{k},\cdot)\|_{L^1} &\leq 2 \|G\|_1 +
      \|G\|_{2M} \int_\R \frac{dt}{(1+t^2)^M} \eqqcolon C_{M,G}, \\
      \|\phi_1^*(kv,\cdot)\|_{L^1} &\leq 2 \|\phi\|_{1+2m} + \|\phi\|_{2(M+m)}
      \int_\R \frac{dt}{(1+t^2)^M} \eqqcolon C_{M,m,\phi},
    \end{aligned}
  \end{equation}
  where $C_{M,G}$ is a constant depending only on the Schwartz semi-norms of $G$ and
  the integer $M$ and $C_{M,m,\phi}$ depends only on $M$, $m$, and the Schwartz
  semi-norms of $\phi$. 
  
  Then, one has the identities
  \begin{align*}
    A_{\nu}(\omega,k) &=
    \int_{\R} \frac{G_0(\frac{\omega}{k}, u) + 
      uG_1(\frac{\omega}{k}, u)}{ku+i\nu}du, \\
    B_{\nu}(v,k) &= \int_{\R}\frac{\phi_0(kv, \varpi, k) + 
    \varpi \phi_1(kv, \varpi, k)}{\varpi + i\nu} d\varpi.
  \end{align*}
  As $G_0,G_1$ are even functions in $u$, and $\phi_0,\phi_1$ are even
  functions in $\varpi$, one deduces 
  \begin{align*}
    kA_{\nu}(\omega,k) &= \int_{\R} \frac{k^2u^2G_1(\frac{\omega}{k}, u) - 
      ik\nu G_0(\frac{\omega}{k}, u)}{k^2u^2+\nu^2}du \\
    &= \int_{\R} \frac{k^2u^2G_1(\frac{\omega}{k}, u)}{k^2u^2+\nu^2}du - 
    ik\int_{\R} \frac{G_0(\frac{\omega}{k}, \nu t)}{k^2t^2+1}dt, \\
    B_{\nu}(v,k) &= \int_{\R}\frac{\varpi^2 \phi_1(kv, \varpi, k) -
      i\nu \phi_0(kv, \varpi, k)}{\varpi ^2+ \nu^2}d\varpi \\
    &=\int_{\R}\frac{\varpi^2 \phi_1(kv, \varpi, k)}{\varpi ^2+ \nu^2}d\varpi 
    -i\int_{\R}\frac{\phi_0(kv, \nu t, k)}{t^2+1}dt.
  \end{align*}
  We observe that $\frac{k^2u^2}{k^2u^2+\nu^2}$ and 
  $\frac{\varpi^2 }{\varpi ^2+ \nu^2}$ are uniformly bounded by $1$. 
  Moreover,  $t \mapsto \frac{1}{t^2+1}$ is in $L^1(\R)$, and, for $k\not=0$,
  $t\mapsto \frac{1}{k^2t^2+1}$ belongs to $L^1(\R)$, with the values
  $\int_{\R}\frac{dt}{t^2+1}=\pi$ and
  $\int_{\R}\frac{dt}{k^2t^2+1}=\frac{\pi}{k}$.

  Then estimates (\ref{eq:G0-phi0}), (\ref{eq:G1-phi1}), and
  (\ref{eq:G1-phi1-L1norms}) give
  \begin{align*}
    \big|k A_\nu(\omega,k)\big| &\leq \|G_1^*(\tfrac{\omega}{k},\cdot)\|_{L^1} +
    \pi |k| \|G\|_0 \leq C_{M,G} + \pi |k| \|G\|_0, \\
    (1+k^2)^m \big|B_\nu(v,k)\big| &\leq \|\phi_1^*(kv,\cdot)\|_{L^1} + \pi
    \|\phi\|_{2m} \leq C_{M,m,\phi} + \pi \|\phi\|_{2m}.
  \end{align*}
  
  As for the limits of $A_\nu$ and $B_\nu$ as $\nu \to 0^+$,
  estimate (\ref{eq:G0-phi0}) and (\ref{eq:G1-phi1}) with $M \geq 1$ imply
  that the integrands are bounded by an $L^1$ function uniformly in $\nu$.  
  Then the dominated convergence theorem allows us to conclude that,
  as $\nu \to 0^+$,
  \begin{align*}
    kA_{\nu}(\omega, k) &\rightarrow \int_{\R} G_1(\frac{\omega}{k}, u)du
    -i\pi G_0(\frac{\omega}{k}, 0), \\
    B_{\nu}(v,k) &\rightarrow  \int_{\R}\phi_1(kv, \varpi, k)d\varpi  -
    i\pi \phi_0(kv,0, k).
  \end{align*}
  Upon accounting for the definitions of $G_0, G_1$ and $\phi_0$, $\phi_1$ that
  reads
  \begin{align*}
    kA_{\nu}(\omega,k) &\rightarrow \pi \mathcal{H}\big(G)(\omega/k)
    -i\pi G_0(\omega/k, 0), \\
    B_{\nu}(v,k) &\rightarrow  
    - \pi \mathcal{H}\big(\phi(\cdot,k) \big)(kv) - i\pi \phi(kv, k),
  \end{align*}
  as claimed.
\end{proof}

\section{A useful linear algebra result}
\label{sec:matrices}

In this appendix we establish a linear algebra result which constitutes a key
step in the proof of the results in section~\ref{sec:auxiliary-pde}.

\begin{lemma}
  \label{th:matrix-Ak}
  For any integer $\ell \geq 0$, the matrices
  \begin{equation*}
    A_\ell = \begin{pmatrix}
      0           &   i            &  0             & 0        & \dots  \\
      - i \ell    &   0            &  2i            & 0        & \dots  \\
      0           & - i(\ell-1)    &  0             & 3 i      & \dots  \\
      0           &   0            & -i(\ell-2)  & 0        & \dots  \\
      \vdots      & \vdots         & \vdots         & \vdots   & \ddots
    \end{pmatrix} \in \C^{(\ell+1)\times(\ell+1)}
  \end{equation*}
  are diagonalizable with integer eigenvalues
  $\{(2s -\ell) :\; s = 0,1,\ldots, \ell\}$.
  For $z \in \C\setminus\Z$, $A_\ell - z$ is invertible and
  \begin{equation*}
    |(A_\ell - z)^{-1}|_1 \leq (\ell+1)^2 2^\ell/\delta_z,
  \end{equation*}
  where the norm $|\cdot|_1$ is induced by the $L^1$-norm on $\C^{\ell+1}$ and
  $\delta_z = \min \{|z - m|: m\in\Z\}$.
\end{lemma}

\begin{proof}
  The map $i_1 : \C^{\ell+1} \to C^\infty(\T)$, $\T = \R/(2\pi \N)$,  defined by
  \begin{equation*}
    x \mapsto U(\phi) = \sum_{r=0}^\ell x_r \big(\cos\phi\big)^{\ell-r}
    \big(\sin\phi)^r,
  \end{equation*}
  is an linear embedding of $\C^{\ell+1}$ and becomes an isomorphism when
  restricted to its range. Injectivity in particular holds since,
  if $x \in \C^{\ell+1}$ is such that $U = i_1(x) = 0$, then for any $R\geq 0$
  \begin{equation*}
    R^\ell U(\phi) = \sum_{r=0}^\ell x_r u_1^{\ell-r} u_2^r = 0,
  \end{equation*}
  uniformly for $(u_1, u_2) = (R \cos\phi, R \sin\phi) \in \R^2$.
  Since monomials $u_1^{\ell-r}u_2^r$ are linearly independent, we deduce
  $x=0$ and thus $i_1$ is injective.

  A second embedding is $i_2 : \C^{\ell+1} \to C^\infty(\T)$ with
  \begin{equation*}
    v \mapsto V(\phi) = \sum_{s=0}^{\ell} v_s e^{i(\ell-2s)\phi},
  \end{equation*}
  which is injective since the exponential are linearly independent.

  We claim that $i_1, i_2$ have the same range i.e.,
  $V_\ell \coloneqq i_1(\C^{\ell+1}) = i_2(\C^{\ell+1})$. In fact,
  $i_1(\C^{\ell+1})$ is spanned by functions 
  $f_r(\phi) = \big(\cos\phi\big)^{\ell-r} \big(\sin\phi)^r$ while
  $i_2(\C^{\ell+1})$ is spanned by $g_s(\phi) = e^{i(\ell-2s)\phi}$.
  On the one hand, the binomial formula gives
  \begin{align*}
    f_r(\phi) &= \frac{(-i)^r}{2^\ell} \big(e^{i\phi} + e^{-i\phi}\big)^{\ell-r}
    \big(e^{i\phi} - e^{-i\phi}\big)^r \\
    &= \frac{(-i)^r}{2^\ell} \sum_{m=0}^{\ell-r} \sum_{n=0}^{r} (-1)^n
    \binom{\ell-r}{m} \binom{r}{n} e^{i(\ell - 2(m+n))\phi},
  \end{align*}
  and thus
  \begin{equation*}
    f_r(\phi) = \sum_{s=0}^{\ell} T_{sr} g_s(\phi), \qquad
    T_{sr} = \frac{(-i)^r}{2^\ell}
    \sum_{(m,n)\in\Sigma(r,s)} (-1)^n \binom{\ell-r}{m} \binom{r}{n},
  \end{equation*}
  where the last sum is over the set of indices
  \begin{equation*}
    \Sigma(r,s) = \{(m,n) :\; m=0,\ldots,\ell-r,\;
    n = 0,\ldots,r,\;m+n=s\}.
  \end{equation*}
  On the other hand,
  \begin{align*}
    g_s(\phi) &= e^{i(\ell - 2s)\phi} = \big(\cos\phi + i\sin\phi\big)^{\ell-s}
    \big(\cos\phi-i\sin\phi\big)^s \\
    &= \sum_{m=0}^{\ell-s} \sum_{n=0}^{s} i^{m-n}
    \binom{\ell-s}{m} \binom{s}{n} \big(\cos\phi)^{\ell - (m+n)}
    \big(\sin\phi\big)^{m+n},
  \end{align*}  
  or
  \begin{equation*}
    g_s(\phi) = \sum_{r=0}^{\ell} S_{rs} f_r(\phi), \qquad
    S_{rs} =
    \sum_{(m,n) \in \Sigma(s,r)} i^{m-n} \binom{\ell-r}{m} \binom{r}{n}.
  \end{equation*}
  In summary, we have obtained that
  \begin{equation*}
    f_r = \sum_{s=0}^{\ell} T_{sr} g_s, \qquad
    g_s = \sum_{r=0}^{\ell} S_{rs} f_r, 
  \end{equation*}
  hence for any $\ell$, $\{f_r\}_{r=0}^\ell$ and $\{g_s\}_{s=0}^\ell$ span the
  same linear space $V_\ell$.
  The matrices $(T_{sr})$ and $(S_{sr})$ correspond to the linear operators 
  \begin{equation*}
    T = i_2^{-1} \circ i_1 : \C^{\ell+1} \to \C^{\ell+1}, \quad
    S = i_1^{-1} \circ i_2 : \C^{\ell+1} \to \C^{\ell+1}.
  \end{equation*}
  At last, let us introduce the differential operator
  \begin{equation*}
    B = i \frac{d}{d\phi} : C^\infty(\T) \to C^\infty(\T),
  \end{equation*}
  for which a direct calculation shows that
  \begin{equation*}
    B \circ i_1(x) = \sum_{r=0}^\ell (A_\ell x)_r f_r(\phi).
  \end{equation*}
  This means that $B$ can be restricted to operators from $V_\ell \to V_\ell$ and
  \begin{equation}
    \label{eq:Ak-representation}
    A_\ell = i_1^{-1} \circ B \circ i_1.
  \end{equation}
  On the other hand
  \begin{equation}
    \label{eq:Bpm-spectral}
    B \circ i_2 (v) = \sum_{s=0}^\ell (2s-\ell) v_s
    g_s(\phi),
  \end{equation}
  that is, the operator $i_2^{-1} \circ B \circ i_2$ is diagonal with
  eigenvalues $(2s-\ell)$.
  
  For equation~(\ref{eq:Ak-representation}) it follows that
  \begin{equation*}
    A_\ell = i_1^{-1} \circ B \circ i_1 = S \circ (
    i_2^{-1} \circ B \circ i_2) \circ T,
  \end{equation*}
  which shows that $A_\ell$ is diagonalizable with eigenvalues 
  $a = (2s-\ell)$ for $s = 0, \ldots \ell$.
  
  Therefore if $z \in \C \setminus \Z$, then $A_\ell - z$ is invertible and
  \begin{equation*}
    (A_\ell - z)^{-1} = S \circ (i_2^{-1} \circ (B - z)^{-1} \circ i_2)
    \circ T,
  \end{equation*}
  hence
  \begin{equation}
    \label{eq:Ak-estimate}
    |(A_\ell - z)^{-1}|_1 \leq |S|_1 \cdot
    | i_2^{-1} \circ (B - z)^{-1} \circ i_2 |_1 \cdot |T|_1.
  \end{equation}
  We claim that
  \begin{equation}
    \label{eq:Bpm}
    | i_2^{-1} \circ (B - z)^{-1} \circ i_2 |_1 \leq \delta_z^{-1}, \qquad
    |T|_1 \leq (\ell+1), \qquad
    |S|_1 \leq (\ell+1) 2^\ell.
  \end{equation}
  If this is true, then inequality~(\ref{eq:Ak-estimate}) becomes
  $|(A_\ell - z)^{-1}|_1 \leq  2^\ell (\ell+1)^2 /\delta_z$, which is the
  claimed estimate. Therefore, in order to complete the proof it is sufficient
  to show that (\ref{eq:Bpm}) holds.

  From (\ref{eq:Bpm-spectral}), one deduces the expression for the inverse,
  \begin{equation*}
    (B - z)^{-1} \circ i_2(v) = \sum_{s=0}^\ell
    \frac{v_s}{2s-\ell - z} e^{i(\ell -2s) \phi},
  \end{equation*}
  and thus, in the $L^1$-norm,
  \begin{equation*}
    |i_2^{-1} \circ (B - z)^{-1} \circ i_2(v) |_1
    = \sum_{s=0}^\ell \frac{|v_s|}{|2s-\ell - z|}.
  \end{equation*}
  The definition of $\delta_z$ implies
  \begin{equation*}
    |2s-\ell - z| \geq \delta_z = \min_{m\in\Z} |z - m|,
  \end{equation*}
  so that
  \begin{equation}
    \label{eq:B-estimate}
    |i_2^{-1} \circ (B - z)^{-1} \circ i_2(v) |_1
    \leq \frac{1}{\delta_z}  \sum_{s=0}^\ell |v_s|
    = \frac{|v|_1}{\delta_z},
  \end{equation}
  or $|i_2^{-1} \circ (B - z)^{-1} \circ i_2 |_1 \leq \delta_a^{-1}$.
  As for the linear operator $T$,
  \begin{equation*}
    |T_{sr}| \leq \frac{1}{2^\ell} \sum_{m=0}^{\ell-r} \sum_{n=0}^r
    \binom{\ell-r}{m} \binom{r}{n},
  \end{equation*}
  and for the last sum we can use the identity,
  \begin{equation*}
    (a+b)^\ell = (a+b)^{\ell-r} (a+b)^r = \sum_{m=0}^{\ell-r} \sum_{n=0}^{r}
    \binom{\ell-r}{m} \binom{r}{n} a^{\ell -m-n} b^{m+n},
  \end{equation*}
  which for $a=b=1$ and together with the previous inequality implies
  $|T_{sr}| \leq 1$; then
  \begin{equation*}
    |Tx|_1 = \sum_{s=0}^\ell \Big| \sum_{r=0}^\ell T_{sr} x_s\Big| \leq
    \sum_{s=0}^\ell \sum_{r=0}^\ell |T_{sr}| |x_s| \leq (\ell+1) |x|_1,
  \end{equation*}
  or $|T|_1 \leq (\ell+1)$. Analogously for $S$,
  \begin{equation*}
    |S_{rs}| \leq \sum_{m=0}^{\ell-s} \sum_{n=0}^s
    \binom{\ell-s}{m} \binom{s}{n} = 2^\ell,
  \end{equation*}
  and $|S|_1 \leq (\ell + 1) 2^\ell$.
\end{proof}

\subsection*{Acknowledgments} This work is a result of a long collaboration
between the authors during visits under the Erasmus program and the
collaboration agreement between Universit\'e Paris 13, Sorbonne Paris Cit\'e,
and Technische Universit\"at M\"unchen.

\bibliographystyle{amsplain}
\bibliography{PlasmaResponse_Landau_and_cyclotron}

\end{document}